\theoremstyle{plain}
\newtheorem{thm}{Theorem}[section]
\newtheorem{prop}[thm]{Proposition}
\newtheorem{cor}[thm]{Corollary}
\newtheorem{lem}[thm]{Lemma}
\theoremstyle{definition}
\newtheorem{df}{Definition}[section]
\newtheorem{exam}{Example}[section]
\newtheorem{ques}[thm]{Question}
\theoremstyle{remark}
\newtheorem{rmk}{Remark}[section]
\newtheorem*{ac}{Acknowledgements}
\newcommand{\To}{\Longrightarrow}
\newcommand{\zz}{\mathbb{Z}}
\newcommand{\rr}{\mathbb{R}}
\newcommand{\mm}{\zz_{\ge 0}}
\newcommand{\yoemph}[1]{\emph{#1}}
\newcommand{\yoomega}{\omega_{0}}
\newcommand{\yourysp}{\mathbb{U}}
\newcommand{\yourydis}{\rho}
\DeclareMathOperator{\card}{Card}
\newcommand{\yosub}{\subseteq}
\DeclareMathOperator{\yodiam}{diam}
\newcommand{\yocball}{\mathrm{B}}
\newcommand{\yochara}{\mathscr}
\newcommand{\youfin}{\yochara{N}}
\newcommand{\yofin}{\yochara{F}}
\newcommand{\yoofam}[1]{\mathbf{TEN}(#1)}
\newcommand{\yopetal}[2]{\Pi(#1, #2)}
\newcommand{\yopetalq}[1]{\Pi(#1)}
\newcommand{\yomapsco}[2]{\mathrm{G}(#1, #2)}
\newcommand{\yomaindisco}{\mathord{\vartriangle}}
\newcommand{\yoexpsp}[1]{\mathscr{K}(#1)}
\newcommand{\yoexpspml}[3]{\mathscr{K}_{#2, #3}(#1)}
\newcommand{\yoexpdis}[1]{\mathcal{HD}_{#1}}
\newcommand{\yobrasp}[2]{\langle #1 \rangle_{\mathfrak{c}(#2)}}
\newcommand{\yoseed}[2]{\Omega[#1, #2]}
\newcommand{\yoxsp}[1]{\mathbb{A}(#1)}
\newcommand{\yopsp}[1]{\mathbb{E}(#1)}
\newcommand{\yoprotobio}{\varpi}
\newcommand{\yorel}{\mathbin{\sim_{r}}}
\newcommand{\yocase}[2]{Case #1.~[#2]:}
\begin{document}

%title
\title[Urysohn universal ultrametric spaces]
{
Characterizations of 
  Urysohn universal 
ultrametric spaces
}

%author
\author[Yoshito Ishiki]
{Yoshito Ishiki}

%adress
\address[Yoshito Ishiki]
{\endgraf
Photonics Control Technology Team
\endgraf
RIKEN Center for Advanced Photonics
\endgraf
2-1 Hirasawa, Wako, Saitama 351-0198, Japan}

%email
\email{yoshito.ishiki@riken.jp}

\date{\today}
\subjclass[2020]{Primary 54E35, 
Secondary 
51F99}
\keywords{Urysohn universal ultrametric spaces}

%abstract
\begin{abstract}
In this paper, 
using the existence of infinite  equidistant subsets of  closed balls, 
we first characterize the injectivity of ultrametric spaces for  finite ultrametric spaces. 
This method 
also gives  characterizations of 
the 
 Urysohn universal ultrametric spaces. 
As an application,
we find that 
the operations of 
the Cartesian  product and 
the hyperspaces preserve 
the structures of 
the Urysohn universal ultrametric spaces. 
Namely, 
let 
$(X, d)$ 
be the 
Urysohn universal 
ultrametric space. 
Then 
 we show 
that 
$(X\times X, d\times d)$ 
is 
isometric to 
$(X, d)$, 
and 
show  that
 the hyperspace
  consisting of all non-empty compact subsets 
  of  
$(X, d)$ 
and symmetric products of $(X, d)$
are isometric to  
$(X, d)$. 
We also establish that every 
complete ultrametric space injective for 
finite ultrametric space contains 
a subspace isometric to $(X, d)$. 
\end{abstract}

%%%%%%%%%%%%%%%%%%%%%%%%%%
%%%%%%%%%%%%%%%%%%%%%%%%%%
%%%%%%%%%%%%%%%%%%%%%%%%%%
%%%%%%%%%%%%%%%%%%%%%%%%%%
%%%%%%%%%%%%%%%%%%%%%%%%%%
%%%%%%%%%%%%%%%%%%%%%%%%%%
%%%%%%%%%%%%%%%%%%%%%%%%%%
%%%%%%%%%%%%%%%%%%%%%%%%%%
%%%%%%%%%%%%%%%%%%%%%%%%%%
%%%%%%%%%%%%%%%%%%%%%%%%%%

%%%%%%%%%%%%%%%%%%%%%%%%%%%%%%%%%%%%%%%%%%%
%Start!
\maketitle
%%%%%%%%%%%%%%%%%%%%%%%%%%%%%%%%%%%%%%%%%%

\section{Introduction}\label{sec:intro}
\subsection{Backgrounds}
For a class 
$\yochara{C}$ 
of metric spaces, 
we say that a metric space $(X, d)$ is 
\yoemph{$\yochara{C}$-injective}
or 
\yoemph{injective for $\yochara{C}$}
if it is non-empty and  for every  pair 
of metric spaces 
$(Y, e)$ 
and 
$(Z, h)$ 
in 
$\yochara{C}$,  
and  for every pair of
isometric embeddings 
$\phi\colon Y\to Z$ 
and 
$f\colon Y\to X$, 
there exists an isometric embedding 
$F\colon Z\to X$ 
such that 
$F\circ \phi=f$. 
In other words, 
$(X, d)$ is $\yochara{C}$-injective if and only if 
for every metric 
space
$(Z, h)$ in 
$\yochara{C}$, and 
for 
every metric subspace 
$(Y, e)$ of 
$(Z, h)$ 
with 
$(Y, e)\in \yochara{C}$, 
every isometric embedding from 
$(Y, e)$ into $(X, d)$ can be 
extended to 
an isometric embedding 
from $(Z, h)$ into 
$(X, d)$. 
We denote by $\yofin$ the class of all finite metric spaces. 
Urysohn 
\cite{Ury1927} 
constructed a separable complete  $\yofin$-injective metric space
and proved its uniqueness up to isometry. 
This  space is nowadays called the 
\yoemph{Urysohn universal (metric) space}, 
and it has been studied in 
geometry, topology, dynamics, and model theory. 
In what follows, 
let 
$(\yourysp, \yourydis)$ denote  the Urysohn universal metric space. 
Our main subjects of this paper are
non-Archimedean analogues of 
$(\yourysp, \yourydis)$. 
For more information of 
$(\yourysp, \yourydis)$ 
and related topics, 
we refer the readers to, for instance,  
\cite{MR1843595}
\cite{MR2667917},
\cite{MR2435145}, 
\cite{MR952617}, 
\cite{MR2435148}, 
\cite{MR2051102}, 
\cite{MR1900705},
\cite{MR2277969}, 
\cite{MR3583613}, 
and 
\cite{MR4282005}.

To explain our main results and backgrounds, we prepare some notations and notions. 
A metric 
$d$ on  a set 
$X$ is 
said to be an 
\yoemph{ultrametric} if 
it satisfies the so-called strong triangle inequality 
$d(x, y)\le d(x, z)\lor d(z, y)$ 
for all 
$x, y, z\in X$, 
where 
$\lor$ 
stands for the 
maximal operator on 
$\rr$. 
We say that a set 
$R$  
is 
a
 \yoemph{range set} 
 if it is a subset of 
 $[0, \infty)$
and 
$0\in R$.
For a range set
 $R$, 
an ultrametric space 
$(X, d)$
 is 
\yoemph{$R$-valued} 
if 
$d(x, y)\in R$ 
for all 
$x, y\in R$. 
For a range set 
$R$, 
we  denote by 
$\youfin(R)$ 
the class of 
all finite 
$R$-valued ultrametric spaces. 

For a finite or   countable   range set 
$R$, 
we say that an ultrametric 
$(X, d)$
 is 
the 
\yoemph{$R$-Urysohn universal ultrametric space} 
if 
it is a separable complete 
$\youfin(R)$-injective
 $R$-valued 
ultrametric space.
Similarly to  
$(\yourysp, \yourydis)$, 
in the case where $R$ is finite or  countable, 
using the so-called back-and-forth argument, 
we can prove the uniqueness of 
the 
$R$-Urysohn universal ultrametric space up to 
isometry
(see \cite{MR2754373} and \cite{MR1843595}). 

In \cite{MR2754373}, 
Gao and Shao provided 
various
constructions of the 
$R$-Urysohn universal ultrametric spaces
for every finite or countable range set 
$R$. 
Wan \cite{MR4282005} 
established that 
the space of all non-empty compact 
$R$-valued ultrametric spaces 
equipped with the 
Gromov--Hausdorff ultrametric
is the 
$R$-Urysohn universal ultrametric space
for every finite or  countable range set 
$R$. 
In 
\cite{Ishiki2023const}, 
the author gave other constructions of 
Urysohn universal ultrametric spaces using 
mapping spaces and  spaces of 
continuous pseudo-ultrametrics.

Since for every uncountable range set 
$R$
all  
$\youfin(R)$-injective ultrametric spaces are 
non-separable 
(this is a simple corollary of 
\cite[(12) in Theorem 1.6]{MR3782290}), 
mathematicians often treat
the 
$R$-Urysohn universal ultrametric space only in  the case 
where 
$R$ 
is finite or  countable 
to guarantee the separability. 
In 
\cite{Ishiki2023Ury}, 
to expand the theory of 
Urysohn universal space into the non-separable spaces, 
for every uncountable range set 
$R$, 
 the author introduced 
the 
\yoemph{$R$-petaloid ultrametric space}, 
which 
is intended to be a standard
class of 
non-separable Urysohn universal ultrametric spaces. 
The author also proved its uniqueness up to isometry
using the back-and-forth argument together with transfinite induction
(see \cite[Theorem 1.1]{Ishiki2023Ury}  
and Theorem \ref{thm:18:petapeta}
in the present paper).
The definition and basic properties  of the petaloid spaces
will be presented  in  Section 
\ref{sec:pre}
 in this paper.

Based on  the notion of the petaloid 
ultrametric space, 
even when 
$R$ 
is uncountable, 
we introduce 
the 
$R$-Urysohn universal ultrametric space 
as the  
$R$-petaloid ultrametric space. 
Namely, 
in this paper, 
 we define the concept of 
 the 
 $R$-Urysohn universal ultrametric space as follows. 
\begin{df}\label{df:18:origin}
Let 
$R$ 
be a range set. 
When 
$R$ is finite or  countable, 
the 
\yoemph{$R$-Urysohn universal ultrametric space}  means 
 a (unique) 
 separable complete 
 $\youfin(R)$-injective 
 $R$-valued ultrametric space. 
When 
$R$ 
is uncountable, 
the 
\yoemph{$R$-Urysohn universal ultrametric space} 
 is the 
 $R$-petaloid ultrametric space. 
\end{df}

As a development of 
the theory of the 
Urysohn universal ultrametric spaces 
(and the 
 petaloid spaces), 
in the present  paper, 
we investigate 
characterizations of 
injective spaces for finite ultrametric spaces,  
and 
we prove that 
the Cartesian product 
and the hyperspaces preserve the 
structures of the Urysohn universal ultrametric spaces. 
We also establish that 
every complete  
$\youfin(R)$-injective 
ultrametric spaces contains 
a subspace isometric to 
the 
$R$-Urysohn universal ultrametric space.

\subsection{Main results}
We now explain our main results. 
We prepare some  definitions.

We denote by 
$\yoomega$ 
the set of all 
non-negative integers according to 
a
 set-theoretical notation. 
Remark that 
$\yoomega=\zz_{\ge 0}$ 
as a set. 
We sometimes  write 
$n<\yoomega$ 
if 
$n\in \yoomega$. 
Thus, 
the relation 
$\kappa\le \yoomega$ 
means that 
$\kappa\in \yoomega$ 
or 
$\kappa=\yoomega$. 

For a set 
$E$, 
we write
$\card(E)$ 
as  the 
cardinality of
 $E$.

\begin{df}\label{df:18:N}
For a cardinal 
$\kappa$,
and for a range set 
$R$, 
 we denote by 
$\youfin(R, \kappa)$ 
the class of all 
$R$-valued 
ultrametric spaces 
$(X, d)$ 
such that 
$\card(X)<\kappa$. 
Remark that if  
$\kappa=\yoomega$, 
then 
$\youfin(R, \kappa)$ 
coincides with the 
class $\youfin(R)$. 
In what follows, 
we use 
$\youfin(R, \yoomega)$ 
rather than 
$\youfin(R)$. 
\end{df}

For a metric space 
$(X, d)$, 
for 
$a\in X$, 
and  for 
$r\in (0, \infty)$, 
we denote by 
$\yocball(a, r; d)$ 
the closed ball 
centered at 
$a$ 
with  radius 
$r$.
We often simply write 
$\yocball(a, r)$
instead of 
$\yocball(a, r; d)$.

For a metric space 
$(X, d)$ 
and for 
$r\in (0, \infty)$, 
a subset 
$A$ 
of 
$X$ 
is called an 
\yoemph{$r$-equidistant set} 
if 
$d(x, y)=r$ 
for all distinct 
$x, y\in A$.

\begin{df}\label{df:18:halo}
Let 
$\kappa\le \yoomega$ 
and 
$R$ 
be a range set. 
An ultrametric space 
$(X, d)$ 
is 
said to be 
\yoemph{$(R, \kappa)$-haloed} 
if 
for every  
$a\in X$ 
and for every 
$r\in R\setminus \{0\}$, 
there exists  an 
$r$-equidistant  subset 
$A$ 
of 
$\yocball(a, r)$ 
such that 
$\kappa\le \card(A)$. 
\end{df}

\begin{df}\label{df:18:avoidant}
Let 
$\kappa\le \yoomega$ 
and 
$R$ 
be a range set. 
An  ultrametric space 
$(X, d)$ 
is 
\yoemph{$(R, \kappa)$-avoidant} 
if 
for every 
$a\in X$, 
 every 
 $r\in R\setminus \{0\}$, 
 and for every  subset  
 $A$ 
 of 
 $\yocball(a, r)$
  with 
  $\card(A)<\kappa$, 
 there exists 
 $p\in \yocball(a, r)$ 
 such that 
 $d(x, p)=r$ 
 for all
  $x\in A$.
\end{df}
The following theorem is our first main result. 
%statebody
\begin{thm}\label{thm:18:chara}
Let 
$R$ 
be a range set. 
Let 
$(X, d)$ 
be an 
$R$-valued ultrametric space. 
Then,  
for every  fixed integer 
$n\in \zz_{\ge 1}$,  
the following statements are equivalent:
\begin{enumerate}[label=\textup{(A\arabic*)}]
\item\label{item:18:main11}
The space
$(X, d)$ 
is 
$(R, n)$-haloed. 

\item \label{item:18:main12}
The space
$(X, d)$ 
is 
$(R, n)$-avoidant. 

\item \label{item:18:main13}
The space
$(X, d)$ 
is 
$\youfin(R, n+1)$-injective. 

\end{enumerate}
As a consequence, 
 the following statements are equivalent:
\begin{enumerate}[label=\textup{(B\arabic*)}]
\item\label{item:18:main21}
The space
$(X, d)$ 
is 
$(R, \yoomega)$-haloed. 

\item\label{item:18:main22} 
The space
$(X, d)$ 
is 
$(R, \yoomega)$-avoidant. 

\item\label{item:18:main23}
The space
$(X, d)$ 
is 
$\youfin(R, \yoomega)$-injective. 

\end{enumerate}
\end{thm}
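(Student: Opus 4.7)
The plan is to establish the cycle \ref{item:18:main11} $\Rightarrow$ \ref{item:18:main12} $\Rightarrow$ \ref{item:18:main13} $\Rightarrow$ \ref{item:18:main11} for each fixed integer $n \ge 1$. The equivalence of \ref{item:18:main21}--\ref{item:18:main23} will then follow because, for each centre $a$ and radius $r$, the number of open-ball equivalence classes in $\yocball(a, r)$ (introduced below) is either finite or infinite, so having at least $n$ such classes for every $n$ is equivalent to having infinitely many; the analogous reductions handle the avoidant and injective versions.

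For \ref{item:18:main11} $\Leftrightarrow$ \ref{item:18:main12}, I use the basic structure of closed balls in an ultrametric. On $\yocball(a, r)$ the relation $x \sim_{r} y \iff d(x, y) < r$ is an equivalence relation, and any two points in distinct classes lie at distance exactly $r$ because $\yocball(a, r)$ has diameter at most $r$. Thus an $r$-equidistant subset of $\yocball(a, r)$ is exactly a transversal of some family of distinct classes, while any subset $A$ of $\yocball(a, r)$ meets at most $\card(A)$ classes. Both \ref{item:18:main11} and \ref{item:18:main12} therefore reduce to the single condition that $\yocball(a, r)$ contains at least $n$ such equivalence classes for every $a \in X$ and every $r \in R \setminus \{0\}$.

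For \ref{item:18:main13} $\Rightarrow$ \ref{item:18:main11}, I fix $a \in X$ and $r \in R \setminus \{0\}$ and equip $Z = \{z_{0}, \dots, z_{n-1}\}$ with the $r$-equidistant ultrametric $d(z_{i}, z_{j}) = r$ for $i \neq j$. Setting $Y = \{z_{0}\}$ and mapping $z_{0} \mapsto a$ gives an isometric embedding of $Y$ into $X$; since $\card(Z) = n < n+1$, the injectivity hypothesis supplies an extension $F \colon Z \to X$, and the image $F(Z)$ is an $r$-equidistant subset of $\yocball(a, r)$ of size $n$.

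The main work lies in \ref{item:18:main12} $\Rightarrow$ \ref{item:18:main13}. By a standard induction on the number of new points, it suffices to extend one point at a time: given an isometric copy of $Y$ in $X$ with $\card(Y) \le n-1$ and an $R$-valued ultrametric one-point extension $Z = Y \cup \{z\}$, produce $p \in X$ with $d(p, y) = d(z, y)$ for every $y \in Y$. I pick $y_{0} \in Y$ minimizing $d(z, y_{0})$ and set $s = d(z, y_{0})$, which lies in $R \setminus \{0\}$. The strong triangle inequality forces $d(z, y) = d(y_{0}, y) \lor s$ for every $y \in Y$. Hence it suffices to find $p \in X$ with $d(p, y_{0}) = s$ and $d(p, y) = s$ for every $y \in A := Y \cap \yocball(y_{0}, s)$: for $y \in Y \setminus \yocball(y_{0}, s)$, a second application of the strong triangle inequality gives $d(p, y) = d(y_{0}, y) = d(z, y)$ automatically. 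Since $\card(A) \le \card(Y) < n$, applying \ref{item:18:main12} with centre $y_{0}$, radius $s$, and subset $A$ supplies such a $p$. I expect this to be the main obstacle: correctly choosing the anchor $y_{0}$ and recognising that the one-point extension problem reduces to a single avoidance problem in one closed ball, a reduction that rests on the ultrametric phenomenon that every triangle is isosceles with short base.
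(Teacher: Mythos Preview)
Your proof is correct and follows essentially the same route as the paper, which also establishes \ref{item:18:main12}$\Rightarrow$\ref{item:18:main13} by the identical one-point-extension argument (choose $y_{0}$ minimizing $e(z,y_{0})$, then avoid $Y\cap\yocball(y_{0},s)$) and deduces the (B)-equivalences from the observation that each (B$i$) amounts to (A$i$) holding for every $n$. The only cosmetic differences are that you prove \ref{item:18:main11}$\Leftrightarrow$\ref{item:18:main12} directly via the count of $\sim_{r}$-classes in $\yocball(a,r)$ (the paper proves only \ref{item:18:main11}$\Rightarrow$\ref{item:18:main12} and closes the cycle through \ref{item:18:main13}), and your \ref{item:18:main13}$\Rightarrow$\ref{item:18:main11} embeds the full $n$-point equidistant space in one application of injectivity rather than the paper's recursive one-point-at-a-time construction.
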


\begin{rmk}
Some of the  equivalences in 
Theorem 
\ref{thm:18:chara} 
have   been discovered. 
The equivalence between 
the conditions 
\ref{item:18:main22} 
and 
\ref{item:18:main23}
is 
 implicitly 
proven in 
\cite[Lemma 2.48]{Ishiki2023const}. 
The proof of Key Lemma  in 
\cite{MR748978} 
indicates 
the
 equivalence 
between 
the conditions 
\ref{item:18:main21}
and 
\ref{item:18:main23}. 
In \cite[Lemma 1 in \S 2]{MR1619924}, 
it is proven that 
a metric space 
$(X, d)$ 
is 
$\youfin(R, \yoomega)$-injective 
if and only if 
for every 
$r\in R$, 
for every finite 
$r$-equidistant subset  
$A$ 
of 
$X$, 
there exists a point 
$y\in X$ 
with 
$y\not\in  A$ 
such that 
$A\sqcup\{y\}$
is 
$r$-equidistant. 
This means that 
the condition
\ref{item:18:main23}
is  equivalent to the condition that 
$(X, d)$ 
is 
$(R, n)$-haloed 
for all 
$n\in \mm$
(see Lemma \ref{lem:18:3equi}). 
\end{rmk}

We next explain applications of Theorem
\ref{thm:18:chara}. 
For metric spaces 
$(X, d)$
and 
$(Y, e)$, 
we define the 
\emph{$\ell^{\infty}$-product metric  $d\times _{\infty}e$ 
on 
$X\times Y$}
 by 
$(d\times_{\infty} e)((x, y), (u, v))
=d(x, u)\lor e(y, v)$. 
For 
$p\in [1, \infty)$, 
we define the 
\yoemph{$\ell^{p}$-product 
metric 
$d\times_{p} e$} 
by 
$(d\times_{p} e)((x, y), (u, v))
=(d(x, u)^{p}+e(y, v)^{p})^{1/p}$.

First we prove that 
the Cartesian product preserves the  structure of  
the Urysohn universal ultrametric spaces. 
%statebody
\begin{thm}\label{thm:18:product}
Let 
$R$ 
be a range set. 
If
$(X, d)$ 
is 
the 
$R$-Urysohn universal 
ultrametric space, 
then 
so is 
$(X\times X, d\times_{\infty}d)$. 
In particular, 
the space 
$(X\times X, d\times_{\infty}d)$
is isometric to 
$(X, d)$. 
\end{thm}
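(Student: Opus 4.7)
The plan is to verify that $(X\times X, d\times_{\infty}d)$ satisfies the definition of the $R$-Urysohn universal ultrametric space, splitting into the countable and uncountable cases, and then invoking the appropriate uniqueness result to conclude the final isometry with $(X,d)$.

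First I would check the routine structural properties. Since the $\ell^{\infty}$-product of two ultrametrics is again an ultrametric, and since for $r,s\in R$ we have $r\lor s\in\{r,s\}\yosub R$, the space $(X\times X, d\times_{\infty}d)$ is an $R$-valued ultrametric space. Completeness of the product follows from completeness of $(X,d)$, and in the case where $R$ is finite or countable, separability of $X\times X$ follows from separability of $X$.

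For the countable case, the remaining task is to establish $\yoofamq{R}$-injectivity, and here I would apply Theorem \ref{thm:18:chara}: it suffices to show that $(X\times X, d\times_{\infty}d)$ is $(R,\yoomega)$-haloed. Given $(a,b)\in X\times X$ and $r\in R\setminus\{0\}$, the space $(X,d)$ is $(R,\yoomega)$-haloed by the same theorem applied to $(X,d)$, so there is an infinite $r$-equidistant set $A\yosub\yocball(a,r;d)$. Then $A\times\{b\}$ is $r$-equidistant in $X\times X$ since $(d\times_{\infty}d)((x,b),(y,b))=d(x,y)\lor 0=d(x,y)$, and it lies in $\yocball((a,b),r;d\times_{\infty}d)$. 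Uniqueness of the $R$-Urysohn universal ultrametric space then yields the claimed isometry with $(X,d)$.

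The main obstacle is the uncountable case, where $(X,d)$ is the $R$-petaloid ultrametric space and the target criterion is being $R$-petaloid rather than $\yoofamq{R}$-injectivity. Here I would use the definition of the petaloid space recalled in Section \ref{sec:pre}: I would construct inside $X\times X$ a family of petal-like subspaces indexed by $R$, transporting the petal structure of $(X,d)$ through the first coordinate (and using the diagonal embedding $x\mapsto (x,b_{0})$ for a chosen basepoint to witness each petal inside the product), and then verify each structural axiom of the petaloid definition for $(X\times X,d\times_{\infty}d)$. Once those axioms are confirmed, the uniqueness statement in Theorem \ref{thm:18:petapeta} gives the isometry $(X\times X, d\times_{\infty}d)\cong (X,d)$. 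The delicate step is showing that the $\ell^{\infty}$-product behaves well with the union-of-petals decomposition and that separable weight conditions on petals are preserved; this is where the bulk of the work in the uncountable case should lie.
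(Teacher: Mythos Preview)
Your countable case is essentially the paper's argument; the variant $A\times\{b\}$ in place of the paper's $E\times F$ works just as well.

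The uncountable case, however, has a genuine gap. You propose to transport the petal structure ``through the first coordinate'' via $x\mapsto(x,b_{0})$, which points toward defining the $S$-piece as something like $\yopetal{X}{S}\times\{b_{0}\}$. But then
\[
\bigcup_{S\in\yoofam{R}}\yopetal{X\times X}{S}=X\times\{b_{0}\}\neq X\times X,
\]
so property \ref{item:pr:cup} fails immediately, and no amount of care with ``separable weight conditions'' repairs this. (A minor slip: petals are indexed by $\yoofam{R}$, not by $R$.)

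The paper's choice is the full product petal
\[
\yopetal{X\times X}{S}=\yopetal{X}{S}\times\yopetal{X}{S}.
\]
With this definition, \ref{item:pr:sep} is exactly the countable case you already handled; \ref{item:pr:cap} is immediate; \ref{item:pr:cup} follows from $\yopetal{X}{S}\times\yopetal{X}{T}\yosub\yopetal{X}{S\cup T}\times\yopetal{X}{S\cup T}$; and \ref{item:pr:distance} comes from the identity
\[
(d\times_{\infty}d)\bigl((x,y),\,\yopetal{X}{S}\times\yopetal{X}{S}\bigr)=d(x,\yopetal{X}{S})\lor d(y,\yopetal{X}{S}),
\]
which lies in $\{0\}\cup(T\setminus S)$ by \ref{item:pr:distance} for $(X,d)$. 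The step you flagged as ``delicate'' turns out to be short once the correct petal is chosen; the real content is picking $\yopetal{X}{S}\times\yopetal{X}{S}$ rather than a slice.
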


In contrast,  
we observe that the Cartesian product of 
the ordinary Urysohn universal metric space 
$(\yourysp, \yourydis)$ 
is 
not isometric to 
$(\yourysp, \yourydis)$.  
 %statebody
  \begin{thm}\label{thm:18:nonury}
  For 
  any 
  $p\in [1, \infty]$, 
 the product  space 
 $(\yourysp\times \yourysp, \yourydis\times_{p}\yourydis)$
  is not 
  $\yofin$-injective. 
  In particular, 
  it is not isometric to 
  $(\yourysp, \yourydis)$. 
  \end{thm}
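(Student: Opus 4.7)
Plan: The strategy is to exhibit, for every $p\in[1,\infty]$, an explicit finite subset of $\yourysp\times\yourysp$ together with a one-point metric extension that defines a valid finite metric space but cannot be realized in the $\ell^{p}$-product; this immediately violates $\yofin$-injectivity, and since $(\yourysp,\yourydis)$ is $\yofin$-injective, the non-isometry claim follows as a corollary.

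First I would pick two points $a,b\in\yourysp$ with $\yourydis(a,b)=1$ and consider the four ``corners of a square''
\[A=(a,a),\qquad B=(b,a),\qquad C=(a,b),\qquad E=(b,b).\]
A direct computation gives their six pairwise $\ell^{p}$-product distances: $d(A,B)=d(A,C)=d(B,E)=d(C,E)=1$ and $d(A,E)=d(B,C)=2^{1/p}$ (with the convention $2^{1/\infty}=1$). I would then adjoin a fifth point $y$ at the asymmetric distances
\[d(A,y)=d(B,y)=d(C,y)=1, \qquad d(E,y)=2,\]
and verify the triangle inequalities on the resulting five-point set. This is routine since every distance lies in $[1,2]$, with the only tight inequalities being $d(E,y)\le d(E,B)+d(B,y)=2$ and its analogue through $C$.

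The crux is to show no $y=(u,v)\in\yourysp\times\yourysp$ realizes the prescribed distances. For $p\in[1,\infty)$, grouping the $p$-th powers of the $\ell^{p}$-product distances by coordinate yields the identity
\[d(A,y)^{p}+d(E,y)^{p}=\yourydis(a,u)^{p}+\yourydis(b,u)^{p}+\yourydis(a,v)^{p}+\yourydis(b,v)^{p}=d(B,y)^{p}+d(C,y)^{p},\]
so any realization forces $1+2^{p}=1+1$, which fails for every $p\ge 1$. For $p=\infty$, the max-metric inequality
\[d(E,y)=\yourydis(b,u)\lor \yourydis(b,v)\le d(B,y)\lor d(C,y)=1\]
contradicts $d(E,y)=2$. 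In either case, the extension cannot be realized, so $(\yourysp\times\yourysp,\yourydis\times_{p}\yourydis)$ is not $\yofin$-injective; since $(\yourysp,\yourydis)$ is, the two spaces are not isometric.

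The main subtlety is the choice of the asymmetric target distances $(1,1,1,2)$: a symmetric choice $(r,r,r,r)$ would satisfy both the coordinate identity and the max inequality trivially. The underlying geometric observation is that the ``square'' $\{A,B,C,E\}$ is rigid inside any $\ell^{p}$-product, in the sense that it obeys a coordinate identity (a discrete parallelogram-type law for $p<\infty$, a dominance law for $p=\infty$) that a generic $5$-point metric extension need not respect.
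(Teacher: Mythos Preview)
Your proof is correct and uses the same four-corner configuration $\{a,b\}\times\{a,b\}$ as the paper, but your obstruction argument is more direct. Where you choose one explicit extension $(1,1,1,2)$ and derive a contradiction from the single identity $d(A,y)^{p}+d(E,y)^{p}=d(B,y)^{p}+d(C,y)^{p}$ (the kernel vector $(1,-1,-1,1)$ of the underlying linear system), the paper instead considers the whole family of extensions $\mathbf{r}\in[2^{-1},1]^{4}$ and argues that the associated $4\times 4$ matrix has rank~$3$, so its image cannot contain the $4$-dimensional cube $[2^{-p},1]^{4}$. Your route is more elementary and pinpoints a specific witness; the paper's rank argument is slightly more conceptual but requires the extra step of invoking dimension. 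For $p=\infty$ the two arguments are essentially identical. One cosmetic difference: the paper scales so that all distances lie in $[2^{-1},1]$ (taking $\yourydis(s_{0},s_{1})=2^{-1/p}$), whereas you scale to $[1,2]$; both invoke the same ``all distances in $[L,2L]$'' trick to verify the triangle inequality.
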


For a metric space 
$(X, d)$ 
and for a subset of 
$X$, 
we 
 write 
$\yodiam(A)$ 
as the diameter of 
$A$. 
We denote by 
$\yoexpsp{X}$
the set of all non-empty compact subsets of 
$X$. 
For 
$m\in \zz_{\ge 2}\sqcup\{\infty\}$ 
and 
$l\in (0, \infty]$, 
we also denote by 
$\yoexpspml{X}{m}{l}$ 
the space 
of all 
$E\in \yoexpsp{X}$ 
such that 
$\card(E)\le m$
 and
  $\yodiam(E)\le l$, 
where 
in the case of  $m=l=\infty$, 
the inequalities 
$\card(E)\le \infty$ 
and 
$\yodiam(E)\le \infty$
 mean
 that 
there are no restrictions on  
$\card(E)$ 
and 
$\yodiam(E)$. 
In particular, 
$\yoexpspml{X}{\infty}{\infty}=\yoexpsp{X}$. 
For 
$E, F\in \yoexpsp{X}$, 
we define the Hausdorff distance 
$\yoexpdis{d}(E, F)$ 
by 
\[
\yoexpdis{d}(E, F)=
\max\left\{\sup_{a\in E}d(a, F), \sup_{b\in F}d(b, E)\right\}.
\] 
Notice that 
$\yoexpdis{d}$
is actually a metric on 
$\yoexpsp{X}$. 
We represent  the restricted metric 
$\yoexpdis{d}$ 
on 
$\yoexpspml{X}{m}{l}$
as the same symbol
$\yoexpdis{d}$ 
to the ambient metric. 
Remark that the space 
$(\yoexpspml{X}{m}{\infty}, \yoexpdis{d})$ 
is 
sometimes called the  
\yoemph{$m$-th symmetric product of 
$(X, d)$}. 

We next  
show that the hyperspace keeps the 
structure of 
the Urysohn universal ultrametric spaces.

%statebody
\begin{thm}\label{thm:18:hyper}
Let 
$m\in \zz_{\ge 2}\sqcup \{\infty\}$,   
$R$ 
be a range set, 
$l \in (0, \infty]$, 
and 
let 
$(X, d)$ 
be the 
$R$-Urysohn 
universal 
ultrametric  space. 
Then 
$(\yoexpspml{X}{m}{l}, \yoexpdis{d})$ 
is 
the 
$R$-Urysohn 
universal  ultrametric space. 
In particular, 
the space 
$(\yoexpspml{X}{m}{l}, \yoexpdis{d})$  
is 
isometric to 
$(X, d)$. 
\end{thm}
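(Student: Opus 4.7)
The strategy is to verify that $(\yoexpspml{X}{m}{l}, \yoexpdis{d})$ satisfies the defining axioms of the $R$-Urysohn universal ultrametric space given in Definition \ref{df:18:origin}; once verified, the uniqueness statements (via the back-and-forth argument in the countable/finite case, and via Theorem \ref{thm:18:petapeta} in the uncountable case) will yield the isometry with $(X, d)$. Concretely, the hyperspace must be shown to be (i) an $R$-valued ultrametric space, (ii) complete, (iii) separable when $R$ is countable, and (iv) either $\youfin(R, \yoomega)$-injective in the countable/finite case, or $R$-petaloid in the uncountable case.

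I would dispatch the elementary items first. The Hausdorff distance built from an ultrametric is again an ultrametric, and since every $E \in \yoexpsp{X}$ is compact, the suprema in the definition of $\yoexpdis{d}(E, F)$ are realized as $d(x, y)$ for some $x \in E$ and $y \in F$, so $R$-valuedness is inherited from $(X, d)$. Completeness of $(\yoexpsp{X}, \yoexpdis{d})$ is the classical consequence of completeness of $X$, and the subspace $\yoexpspml{X}{m}{l}$ is closed in $\yoexpsp{X}$ since $\card(\cdot) \le m$ and $\yodiam(\cdot) \le l$ both pass to Hausdorff limits. Separability when $R$ is countable follows by taking finite subsets (with diameter bounded by $l$) of a countable dense subset of $X$.

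The heart of the argument is (iv). In the countable/finite case, Theorem \ref{thm:18:chara} reduces $\youfin(R, \yoomega)$-injectivity to the $(R, \yoomega)$-haloed property. Given $E \in \yoexpspml{X}{m}{l}$ and $r \in R \setminus \{0\}$, I decompose $E$ by the equivalence relation $x \yorel y \iff d(x, y) \le r$; compactness forces only finitely many classes $E_{1}, \ldots, E_{k}$, with representatives $a_{i} \in E_{i}$ satisfying $d(a_{i}, a_{j}) > r$ for $i \ne j$. Using the $(R, \yoomega)$-haloed property of $X$ on each ball $\yocball(a_{i}, r)$ independently, I pick, for each $i$ and each $n \in \yoomega$, a point $q_{i}^{(n)} \in \yocball(a_{i}, r)$ so that $\{q_{i}^{(n)}\}_{n}$ is $r$-equidistant in $X$ for each fixed $i$. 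Setting $F_{n} = \{q_{1}^{(n)}, \ldots, q_{k}^{(n)}\}$, a short computation with the strong triangle inequality yields $\yoexpdis{d}(F_{n}, E) \le r$ (each $F_{n}$ meets each equivalence class within distance $r$, and vice versa) and $\yoexpdis{d}(F_{n}, F_{m}) = r$ for $n \ne m$ (the cross-block minima collapse to $r$ because the diagonal entries are exactly $r$ while off-diagonal ones exceed $r$), while $\card(F_{n}) \le k \le \card(E) \le m$ and $\yodiam(F_{n}) \le \yodiam(E) \le l$ place $F_{n}$ in $\yoexpspml{X}{m}{l}$. Thus $\{F_{n}\}_{n}$ is the required infinite $r$-equidistant subset of $\yocball(E, r; \yoexpdis{d})$.

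The main obstacle will be the uncountable case, where verifying $(R, \yoomega)$-injectivity alone does not suffice and the explicit petaloid structure must be recovered. My plan is to transport the petal decomposition of $X$ (as recalled in Section \ref{sec:pre}) to the hyperspace by defining each petal of $\yoexpspml{X}{m}{l}$ as the subspace consisting of those compact sets whose relevant distance values fall inside the corresponding petal-index of $X$, and then to verify the petaloid axioms for this decomposition. The technical burden lies in checking that this indexing is genuinely preserved by the hyperspace construction and that each proposed petal remains complete with the correct intrinsic structure; once that is done, Theorem \ref{thm:18:petapeta} supplies the isometry with $(X, d)$.
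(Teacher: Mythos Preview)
Your treatment of the countable case is correct and matches the paper's strategy; your haloed construction (replacing each equivalence class of $E$ by a single $r$-equidistant point) is a harmless variant of the paper's Proposition~\ref{prop:18:powpow}, which instead replaces one ball $\yocball(x,r)\cap E$ by a single point and leaves the rest of $E$ intact.

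The uncountable case, however, has a genuine gap. Your proposed petal---``compact sets whose relevant distance values fall inside the corresponding petal-index''---reads as $\{E\in\yoexpspml{X}{m}{l}\mid d(E^{2})\subseteq S\}$, and this cannot work: every singleton $\{x\}$ has $d(\{x\}^{2})=\{0\}\subseteq S$, so each petal would contain an isometric copy of the non-separable space $X$, violating \ref{item:pr:sep}. The paper's petal is instead
\[
\yopetal{\yoexpspml{X}{m}{l}}{S}=\{\,E\in\yoexpspml{X}{m}{l}\mid E\subseteq\yopetal{X}{S}\,\},
\]
and with this definition \ref{item:pr:sep}--\ref{item:pr:cap} are easy but \ref{item:pr:distance} is not. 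Given $E\subseteq\yopetal{X}{T}$, one must produce $V\subseteq\yopetal{X}{S}$ with $\card(V)\le m$, $\yodiam(V)\le l$, and $\yoexpdis{d}(E,V)$ equal to $h=\max_{a\in E}d(a,\yopetal{X}{S})\in\{0\}\cup(T\setminus S)$. The paper does this in two steps: first replace $E$ by a finite maximal $h$-net $Q\subseteq E$ (so $\yoexpdis{d}(E,Q)\le h$), then push $Q$ into $\yopetal{X}{S}$ via a $1$-Lipschitz retract (Lemma~\ref{lem:finalkey}, built from nearest-point sets and going back to Artico--Moresco), which simultaneously controls cardinality, diameter, and Hausdorff distance. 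This retract is the missing idea in your outline; without it there is no mechanism for bounding $\yodiam(V)$ and $\card(V)$ while achieving the exact value $h$.
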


We also establish   that  for every range set 
$R$, 
every complete 
$\youfin(R)$-injective 
ultrametric space 
contains a subspace isometric to 
the 
$R$-Urysohn universal ultrametric space. 
\begin{thm}\label{thm:18:ram}
Let 
$R$
 be an uncountable  range set. 
If 
 $(X, d)$ 
 is  a complete 
$\youfin(R, \yoomega)$-injective 
ultrametric space, 
then 
$(X, d)$ 
contains a subspace 
$F$ 
isometric to 
the 
$R$-Urysohn universal ultrametric space. 
Moreover, 
if
 $K$ 
 is 
a compact  subset  of 
$X$ 
satisfying  
$d(K^{2})\yosub R$, 
then 
we can 
choose 
$F$ 
so that 
$K\yosub  F$. 
\end{thm}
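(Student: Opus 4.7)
The strategy is to construct an isometric copy of the $R$-petaloid ultrametric space $(P, p)$ inside $X$ by transfinite recursion. The key technical input is Theorem \ref{thm:18:chara}: since $(X, d)$ is $\youfin(R, \yoomega)$-injective, it is both $(R, \yoomega)$-haloed and $(R, \yoomega)$-avoidant. Combined with completeness of $X$, these properties supply enough room to imitate arbitrary one-point extensions of $R$-valued ultrametric spaces inside $X$.

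First I would fix an isometric embedding of $K$ into $P$ (which exists since $K$ is a compact $R$-valued ultrametric space and $P$ is complete and universal for such spaces), identifying $K$ with its image $K^{*} \yosub P$. Then I would choose an increasing transfinite chain $\{P_{\alpha}\}_{\alpha < \lambda}$ of subsets of $P$ with $P_{0} = K^{*}$, each $P_{\alpha + 1} = P_{\alpha} \cup \{x_{\alpha}\}$ a one-point extension, $P_{\alpha} = \bigcup_{\beta < \alpha} P_{\beta}$ at limit $\alpha$, and $\bigcup_{\alpha < \lambda} P_{\alpha}$ dense in $P$. The enumeration is arranged to respect the petal decomposition of $P$ described in \cite{Ishiki2023Ury}, so that at each successor step only finitely many predecessors realize the minimal distance to the newly added point.

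Next, I would define an isometric map $\phi \colon \bigcup_{\alpha} P_{\alpha} \to X$ recursively: $\phi|_{K^{*}}$ is the identification of $K^{*}$ with $K \yosub X$, and at a successor step, with $\phi$ already defined isometrically on $P_{\alpha}$, I produce $y_{\alpha} = \phi(x_{\alpha})$ as follows. Set $r_{\alpha} = \inf_{z \in P_{\alpha}} p(x_{\alpha}, z)$. If $r_{\alpha} = 0$, completeness of $X$ yields $y_{\alpha}$ as the limit of a Cauchy sequence $\phi(z_{n})$ with $p(x_{\alpha}, z_{n}) \to 0$. If $r_{\alpha} > 0$, pick $z_{0} \in P_{\alpha}$ realizing this infimum; by the strong triangle inequality, the only nontrivial constraints on $y_{\alpha}$ are $d(y_{\alpha}, \phi(z)) = r_{\alpha}$ for the finitely many $z \in P_{\alpha}$ with $p(x_{\alpha}, z) = r_{\alpha}$, since any $z$ with $p(x_{\alpha}, z) > r_{\alpha}$ automatically satisfies $p(z_{0}, z) = p(x_{\alpha}, z)$. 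The $(R, \yoomega)$-avoidant property then supplies such a $y_{\alpha}$ inside the ball $\yocball(\phi(z_{0}), r_{\alpha}; d)$, and at limit stages $\phi$ is defined by taking unions. Letting $F$ be the closure in $X$ of $\phi(\bigcup_{\alpha} P_{\alpha})$ gives, by completeness and density, an isometric copy of $P$ in $X$ containing $K$.

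The main obstacle is arranging the enumeration $\{x_{\alpha}\}$ so that at every successor stage only finitely many predecessors realize the minimal distance $r_{\alpha}$. This is essential because the $(R, \yoomega)$-avoidant property guarantees an avoiding point only against finite sets. The existence of such an enumeration is a combinatorial feature of the petal structure of $P$ from \cite{Ishiki2023Ury}: each petal $\yopetalq{r}$ is countable and admits a presentation by successive one-point extensions in which interactions at the critical scale are finite. Verifying compatibility of this presentation with the starting set $K^{*}$, and that the resulting $F$ matches the petaloid via the uniqueness theorem \cite[Theorem 1.1]{Ishiki2023Ury}, is the most delicate part of the argument.
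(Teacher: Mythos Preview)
Your approach is genuinely different from the paper's, and the obstacle you flag is not merely ``delicate'' --- it is a real gap that your sketch does not close. Concretely: suppose at some limit stage $P_{\alpha}$ already contains an infinite $r$-equidistant set $\{z_{n}\}_{n\in\zz_{\ge 0}}$ (say the functions $f_{n}\in\yomapsco{R}{\yoomega}$ with $f_{n}(r)=2n$ and $f_{n}(s)=0$ otherwise), and the next point $x_{\alpha}$ lies in $\yocball(z_{0},r)$ but in an open $<r$-ball disjoint from every $z_{n}$ (e.g.\ $f$ with $f(r)=1$). Then $r_{\alpha}=r$ and infinitely many open $<r$-balls in $\yocball(\phi(z_{0}),r;d)$ are occupied by the images $\phi(z_{n})$. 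The $(R,\yoomega)$-avoidant property gives you a point avoiding any \emph{finite} collection, but nothing prevents $X$ from having only countably many open $<r$-balls inside $\yocball(\phi(z_{0}),r;d)$, all of which your earlier choices may have filled. Arranging the transfinite enumeration so that this never happens is not a feature you can simply read off the petal decomposition; it would require essentially rebuilding the tree-like presentation of $\yomapsco{R}{\yoomega}$ and carrying it through the recursion, and you have not done so.

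The paper avoids this problem entirely by working bottom-up inside $X$ rather than top-down from $P$. It fixes a basepoint $\yoprotobio\in K$ and an \emph{$R$-seed}: for every $a\in X$ and $r\in R\setminus\{0\}$ a countable $r$-equidistant set $\yoseed{a}{r}\subseteq\yocball(a,r)$ containing $a$ (chosen so that $\yoseed{a}{r}\cap K$ is a maximal $r$-equidistant subset of $K$ when $a\in K$). For each $S\in\yoofam{R}$ it then defines the \emph{$S$-heirs} of $\yoprotobio$ as points reachable by finite paths $\yoprotobio=v_{0},v_{1},\dots,v_{m}$ with $v_{i+1}\in\yoseed{v_{i}}{r_{i}}$ and $r_{0}>r_{1}>\cdots$ in $S$, and sets $\yopetal{F}{S}$ to be the closure of the set of $S$-heirs. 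The petal axioms are then verified directly from the combinatorics of these finite paths (Proposition~\ref{prop:18:eqeq} and Lemmas~\ref{lem:18:ESiso}--\ref{lem:18:TS}), and $K\subseteq F$ follows from the maximality condition on the seed. No infinite avoidance problem ever arises, because each heir is produced by a finite path rather than by extending an uncountable predecessor set.
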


\begin{rmk}
Theorem 
\ref{thm:18:ram}
in the case where 
 $R$ 
 is 
finite or  countable
and 
an analogue of 
Theorem 
\ref{thm:18:ram}
for 
$(\yourysp, \yourydis)$
 can be proven  by 
 the
back-and-forth argument. 
\end{rmk}

This paper is organized  as follows. 
In Section 
\ref{sec:pre}, 
we prepare basic notations and notions. 
We will explain  the petaloid ultrametric space. 
In Section 
\ref{sec:chara}, 
 we characterize the
  $\youfin(R)$-injectivity 
using the existence of 
infinite  equidistant subsets of 
closed balls
(see Theorem \ref{thm:18:chara}). 
We divide the proof of Theorem 
\ref{thm:18:chara}
into three parts 
(Lemmas \ref{lem:A1ToA2}, 
\ref{lem:A2ToA3}, 
and 
\ref{lem:A3ToA1}). 
As a consequence, 
we prove Theorem 
\ref{thm:18:product} on 
the product of Urysohn universal ultrametric spaces. 
In Section 
\ref{sec:hyper},
we clarify the metric structures
 of  
 the hyperspace of compact subsets 
 and symmetric products  of 
 the 
 $R$-Urysohn universal ultrametric space
(see Theorem \ref{thm:18:hyper}). 
Section
 \ref{sec:existence} is devoted to 
proving that 
all 
complete
$\youfin(R)$-injective  
ultrametric spaces  contain the 
$R$-Urysohn universal ultrametric space
(see Theorem \ref{thm:18:ram}). 
Section 
\ref{sec:ques}
 presents some questions on the isometry problems of 
 the product and hyperspaces of 
 universal metric spaces
 such as the Urysohn universal
 metric  space and the random graph.

\section{Preliminaries}\label{sec:pre}
\subsection{Generalities}

The proof of the  next lemma
 is  presented 
 in 
\cite[Propositions
18.2, 
 18.4 and 18.5]{MR2444734}. 
\begin{lem}\label{lem:ultraopcl}
Let 
$(X, d)$ 
be an ultrametric space. 
Then the following are true:
\begin{enumerate}[label=\textup{(\arabic*)}]

\item\label{item:18:ball0}
For every triple 
$x, y, z\in X$, 
the inequality 
$d(x, z)<d(z, y)$ 
implies 
$d(z, y)=d(x, y)$.

\item\label{item:18:ball1}
For every 
$a\in X$, 
for 
every 
$r\in (0, \infty)$, 
and for every 
$q\in B(a, r)$ 
we have 
$\yocball(a, r)=\yocball(q, r)$.

\item\label{item:18:ball2}
For every pair 
 $a, b\in X$,
 and for every pair 
   $r, l\in (0, \infty)$, 
if 
$\yocball(a, r)\cap \yocball(b, l)\neq \emptyset$, 
then 
we have 
$\yocball(a, r)\yosub \yocball(b, l)$ 
or 
$\yocball(b, l)\yosub \yocball(a, r)$. 

\end{enumerate}
\end{lem}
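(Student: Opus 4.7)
The plan is to verify each of the three statements by direct manipulation of the strong triangle inequality; this lemma is the standard ``all triangles are isosceles'' / ``every point is a center'' package, so no genuine obstacle is expected, and the proof will be short.

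For \ref{item:18:ball0}, I would apply the strong triangle inequality twice. The inequality $d(x,y) \le d(x,z) \lor d(z,y)$ together with the hypothesis $d(x,z) < d(z,y)$ yields $d(x,y) \le d(z,y)$. Conversely, $d(z,y) \le d(z,x) \lor d(x,y)$, and since $d(z,x) = d(x,z) < d(z,y)$, the maximum on the right cannot be $d(z,x)$, so it must be $d(x,y)$, giving $d(z,y) \le d(x,y)$. Combining the two inequalities gives the equality.

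For \ref{item:18:ball1}, given $q \in \yocball(a, r)$ (so $d(a, q) \le r$), I would take an arbitrary $x \in \yocball(a, r)$ and apply the strong triangle inequality to obtain
\[
d(x, q) \le d(x, a) \lor d(a, q) \le r,
\]
which shows $\yocball(a, r) \yosub \yocball(q, r)$. The reverse inclusion follows by the symmetric argument, swapping the roles of $a$ and $q$ (noting $d(q, a) = d(a, q) \le r$ means $a \in \yocball(q, r)$, so the first direction applies).

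For \ref{item:18:ball2}, I would choose any point $c$ in the (assumed non-empty) intersection $\yocball(a, r) \cap \yocball(b, l)$. Part \ref{item:18:ball1} then recenters both balls at $c$, giving $\yocball(a, r) = \yocball(c, r)$ and $\yocball(b, l) = \yocball(c, l)$. Since two closed balls centered at the same point are automatically nested according to their radii, the smaller of $r$ and $l$ yields the desired inclusion. As noted, there is no substantive difficulty; the only care needed is to invoke \ref{item:18:ball1} to reduce \ref{item:18:ball2} to the trivial nesting of concentric balls, rather than trying to prove \ref{item:18:ball2} directly.
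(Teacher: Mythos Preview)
Your proof is correct and is the standard direct argument from the strong triangle inequality. The paper does not actually prove this lemma in the text but merely cites \cite[Propositions 18.2, 18.4 and 18.5]{MR2444734}; your write-up supplies exactly the elementary verification that reference contains, so there is nothing further to compare.
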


\subsection{Urysohn universal ultrametric spaces}
A subset 
$E$ 
of 
$[0, \infty)$ 
is said to be
\yoemph{semi-sporadic} 
if 
there exists a strictly decreasing sequence 
$\{a_{i}\}_{i\in \zz_{\ge 0}}$ 
in 
$(0, \infty)$ 
such that 
$\lim_{i\to \infty}a_{i}=0$ 
and 
$E=\{0\}\cup \{\, a_{i}\mid i\in \zz_{\ge 0}\, \}$. 
A subset of 
$[0, \infty)$ 
is 
said to be
\yoemph{tenuous} 
if it is finite or semi-sporadic
(see 
\cite{Ishiki2023const}). 
For a range set 
$R$, 
we denote by 
$\yoofam{R}$
the set of all tenuous 
range subsets of 
$R$. 
For a metric space 
$(X, d)$, 
and for a subset 
$A$ 
of 
$X$, 
we write 
$d(x, A)=\inf_{a\in A}d(x, a)$. 
In this paper, 
we often represent a restricted metric 
as the same symbol to the ambient one. 
We now provide the definition of 
the petaloid ultrametric space that is introduced in 
\cite{Ishiki2023Ury} by the author.

\begin{df}\label{df:18:petaloid}
Let 
$R$ 
be an uncountable range set.  
We say that a metric space  
$(X, d)$ 
is 
\yoemph{$R$-petaloid} 
if 
it is an 
$R$-valued ultrametric space 
and 
there exists a family 
$\{\yopetal{X}{S}\}_{S\in \yoofam{R}}$
of subspaces of 
$X$
satisfying the following properties:
\begin{enumerate}[label=\textup{(P\arabic*)}]

\item\label{item:pr:sep}
If 
$S\in \yoofam{R}$ 
satisfies 
$S\neq \{0\}$, 
then 
 $(\yopetal{X}{S}, d)$
 is 
isometric to  the 
$S$-Urysohn 
universal ultrametric space. 
Namely, 
$(\yopetal{X}{S}, d)$
is a
separable  complete 
$\youfin(S)$-injective 
$S$-valued ultrametric space.

\item\label{item:pr:cup}
We have 
$\bigcup_{S\in \yoofam{R}}\yopetal{X}{S}=X$.

\item\label{item:pr:cap}
If 
$S,  T\in \yoofam{R}$, 
then
$\yopetal{X}{S}\cap \yopetal{X}{T}=\yopetal{X}{S\cap T}$.

\item\label{item:pr:distance}
If 
$S, T\in \yoofam{R}$ 
and 
$x\in \yopetal{X}{T}$, 
then
$d(x, \yopetal{X}{S})$ 
belongs to 
$\{0\}\cup (T\setminus S)$.

\end{enumerate}
We call the family 
$\{\yopetal{X}{S}\}_{S\in \yoofam{R}}$ 
an 
\yoemph{$R$-petal of $X$}, 
and call
$\yopetal{X}{S}$ 
 the 
 $S$-piece 
 of the 
 $R$-petal 
$\{\yopetal{X}{S}\}_{S\in \yoofam{R}}$. 
We simply write  
$\yopetalq{S}=\yopetal{X}{S}$
when the whole  space  is clear by the context. 
\end{df}

\begin{rmk}
For every uncountable range set 
$R$, 
and for  every  
$R$-petaloid space 
$(X, d)$, 
the petal
 $\yopetal{X}{\{0\}}$ 
 is 
a singleton. 
\end{rmk}

%statebody
\begin{lem}\label{lem:18:p6}
Let 
$R$ 
be a range set, 
and 
$(X, d)$ 
be the 
$R$-petaloid ultrametric space. 
Then 
for every 
$S\in \yoofam{R}$,  
and 
for every 
$x\in X$, 
there exists 
$p\in \yopetal{X}{S}$ 
such that 
$d(x, \yopetal{X}{S})=d(x, p)$. 
\end{lem}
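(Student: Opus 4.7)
The plan is to leverage the tenuous structure of $S$ together with the ultrametric isoceles property to show that distances from $x$ to points of $\yopetal{X}{S}$ take values that are bounded away from the infimum $r := d(x, \yopetal{X}{S})$, except $r$ itself. First I fix $T \in \yoofam{R}$ with $x \in \yopetal{X}{T}$ (available by property \ref{item:pr:cup}), and apply property \ref{item:pr:distance} to conclude that $r \in \{0\} \cup (T \setminus S)$.

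In the case $r = 0$, I observe that $\yopetal{X}{S}$ is closed in $X$: if $S \neq \{0\}$, property \ref{item:pr:sep} makes it a complete subspace, hence closed in the ambient ultrametric space; and if $S = \{0\}$, it is a singleton by the remark following Definition \ref{df:18:petaloid}. Consequently $x \in \yopetal{X}{S}$ and we may take $p = x$.

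The main content lies in the case $r > 0$. Here I claim that for every $a \in \yopetal{X}{S}$ with $d(x, a) > r$, the value $d(x, a)$ must belong to $S$. To see this, since $r$ is the infimum and $r < d(x, a)$, there exists $b \in \yopetal{X}{S}$ with $d(x, b) < d(x, a)$; applying Lemma \ref{lem:ultraopcl}\ref{item:18:ball0} to the triple $x, b, a$ yields $d(a, b) = d(x, a)$, and since $\yopetal{X}{S}$ is $S$-valued by \ref{item:pr:sep}, we obtain $d(x, a) \in S$. Therefore the set of values $\{d(x, a) : a \in \yopetal{X}{S}\}$ is contained in $\{r\} \cup (S \cap (r, \infty))$.

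To finish, I exploit tenuousness: since $S$ is finite or semi-sporadic, the set $S \cap (r, \infty)$ is finite (in the semi-sporadic case because the defining sequence $\{a_i\}$ tends to $0$, only finitely many of its terms exceed $r$). If this set is empty, every $a \in \yopetal{X}{S}$ already satisfies $d(x,a) = r$, and any $p$ works. Otherwise let $r' = \min (S \cap (r, \infty)) > r$; any sequence $a_n \in \yopetal{X}{S}$ with $d(x, a_n) \to r$ must satisfy $d(x, a_n) < r'$ eventually, so $d(x, a_n) = r$ for large $n$, and such an $a_n$ serves as the desired $p$. The main obstacle is the step showing $d(x, a) \in S$ whenever $d(x, a) > r$; once this is established, the gap produced by tenuousness closes the argument immediately.
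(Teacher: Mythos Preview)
Your argument is correct. It differs from the paper's proof, which simply invokes that $(\yopetal{X}{S}, d)$ is the $S$-Urysohn universal ultrametric space and then cites external results (Propositions~20.2 and~21.1 of \cite{MR2444734}) asserting, in effect, that such a space is spherically complete, so nearest points always exist. By contrast, you give a self-contained argument inside the petaloid framework: you use \ref{item:pr:cup} to place $x$ in a petal $\yopetal{X}{T}$, use \ref{item:pr:distance} to identify $r$, and then the crucial observation that any value $d(x,a)>r$ with $a\in\yopetal{X}{S}$ must coincide with some $d(a,b)\in S$ via the isosceles law, so the value set is trapped in $\{r\}\cup\bigl(S\cap(r,\infty)\bigr)$. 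Tenuousness of $S$ then produces a gap above $r$ and forces the infimum to be attained. Your route avoids any appeal to spherical completeness or to the cited monograph, at the cost of using \ref{item:pr:distance} and the tenuous structure of $S$ explicitly; the paper's route is shorter but relies on an outside structural fact about the $S$-Urysohn space. Both are valid, and your version has the advantage of being entirely internal to the paper.
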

%proof
\begin{proof}
The lemma is  deduced from  
the property that 
$\yopetal{X}{S}$ 
is the 
$S$-Urysohn 
universal ultrametric space
(see the property \ref{item:pr:sep} 
and 
\cite[Propositions  20.2 and  21.1]{MR2444734}). 
\end{proof}

%statebody
\begin{thm}\label{thm:18:petapeta}
Let 
$R$ 
be 
an 
uncountable range set. 
The following statements hold:
\begin{enumerate}[label=\textup{(\arabic*)}]

\item\label{item:uni}
There exists an 
$R$-petaloid 
ultrametric space
and it is unique up to isometry.

\item 
The 
$R$-petaloid 
ultrametric space is complete, 
non-separable, 
and 
$\yofin(R)$-injective.

\item 
Every separable
 $R$-valued 
 ultrametric space 
can be isometrically embedded into 
the 
$R$-petaloid ultrametric space. 
\end{enumerate}
\end{thm}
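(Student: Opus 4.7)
The plan is to handle the three items in turn, exploiting the defining properties \ref{item:pr:sep}--\ref{item:pr:distance} together with the known theory of the $S$-Urysohn universal ultrametric space for $S\in \yoofam{R}$. For (1), the existence and uniqueness statement is essentially \cite[Theorem 1.1]{Ishiki2023Ury}, so I would either invoke that result directly or reprove it by first constructing a concrete model through amalgamation---gluing chosen copies of the $S$-Urysohn universal ultrametric spaces (one for each $S\in \yoofam{R}$) along their common sub-pieces indexed by intersections $S\cap T$, with inter-piece distances arranged so that \ref{item:pr:distance} is forced to hold---and then proving uniqueness by a back-and-forth argument along a transfinite enumeration of $\yoofam{R}$, using at each step the separability and $\youfin(S)$-injectivity of the corresponding piece supplied by \ref{item:pr:sep}. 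The main obstacle is keeping the piecewise partial isometries compatible across the intersections $\yopetal{X}{S\cap T}$, which requires careful bookkeeping at every successor and limit stage of the induction.

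For (2), the $\youfin(R)$-injectivity is the cleanest: a finite ultrametric space in $\youfin(R)$ has only finitely many nonzero distances, so its distance set lies in some $S\in \yoofam{R}$, and the required extension is carried out inside $\yopetal{X}{S}$ using the $\youfin(S)$-injectivity given by \ref{item:pr:sep}. Non-separability is obtained by selecting uncountably many $r_{\alpha}\in R\setminus \{0\}$ and, via \ref{item:pr:sep} applied across distinct pieces, extracting an uncountable discrete subset whose pairwise distances realise the $r_{\alpha}$. Completeness is deduced from the observation that a Cauchy sequence in $X$ has its set of pairwise distances forming a tenuous subset $S\in \yoofam{R}$; using \ref{item:pr:distance} together with Lemma \ref{lem:18:p6}, the tail can be relocated onto the separable complete piece $\yopetal{X}{S}$, in which the limit already exists.

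For (3), given a separable $R$-valued ultrametric space $(Y, e)$ with countable dense subset $\{y_{n}\}_{n\in \yoomega}$, I would inductively build an isometric embedding $f\colon \{y_{n}\}_{n\in \yoomega}\to X$ by extending, at stage $n$, the partial isometry already defined on $\{y_{0}, \dots, y_{n-1}\}$ to $y_{n}$ using the $\youfin(R)$-injectivity established in (2). The completeness of $X$ combined with the uniform continuity of $f$ on the dense set then extends $f$ isometrically to all of $Y$. The subtle point here is that the full distance set of $Y$ need not itself be tenuous, but each finite-step extension only sees a finite (hence tenuous) distance set, so the argument never leaves the regime covered by \ref{item:pr:sep}.
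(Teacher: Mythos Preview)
Your proposal is essentially correct and matches the paper's approach, which simply cites \cite[Theorem 1.1, Propositions 2.9 and 2.10]{Ishiki2023Ury} for items (1) and (2), notes non-separability via \cite[(12) in Theorem 1.6]{MR3782290} together with the fact that $\youfin(R)$-injectivity forces $d(X\times X)=R$, and sketches the induction for (3) exactly as you do.

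One small gap is worth flagging in your injectivity sketch. You say the extension is ``carried out inside $\yopetal{X}{S}$'' where $S$ is the distance set of the finite space $(Z,h)$; but the given partial embedding $f\colon Y\to X$ need not land in $\yopetal{X}{S}$. The repair is easy: by \ref{item:pr:cup} each $f(y)$ lies in some $\yopetal{X}{T_{y}}$ with $T_{y}\in\yoofam{R}$, and since a finite union of tenuous sets is tenuous you may replace $S$ by $S'=S\cup\bigcup_{y\in Y}T_{y}\in\yoofam{R}$ and extend inside $\yopetal{X}{S'}$ using \ref{item:pr:sep}. Similarly, your completeness sketch is right in spirit---the pairwise distance set $D$ of a Cauchy sequence is indeed tenuous---but ``relocating the tail onto $\yopetal{X}{S}$'' needs the same enlargement trick: take $S=T_{N}\cup D$ for some fixed index $N$, so that $x_{N}\in\yopetal{X}{S}$, and then \ref{item:pr:distance} forces $d(x_{m},\yopetal{X}{S})\notin D$, hence $d(x_{m},\yopetal{X}{S})<d(x_{m},x_{N})$, which goes to $0$ as $N\to\infty$ along the Cauchy condition.
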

\begin{proof}
The first statement  is due 
to the author's result
\cite[Theorem 1.1]{Ishiki2023Ury}.
In the second statement, 
the 
$\youfin(R)$-injectivity 
and 
the completeness are deduced  from 
\cite[Propositions 2.9 and  2.10]{Ishiki2023Ury}. 
The non-separability is follows from 
\cite[(12) in Theorem 1.6]{MR3782290} 
and 
the fact 
that if 
$(X, d)$ 
is 
$\youfin(R)$-injective, 
then 
$d(X\times X)=R$
(see also Section 
\ref{sec:intro}).
All complete 
$\youfin(R)$-injective 
ultrametric spaces 
have the property stated in 
the third statement, 
which can be shown 
by induction. 
\end{proof}

Recall that 
$\yoomega=\zz_{\ge 0}$ 
as a set.

\begin{exam}\label{exam:gsp}
We give an example of 
the 
$R$-petaloid space. 
Let 
$R$ be a range set. 
We also denote by 
$\yomapsco{R}{\yoomega}$ 
the set of all function 
$f\colon R\to \yoomega$ 
such that 
$f(0)=0$ 
and 
the set 
$\{0\}\cup \{\, x\in R \mid f(x)\neq 0\, \}$ 
is tenuous. 
For 
$f, g\in \yomapsco{R}{\yoomega}$, 
we define an 
$R$-valued ultrametric 
$\yomaindisco$ 
on 
$\yomapsco{R}{\yoomega}$ 
by 
$\yomaindisco(f, g)=
\max\{\, r\in R\mid f(r)\neq g(r)\, \}$ 
if 
$f\neq g$; 
otherwise, 
$\yomaindisco(f, g)=0$. 
Then the space 
$(\yomapsco{R}{\yoomega}, \yomaindisco)$
is  
$R$-petaloid, 
and hence all 
$R$-petaloid 
ultrametric spaces are isometric to 
$(\yomapsco{R}{\yoomega}, \yomaindisco)$
(see 
\cite[Theorem 1.3]{Ishiki2023Ury}). 
For more information of this construction, 
we refer the readers to 
\cite{MR2754373}, 
\cite{MR2667917}, 
and \cite{delhomme2015homogeneous}. 
\end{exam}

\begin{rmk}
Even 
if 
$R$ 
is finite or countable, 
the 
$R$-Urysohn universal ultrametric space 
satisfies all of 
 the properties
\ref{item:pr:sep}--\ref{item:pr:distance}
  in the definition of the 
  $R$-petaloid space
  (Definiton \ref{df:18:petaloid}). 
Thus, 
the concept of the petaloid spaces can be
naturally considered as a generalization of 
separable Urysohn universal ultrametric spaces. 
\end{rmk}

%section 
\section{Characterizations of injectivity}\label{sec:chara}
In this section, 
we provide characterizations of 
the injectivity  for finite ultrametric spaces.

%statebody
 \begin{lem}\label{lem:18:3equi}
Let 
 $R$ 
 be a range set. 
 Then the following statements are true:
 \begin{enumerate}[label=\textup{(\arabic*)}]

 \item\label{item:1:ninf}
 An ultrametric  space
$(X, d)$ 
is 
$(R, \yoomega)$-haloed 
if and only if 
$(X, d)$ 
is 
$(R, n)$-haloed 
for all 
$n\in \zz_{\ge 1}$.

\item\label{item:2:ninf}
An  ultrametric  space
$(X, d)$ 
is 
$(R, \yoomega)$-avoidant
if and only if 
$(X, d)$ 
is 
$(R, n)$-avoidant
for all 
$n\in \zz_{\ge 1}$.

\item\label{item:3:ninf}
An  
ultrametric  space
$(X, d)$ 
is 
$\youfin(R, \yoomega)$-injective
if and only if 
$(X, d)$ 
is 
$\youfin(R, n)$-injective
for all 
$n\in \zz_{\ge 1}$.

 \end{enumerate}
 \end{lem}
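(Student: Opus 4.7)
The plan is to dispatch \ref{item:3:ninf} and \ref{item:2:ninf} by unwinding definitions and then to treat \ref{item:1:ninf} via an equivalence-relation argument, which is the only part requiring an idea.

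For \ref{item:3:ninf}, the key observation is that a metric space belongs to $\youfin(R, \yoomega)$ if and only if it is finite, if and only if it belongs to $\youfin(R, n)$ for some $n$ (namely any $n > \card(Z)$). Thus every extension problem for $\youfin(R, \yoomega)$ is an extension problem for $\youfin(R, \card(Z)+1)$, and conversely $\youfin(R, n) \yosub \youfin(R, \yoomega)$ for each $n$. Part \ref{item:2:ninf} is the same bookkeeping: the quantifier ``$\card(A) < \yoomega$'' means ``$A$ is finite'', i.e. ``$\card(A) < n$ for some $n$'', so $(R, \yoomega)$-avoidance is literally the conjunction of $(R, n)$-avoidance over all $n \in \zz_{\ge 1}$.

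The content is in \ref{item:1:ninf}. The forward implication is immediate, since any $n$-element subset of an infinite $r$-equidistant set remains $r$-equidistant. For the reverse implication, fix $a \in X$ and $r \in R\setminus\{0\}$, and introduce the relation $\sim$ on $\yocball(a, r)$ defined by $x \sim y \iff d(x, y) < r$. The strong triangle inequality makes $\sim$ an equivalence relation. Moreover, since $d(x, y) \le r$ for all $x, y \in \yocball(a, r)$ by the ultrametric inequality, two points satisfy $d(x, y) = r$ if and only if they lie in distinct $\sim$-classes. Consequently, an $r$-equidistant subset of $\yocball(a, r)$ is precisely a system of distinct representatives for $\sim$. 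The assumption that $(X, d)$ is $(R, n)$-haloed for every $n$ therefore means that $\yocball(a, r)$ has at least $n$ equivalence classes for every $n$, so it has infinitely many; picking one representative from each class yields an infinite $r$-equidistant subset, which establishes $(R, \yoomega)$-haloedness.

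The only real obstacle is the backward direction of \ref{item:1:ninf}, where it is tempting but unsound to try to unite the witnessing $r$-equidistant sets of sizes $n$ directly, since they need not be compatible. The equivalence-class reformulation (available only because $\yocball(a, r)$ has diameter at most $r$) bypasses the issue: it identifies the maximum cardinality of an $r$-equidistant subset of $\yocball(a, r)$ with the number of $\sim$-classes, turning a seemingly combinatorial question into a trivial counting one.
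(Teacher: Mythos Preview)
Your proof is correct and follows essentially the same approach as the paper: parts \ref{item:2:ninf} and \ref{item:3:ninf} are dispatched directly from the definitions, and for the nontrivial backward direction of \ref{item:1:ninf} the paper uses exactly your equivalence relation $x \sim y \iff d(x,y) < r$ on $\yocball(a,r)$, observes it has infinitely many classes, and takes a complete system of representatives. Your exposition is slightly more detailed (you spell out why $d(x,y)\le r$ inside the ball and why $r$-equidistant sets coincide with systems of distinct representatives), but the argument is the same.
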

 \begin{proof}
 The statements 
 \ref{item:2:ninf} 
 and 
 \ref{item:3:ninf} 
 follow from 
 Definitions 
 \ref{df:18:halo} 
 and 
 \ref{df:18:avoidant}. 
Next we prove 
\ref{item:1:ninf}. 
We  only need  to show that 
if 
$(X, d)$ 
is 
$(R, n)$-haloed 
for all
 $n\in \zz_{\ge 1}$, 
 then it is 
$\youfin(R, \yoomega)$. 
Take 
$a\in X$ 
and 
$r\in R\setminus \{0\}$. 
We define a relation 
$\yorel$ 
on 
$\yocball(a, r)$ 
by declaring that 
$x\yorel y$ 
means 
$d(x, y)<r$. 
From the strong triangle inequality, 
it follows that 
$\yorel$ 
is an equivalence relation 
on 
$\yocball(a, r)$. 
Let
 $Q$ 
 denote the quotient set of 
 $\yocball(a, r)$ 
 by 
$\yorel$. 
Since 
$(X, d)$ 
is 
$(R, n)$-haloed 
for all 
$n\in \zz_{\ge 1}$, 
we notice that 
$n\le \card(Q)$ 
for all 
$n\in \zz_{\ge 1}$. 
Therefore 
$Q$ 
is infinite. 
Take a complete system 
$W$ 
of representatives of 
$Q$. 
Then 
$W$ 
becomes an infinite  
$r$-equidistant subset of 
$\yocball(a, r)$. 
This proves that 
$(X, d)$ 
is 
$\youfin(R, \yoomega)$-injective. 
 \end{proof}

The next three lemmas play  key roles to 
prove our first main result. 
%statebody 
\begin{lem}\label{lem:A1ToA2}
Let 
$R$
 be a range set and 
 $n\in \zz_{\ge 1}$. 
If an ultrametric space
 $(X, d)$
  is 
$(R, n)$-haloed, 
then it is 
$(R, n)$-avoidant. 
\end{lem}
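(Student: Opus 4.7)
The plan is to use the same equivalence relation as in the proof of Lemma \ref{lem:18:3equi}: on $\yocball(a,r)$, declare $x \yorel y$ iff $d(x,y) < r$, which the strong triangle inequality promotes to an equivalence relation. Crucially, for any $x, y \in \yocball(a,r)$ one has $d(x,y) \le d(x,a) \lor d(a,y) \le r$, so the statement $d(x,y) = r$ is equivalent to $x \not\yorel y$. Consequently a subset of $\yocball(a,r)$ is $r$-equidistant precisely when its points lie in pairwise distinct $\yorel$-classes.

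Fix $a \in X$, $r \in R \setminus \{0\}$, and a subset $A \subseteq \yocball(a,r)$ with $\card(A) < n$. Applying the $(R,n)$-haloed hypothesis at $a$ and $r$, I obtain an $r$-equidistant subset $B \subseteq \yocball(a,r)$ with $n \le \card(B)$. By the observation above, the elements of $B$ occupy at least $n$ pairwise distinct $\yorel$-classes, so the quotient of $\yocball(a,r)$ by $\yorel$ contains at least $n$ classes.

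Since $A$ meets at most $\card(A) < n$ classes, a pigeonhole argument supplies a $\yorel$-class $C$ disjoint from $A$. Any point $p \in C$ lies in $\yocball(a,r)$ and satisfies $p \not\yorel x$ for every $x \in A$, which by the preceding equivalence yields $d(x, p) = r$ for all $x \in A$. This is exactly the $(R, n)$-avoidant conclusion.

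There is no essential obstacle here; the only conceptually important step is recognizing that $r$-equidistant subsets of $\yocball(a,r)$ are exactly transversals of the $\yorel$-classes, so that the haloed hypothesis is simply a lower bound on the number of classes and the avoidance then falls out of pigeonhole.
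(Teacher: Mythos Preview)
Your proof is correct and is essentially the paper's argument rephrased in terms of the equivalence relation $\yorel$: the paper works directly with the haloed set $H$ and observes that each $x\in A$ satisfies $\card(\{\,y\in H\mid d(x,y)<r\,\})\le 1$, then uses pigeonhole to find $p\in H$ at distance $r$ from every point of $A$, which is exactly your transversal-of-classes picture. The only cosmetic difference is that the paper selects $p$ from $H$ itself rather than from an arbitrary $\yorel$-class missed by $A$.
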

%proof
\begin{proof}
To show that 
$(X, d)$ 
is
 $(R, n)$-avoidant, 
take
 $a\in X$, 
 $r\in R$ 
 and take 
a subset $A$ 
of 
$\yocball(a, r)$ 
with $\card(A)<n$. 
Since 
$(X, d)$ 
is 
$(R, n)$-haloed, 
we can find 
an 
$r$-equidistant  subset 
$H$ 
of 
$\yocball(a, r)$
with
 $n\le \card(H)$. 
Since 
$d$ 
is an ultrametric, 
for every 
$x\in A$, 
we obtain 
$\card(\{\, y\in H\mid d(x, y)<r\, \})\le 1$. 
Thus, 
due to 
$\card(A)<\card(H)$,  
we can take 
$p\in H$
 such that 
$r\le d(p, x)$ 
for all 
$x\in A$. 
In this situation,  
the statement 
\ref{item:18:ball1} in Lemma 
\ref{lem:ultraopcl} implies that 
$d(x, p)=r$ 
for all 
$x\in A$. 
\end{proof}

%statebody 
\begin{lem}\label{lem:A2ToA3}
Let 
$R$ 
be a range set and 
$n\in \zz_{\ge 1}$. 
If an  ultrametric space 
$(X, d)$ 
is 
$(R, n)$-avoidant, 
then it is 
$\youfin(R, n+1)$-injective. 
\end{lem}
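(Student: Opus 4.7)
The plan is to proceed by induction on $\card(Y)$ after the standard reduction: to verify $\youfin(R, n+1)$-injectivity we may assume $Y \yosub Z$ with $\phi$ the inclusion, and then extend $Z$ from $Y$ one point at a time (each intermediate space still has at most $n$ points, so it lies in $\youfin(R, n+1)$). It therefore suffices to prove the following one-point extension property: whenever $Y \yosub X$ has $\card(Y) \le n-1$ and $\delta \colon Y \to R$ is such that the assignment $d(z, y) := \delta(y)$ extends $d|_{Y}$ to an ultrametric on $Y \sqcup \{z\}$, there exists $p \in X$ with $d(p, y) = \delta(y)$ for every $y \in Y$.

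For the base case $\card(Y) = 0$ any point of $X$ works. For the inductive step, set $r := \max_{y \in Y} \delta(y)$; if $r = 0$ then $\card(Y) \le 1$ and the conclusion is trivial, so assume $r > 0$. Choose $y_{0} \in Y$ with $\delta(y_{0}) = r$ and partition $Y = Y_{<} \sqcup Y_{=}$ according to whether $\delta(y) < r$ or $\delta(y) = r$. The strong triangle inequality in $Y \sqcup \{z\}$ yields $d(y, y_{0}) \le \max(\delta(y), \delta(y_{0})) = r$, so $Y \yosub \yocball(y_{0}, r; d)$.

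If $Y_{<} = \emptyset$, then the hypothesis that $(X, d)$ is $(R, n)$-avoidant, applied with $a = y_{0}$, radius $r \in R \setminus \{0\}$, and $A = Y$ (which has $\card(A) < n$), produces $p \in \yocball(y_{0}, r; d)$ with $d(p, y) = r = \delta(y)$ for all $y \in Y$. Otherwise $Y_{<}$ is non-empty with $\card(Y_{<}) < \card(Y)$, and the induction hypothesis applied to $Y_{<}$ with $\delta|_{Y_{<}}$ yields $p \in X$ satisfying $d(p, y_{<}) = \delta(y_{<})$ for every $y_{<} \in Y_{<}$. It then remains only to verify that $d(p, y) = r$ holds automatically for each $y \in Y_{=}$.

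This last verification is the crux, and it is supplied by two applications of Lemma~\ref{lem:ultraopcl}\ref{item:18:ball0}. First, fixing any $y_{<} \in Y_{<}$ and $y \in Y_{=}$, the triple $(y_{<}, z, y)$ in $Y \sqcup \{z\}$ satisfies $d(y_{<}, z) = \delta(y_{<}) < r = d(z, y)$, whence $d(y_{<}, y) = r$. Second, the triple $(p, y_{<}, y)$ in $(X, d)$ satisfies $d(p, y_{<}) = \delta(y_{<}) < r = d(y_{<}, y)$, whence $d(p, y) = r$, as required. The main obstacle—and the key conceptual observation—is precisely this: the $(R, n)$-avoidant hypothesis, which a priori only manufactures $r$-equidistant extensions, becomes sufficient for arbitrary one-point extensions once one recognizes that the ultrametric rigidity forces the larger distances correctly as soon as the strictly smaller ones are matched.
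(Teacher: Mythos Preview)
Your proof is correct, but the route differs from the paper's. The paper works from the \emph{minimum} distance: setting $r=\min_{y\in Y}\delta(y)$ and choosing $q$ with $\delta(q)=r$, it applies $(R,n)$-avoidance \emph{once} to the set $A=Y\cap\yocball(q,r)$, obtaining the required point $t$ directly, and then verifies by a two-case isosceles argument that $d(t,y)=\delta(y)$ holds also for the points $y$ outside $\yocball(q,r)$. You instead work from the \emph{maximum} distance and induct on $\card(Y)$: the points at maximal distance $r$ are handled automatically by ultrametric rigidity once the strictly smaller distances are realized, and the latter are furnished by the inductive hypothesis (or by avoidance when $Y_{<}=\emptyset$). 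The paper's argument is a one-shot construction with no induction; your argument trades that for a clean recursive structure and makes explicit the useful principle that in an ultrametric, matching all sub-maximal distances forces the maximal ones. The two approaches are in a sense dual (min versus max), and both rely on the same isosceles lemma at the core.
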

%proof
\begin{proof}
Take an arbitrary 
 $R$-valued ultrametric space 
$(Y\sqcup\{\theta\}, e)$ 
in 
$\youfin(R, n+1)$,  
and 
take  an isometric embedding 
$\phi\colon Y\to X$. 
Notice that 
$\card(Y\sqcup\{\theta\})<n+1$. 
To show that
$(X, d)$ is 
$\youfin(R, n+1)$-injective, 
we only  need to verify  that 
there exists an isometric embedding 
$\Phi\colon Y\sqcup\{\theta\}
\to X$
with 
$\Phi|_{Y}=\phi$. 
Put 
$r=\min_{y\in Y}e(y, \theta)$
 and take 
$q\in Y$ 
such that:
\begin{enumerate}[label=\textup{(\roman*)}, series=SHIKI]

\item\label{item:reqt} 
$r=e(q, \theta)$. 
\end{enumerate}
In this situation, we have 
$r\in R$. 
Put 
$A=\phi(Y)\cap \yocball(\phi(q), r)$. 
Notice  that 
$\card(A)<n$. 
Since 
$(X, d)$ 
is 
$(R, n)$-avoidant 
and since 
$\card(A)< n$, 
there exists 
$t\in \yocball(\phi(q), r)$ 
such that: 
\begin{enumerate}[label=\textup{(\roman*)}, resume*=SHIKI]

\item\label{item:datr}
$d(a, t)=r$
 for all 
$a\in A$. 
\end{enumerate}
We now prove the following statement. 
\begin{enumerate}[label=\textup{(A)}]

\item\label{item:18:condA}
We have 
$d(\phi(y), t)=e(y, \theta)$
for all 
$y\in Y$. 
\end{enumerate}
We divide the 
proof of the statement 
\ref{item:18:condA} into two cases. 

\yocase{1}{$e(y, q)\le r$}
Then 
$e(y, \theta)\le e(y, q)\lor e(q, \theta)\le r$. 
By the minimality of 
$r$, 
we conclude that 
$e(y, \theta)=r$. 
From 
\ref{item:datr}, 
it follows that 
$d(\phi(y), t)=r$. 
Thus 
$d(\phi(y), t)=e(y, \theta)$.

\yocase{2}{$r< e(y, q)$}
In this case, 
the equality 
\ref{item:reqt} 
yields
$e(q, \theta)<e(y, q)$. 
Using  
\ref{item:18:ball0} 
in 
Lemma \ref{lem:ultraopcl}, 
we have 
\begin{enumerate}[label=\textup{(\roman*)}, resume*=SHIKI]
\item\label{item:eyteyq} 
$e(y, \theta)=e(y, q)$. 
\end{enumerate}
Since
 $\phi$ 
 is isometric, 
we obtain
\begin{enumerate}[label=\textup{(\roman*)}, resume*=SHIKI]
\item\label{item:eyqdpydq}
$e(y, q)=d(\phi(y), \phi(q))$. 
\end{enumerate}
The inequality  
$r<e(y, q)$ 
and 
\ref{item:eyqdpydq} 
imply $r<d(\phi(y), \phi(q))$, 
which means 
that 
$\phi(y)\not\in \yocball(\phi(q), r)$. 
From 
 \ref{item:18:ball1} in 
 Lemma 
 \ref{lem:ultraopcl}, 
 we deduce that 
$\yocball(\phi(q), r)=\yocball(t, r)$. 
Hence 
$\phi(y)\not \in \yocball(t, r)$. 
Thus 
$r<d(\phi(y), t)$. 
Using 
$r=d(\phi(q), t)$
(see the definition of 
$t$ 
and 
the eqaulity
\ref{item:datr}), 
we 
have the 
inequality
$d(\phi(q), t)<d(\phi(y), t)$, 
and hence
the statement  
\ref{item:18:ball0} in Lemma \ref{lem:ultraopcl} 
implies that:
\begin{enumerate}[label=\textup{(\roman*)}, resume*=SHIKI]

\item\label{item:dpytdpyq}
$d(\phi(y), \phi(q))=d(\phi(y), t)$. 
\end{enumerate}
Combining 
 equalities 
\ref{item:eyteyq}, 
\ref{item:eyqdpydq}, 
and 
\ref{item:dpytdpyq}, 
we conclude that 
$d(\phi(y), t)=e(y, \theta)$. 
Then the  proof of the statement 
\ref{item:18:condA} 
is completed. 

Next we define a map  
$\Phi\colon Y\sqcup\{\theta\}\to X$ 
by 
$\Phi(x)=\phi(x)$
 for all 
 $x\in Y$ 
 and 
$\Phi(\theta)=t$. 
The statement 
\ref{item:18:condA} shows that 
 $\Phi$ 
 is an isometric embedding 
with 
$\Phi|_{Y}=\phi$. 
Therefore 
the space
 $(X, d)$ 
 is 
$\youfin(R, n+1)$-injective. 
\end{proof}

%statebody 
\begin{lem}\label{lem:A3ToA1}
Let 
$R$ 
be a range set and 
$n\in \zz_{\ge 1}$. 
If an ultrametric space 
$(X, d)$ 
is 
$\youfin(R, n+1)$-injective, 
then it is 
$(R, n)$-haloed. 
\end{lem}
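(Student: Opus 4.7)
The plan is to build an $r$-equidistant subset of $\yocball(a, r)$ of cardinality $n$ by induction, at each step invoking $\youfin(R, n+1)$-injectivity to adjoin one new point. Fix $a \in X$ and $r \in R \setminus \{0\}$. The case $n = 1$ is trivial since the singleton $\{a\}$ is vacuously $r$-equidistant and lies in $\yocball(a, r)$. So I assume $n \ge 2$ and construct sets $A_{1} \subsetneq A_{2} \subsetneq \cdots \subsetneq A_{n}$ inside $\yocball(a, r)$ with $\card(A_{k}) = k$ and each $A_{k}$ being $r$-equidistant.

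Start with $A_{1} := \{a\}$. For the inductive step, suppose $A_{k} = \{a_{1}, \ldots, a_{k}\}$ has been constructed with $a_{1} = a$, $k < n$, and $d(a_{i}, a_{j}) = r$ for $i \neq j$. I form the abstract ultrametric space $Z := A_{k} \sqcup \{\theta\}$ by extending $d|_{A_{k} \times A_{k}}$ and declaring $e(\theta, a_{i}) := r$ for every $i$. To check that $e$ is indeed an ultrametric, the only nontrivial triples are those containing $\theta$: since $e(\theta, a_{i}) = e(\theta, a_{j}) = r$ and $e(a_{i}, a_{j}) \le r$, the strong triangle inequality holds at once. Thus $Z$ is an $R$-valued ultrametric space with $\card(Z) = k+1 \le n < n+1$, so $Z \in \youfin(R, n+1)$.

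Now I apply $\youfin(R, n+1)$-injectivity to the inclusion $A_{k} \hookrightarrow X$, viewed as an isometric embedding of the subspace $A_{k}$ of $Z$ into $X$. This yields an isometric extension $\Phi \colon Z \to X$. Setting $a_{k+1} := \Phi(\theta)$, one obtains $d(a_{k+1}, a_{i}) = e(\theta, a_{i}) = r$ for all $i \le k$. Since $r > 0$, the point $a_{k+1}$ is new, and $d(a_{k+1}, a_{1}) = r$ places it inside $\yocball(a, r)$; the set $A_{k+1} := A_{k} \cup \{a_{k+1}\}$ therefore remains $r$-equidistant. Iterating until $k = n$ produces the desired equidistant set $A_{n}$ of cardinality $n$, establishing that $(X, d)$ is $(R, n)$-haloed.

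There is no substantial obstacle in this argument; the only points deserving a moment of care are isolating the trivial $n = 1$ base case and respecting the cardinality constraint. Specifically, the inductive extension step requires $\card(Z) = k + 1 < n + 1$, which forces the induction to run only while $k \le n - 1$, which is precisely the range needed to reach cardinality $n$.
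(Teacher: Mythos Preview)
Your proof is correct and follows essentially the same recursive construction as the paper: start with a singleton in $\yocball(a,r)$ and repeatedly apply $\youfin(R,n+1)$-injectivity to the one-point extension $A_{k}\sqcup\{\theta\}$ (with $e(\theta,\,\cdot\,)\equiv r$) to grow an $r$-equidistant set to size $n$. The only cosmetic differences are that the paper starts from an arbitrary $s\in\yocball(a,r)$ rather than $a$ itself and does not separately single out the $n=1$ case, neither of which affects the argument.
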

%proof
\begin{proof}
Take 
$a\in X$,  
and
 $r\in R\setminus \{0\}$. 
By recursion,  
we will construct 
$\{A_{i}\}_{i=1}^{n}$ 
such that:
\begin{enumerate}

\item 
we have 
$A_{i}\yosub \yocball(a, r)$ 
for all 
$i\le n$;

\item 
for every 
$i\in \{1, \dots, n\}$, 
we have 
$\card(A_{i})=i$;

\item 
for every 
$i\in \{1, \dots, n-1\}$, 
we  have 
$A_{i}\yosub A_{i+1}$;

\item 
each 
$A_{i}$ 
is 
$r$-equidistant. 
\end{enumerate}
Take 
$s\in \yocball(a, r)$ 
and put 
$A_{1}=\{s\}$. 
Fix 
$l< n$ 
and  
 assume that we have already constructed 
 $\{A_{i}\}_{i=1}^{l}$. 
 We shall construct 
 $A_{l+1}$. 
 Take a point 
 $\theta$ 
 such that 
  $\theta\not \in X$
  and 
define an ultrametric 
$e$ 
on 
$A_{l}\sqcup \{\theta\}$ by 
$e|_{A_{l}^{2}}=d$ 
and 
$d(x, \theta)=r$
for all 
$x\in A_{l}$. 
We also define 
an isometric embedding 
$\phi\colon A_{l}\to X$ 
by 
$\phi(a)=a$. 
Using the 
$\youfin(R, n+1)$-injectivity
together with  
$\card(A_{l}\sqcup\{\theta\})<n+1$, 
we obtain 
$p\in \yocball(a, r)$ 
such that 
$d(\phi(x), p)=e(x, \theta)=r$
for all 
$x\in A_{l}$. 
Put 
$A_{l+1}=A_{l}\sqcup \{p\}$. 
Then the resulting  set 
$A_{n}$ 
is 
$r$-equidistant and 
satisfies 
$\card(A_{n})=n$. 
Hence
 $(X, d)$ 
 is 
 $(R, n)$-haloed. 
\end{proof}
Now we are able to  prove Theorem 
\ref{thm:18:chara}. 
%proof
\begin{proof}[Proof of Theorem \ref{thm:18:chara}]
Combining Lemmas 
\ref{lem:A1ToA2}, 
\ref{lem:A2ToA3}, 
and 
\ref{lem:A3ToA1}, 
we obtain the implications 
$\ref{item:18:main11}\To
\ref{item:18:main12}$, 
$\ref{item:18:main12}\To 
\ref{item:18:main13}$, 
and 
$\ref{item:18:main13}\To 
\ref{item:18:main11}$. 
Thus the three statements 
\ref{item:18:main11}, 
\ref{item:18:main12}, 
and 
\ref{item:18:main13} are 
equivalent to each other.

 The 
 equivalences between 
\ref{item:18:main21}, 
\ref{item:18:main22}, and 
\ref{item:18:main23}  
are
 deduced from 
 Lemma 
\ref{lem:18:3equi} and 
the former part
of the theorem. 
This finishes the proof
of Theorem 
\ref{thm:18:chara}. 
\end{proof}

Using Theorem 
\ref{thm:18:chara}, 
we clarify the metric structure of 
the Cartesian product of 
the Urysohn universal ultrametric spaces. 

%statebody
\begin{lem}\label{lem:18:prodhalo}
If 
$R$ 
is a range set
and 
an ultrametric space 
$(X, d)$ 
is 
$(R, \yoomega)$-haloed,
then the space 
$(X\times X, d\times_{\infty}d)$ 
is 
$(R, \yoomega)$-haloed. 
\end{lem}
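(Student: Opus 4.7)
The plan is to verify the $(R, \yoomega)$-haloed property directly at an arbitrary center of an arbitrary closed ball in the $\ell^{\infty}$-product. The key observation is that, by the very definition of $d\times_{\infty} d$, the closed ball decomposes as a product: for every $(a, b) \in X\times X$ and every $r\in(0, \infty)$,
\[
\yocball\bigl((a, b), r;\, d\times_{\infty} d\bigr) \;=\; \yocball(a, r; d)\times \yocball(b, r; d).
\]
This is the only structural fact needed, and it is immediate from $(d\times_{\infty} d)((x, y), (a, b))= d(x, a)\lor d(y, b)$.

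Next, I would fix an arbitrary $(a, b)\in X\times X$ and an arbitrary $r\in R\setminus\{0\}$. Applying the hypothesis that $(X, d)$ is $(R, \yoomega)$-haloed to the point $a$ and the radius $r$, I obtain an infinite $r$-equidistant subset $A\yosub \yocball(a, r; d)$. I would then propose the candidate
\[
A' \;=\; \{\,(x, b)\mid x\in A\,\}\;\yosub\; \yocball(a, r; d)\times \yocball(b, r; d) \;=\; \yocball\bigl((a, b), r;\, d\times_{\infty} d\bigr).
\]
Since the map $x\mapsto (x, b)$ is injective, $A'$ is infinite. For distinct $(x, b), (y, b)\in A'$ with $x\neq y$ in $A$, a one-line computation yields
\[
(d\times_{\infty} d)\bigl((x, b), (y, b)\bigr) \;=\; d(x, y)\lor d(b, b) \;=\; r\lor 0 \;=\; r,
\]
so $A'$ is $r$-equidistant. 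This shows $(X\times X, d\times_{\infty} d)$ is $(R, \yoomega)$-haloed.

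There is no real obstacle here: the product-ball identity for the $\ell^{\infty}$ metric does all the work, and one simply transports an infinite equidistant set from $X$ into the product by freezing the second coordinate. The argument also makes clear why $\ell^{\infty}$ is the right choice of product metric — for any $p<\infty$ the distance between $(x, b)$ and $(y, b)$ would still be $d(x, y)=r$, but the product ball would no longer equal the product of balls, and one would have to work harder (and indeed, as Theorem~\ref{thm:18:nonury} foreshadows, the analogous statement for $\ell^{p}$-products of the ordinary Urysohn space fails).
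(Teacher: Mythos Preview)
Your proof is correct and follows essentially the same approach as the paper: both arguments transport an $r$-equidistant set from the factor $(X,d)$ into the $\ell^{\infty}$-product via the identity $\yocball((a,b),r;d\times_{\infty}d)=\yocball(a,r;d)\times\yocball(b,r;d)$. The only cosmetic difference is that the paper applies the haloed hypothesis at both coordinates and takes the full product $E\times F$, whereas you apply it once and freeze the second coordinate as $A\times\{b\}$; your version is marginally more economical but the idea is the same.
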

%proof
\begin{proof}
To prove that 
$(X\times X, d\times_{\infty}d)$ 
is 
$(R, \yoomega)$-haloed, 
take 
$(a, b)\in X\times X$ 
and 
$r\in R\setminus\{0\}$. 
Since $(X, d)$ 
is 
$(R, \yoomega)$-haloed, 
we can find countable 
$r$-equidistant subsets 
$E$ 
and 
$F$ 
of 
$\yocball(a, r)$ 
and 
$\yocball(b, r)$, 
respectively. 
By the definition of 
$d\times_{\infty} d$, 
the set 
$E\times F$ 
is 
a countable 
$r$-equidistant subset 
of 
$\yocball((a, b), r; d\times_{\infty}d)$. 
\end{proof}

Subsequently, 
we show Theorem 
\ref{thm:18:product}. 
%proof
\begin{proof}[Proof of Theorem \ref{thm:18:product}]
We divide the 
proof into the countable and 
uncountable cases. 

\yocase{1}{$R$ is finite or  countable}
In this case, 
 $(X, d)$ 
 is 
 a separable complete 
 $\youfin(R, \yoomega)$-injective 
 $R$-valued ultrametric space. 
In particular, 
 $(X\times X, d\times_{\infty}d)$
 is separable and complete. 
Lemma 
\ref{lem:18:prodhalo} 
shows that 
$(X\times X, d\times_{\infty}d)$ 
is 
$(R, \yoomega)$-haloed, 
and hence it is 
$\youfin(R, \yoomega)$-injective 
by 
Theorem \ref{thm:18:chara}. 
Thus 
the product 
$(X\times X, d\times_{\infty}d)$
is the 
$R$-Urysohn
 universal 
ultrametric space.

\yocase{2}{$R$ is uncountable}
In this setting, 
$(X, d)$ 
is the 
$R$-petaloid ultrametric space 
(see Definition \ref{df:18:origin}). 
We define a petal structure on 
$(X\times X, d\times_{\infty}d)$
by 
$\yopetal{X\times X}{S}=\yopetal{X}{S}\times \yopetal{X}{S}$
for 
$S\in \yoofam{R}$. 
This structure satisfies the 
property
\ref{item:pr:cap}. 
The property 
\ref{item:pr:sep}
follows from 
the separable case proven above. 
We now  verify 
\ref{item:pr:cup}. 
For every pair 
$S, T\in \yoofam{R}$, 
we have 
$\yopetal{X}{S}\cup \yopetal{X}{T}\yosub 
\yopetal{X}{S\cup T}$
(see \cite[Lemma 2.1]{Ishiki2023Ury}). 
Then  
we also have 
$\yopetal{X}{S}\times \yopetal{X}{T}\yosub \yopetal{X}{S\cup T}\times 
\yopetal{X}{S\cup T}$. 
Thus,  
using 
$X=\bigcup_{S\in \yoofam{R}}\yopetal{X}{S}$
(the property 
\ref{item:pr:cup} 
for 
$(X, d)$), 
we obtain 
\[
X\times X=\bigcup_{S, T\in \yoofam{R}}\yopetal{X}{S}\times 
\yopetal{X}{T}=\bigcup_{S\in \yoofam{R}}\yopetal{X\times X}{S}. 
\]
This means that 
\ref{item:pr:cup} 
is valid. 
Next we confirm the property 
\ref{item:pr:distance}. 
Take $S, T\in \yoofam{R}$
and take 
$(x, y)\in \yopetal{X\times X}{T}$. 
From   $\yopetal{X\times X}{S}
=\yopetal{X}{S}\times \yopetal{X}{S}$, 
and 
the property 
\ref{item:pr:distance} 
for 
$(X, d)$, 
it follows that 
$d(x, \yopetal{X}{S}), d(y, \yopetal{X}{S})
\in \{0\}\cup (T\setminus S)$. 
Thus the 
 the definition of 
$d\times_{\infty}d$
yields 
\[
(d\times_{\infty}d)((x, y), \yopetal{X\times X}{S})=
\max\{d(x, \yopetal{X}{S}), d(y, \yopetal{X}{S})\}. 
\]
Hence 
$(d\times_{\infty}d)((x, y), \yopetal{X\times X}{S})\in \{0\}\cup (T\setminus S)$. 
Namely, the property 
\ref{item:pr:distance} 
is fullfiled. 

Therefore 
the product 
$(X\times X, d\times_{\infty}d)$ 
is 
the 
$R$-petaloid space. 
The proof of Theorem 
\ref{thm:18:product} is finished. 
\end{proof}

\begin{rmk}
Similarly to 
Theorem 
\ref{thm:18:product}, 
we can prove the next statement:
For every pair of  range sets 
$R_{0}$ 
and 
$R_{1}$, 
let 
$(X, d)$ 
and 
$(Y, e)$ 
denote the 
$R_{0}$-Urysohn 
and 
the 
$R_{1}$-Urysohn 
universal ultrametric spaces, respectively. 
Then the product 
$(X\times Y, d\times_{\infty}e)$ is 
the 
$(R_{0}\cup R_{1})$-Urysohn
 universal ultrametric space. 
\end{rmk}

%Since 
%the $\youfin(R, n)$-injectivity can be considered as 
%a global property and 
%the $(R, n)$-haloed property is local property, 
%if we assume that an ultrametric space is homogeneous, 
%then it is hogehoge. 

We say that 
a metric space 
$(X, d)$ 
is 
\emph{(isometrically)
homogeneous}
if for every pair $x, y\in X$, 
there exists an isometric bijection 
$f\colon X\to X$ such that 
$f(x)=y$. 

As an application, 
we obtain the next proposition. 

\begin{prop}
Let $R$ be a range set, 
$n\in \zz_{\ge 1}$, 
and 
$(X, d)$
be an
$R$-valued ultrametric space. 
Assume that 
the following conditions: 
\begin{enumerate}[label=\textup{(\arabic*)}]
\item 
there exists $a\in X$ such that 
for every $r\in R\setminus \{0\}$
there exists an $r$-equidistant  subset $A$ of 
$\yocball(a, r)$ with $\card(A)\ge n$;
\item 
$(X, d)$ is isometrically homogeneous. 
\end{enumerate}
Then 
$(X, d)$
is $\youfin(R, n)$-injective. 
\end{prop}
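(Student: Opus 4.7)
The plan is to reduce the statement to Theorem \ref{thm:18:chara} by using homogeneity to globalize the local existence of equidistant sets. Condition (1) provides the $(R, n)$-halo condition only at the particular point $a$, but by \ref{item:18:main11} $\To$ \ref{item:18:main13} in Theorem \ref{thm:18:chara}, what is needed for $\youfin(R, n+1)$-injectivity is the halo condition at every point. Since $\youfin(R, n) \yosub \youfin(R, n+1)$, any $\youfin(R, n+1)$-injective space is automatically $\youfin(R, n)$-injective, so it suffices to promote ``haloed at $a$'' to ``haloed at every point''.

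First, I would fix an arbitrary $b \in X$ and $r \in R \setminus \{0\}$. By the homogeneity assumption (2), there is an isometric bijection $f \colon X \to X$ with $f(a) = b$. Because $f$ is an isometric bijection, it sends $\yocball(a, r)$ onto $\yocball(b, r)$ and transforms $r$-equidistant subsets into $r$-equidistant subsets of the same cardinality. Applying $f$ to the set $A$ produced by condition (1), one obtains an $r$-equidistant subset $f(A) \yosub \yocball(b, r)$ with $\card(f(A)) \ge n$. Since $b$ and $r$ were arbitrary, this shows that $(X, d)$ is $(R, n)$-haloed.

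Second, the implication \ref{item:18:main11} $\To$ \ref{item:18:main13} in Theorem \ref{thm:18:chara} then immediately yields that $(X, d)$ is $\youfin(R, n+1)$-injective, and a fortiori $\youfin(R, n)$-injective. I do not expect any genuine obstacle here; the only point worth verifying carefully is that an isometric bijection maps the closed ball of radius $r$ around $a$ onto the closed ball of radius $r$ around $f(a)$, which follows at once from the definition of the closed ball together with the surjectivity of $f$.
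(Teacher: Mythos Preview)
Your proof is correct and follows precisely the approach the paper intends: the proposition is stated in the paper without proof as a direct application of Theorem~\ref{thm:18:chara}, and your argument (transport the equidistant sets via homogeneity to verify the $(R,n)$-haloed condition globally, then invoke \ref{item:18:main11}$\Rightarrow$\ref{item:18:main13}) is exactly the intended two-line justification. Your observation that $\youfin(R,n+1)$-injectivity implies $\youfin(R,n)$-injectivity handles the slight mismatch between the halo level and the stated conclusion.
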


\begin{rmk}
In 
\cite{delhomme2015homogeneous}, 
it  is 
shown  that 
 an ultrametric space
 $(X, d)$ is isometrically homogeneous 
 if and only if it is 
 \emph{ultrahomogeneous}, 
 i.e., 
 for every finite subset $A$ of $X$ and 
 for every isometric embedding 
 $I\colon A\to X$, there exists 
 a bijective isometry 
 $H\colon X\to X$ such that 
 $H|_{A}=I$. 
 
\end{rmk}

\begin{lem}\label{lem:triinq}
Let 
$X$
 be a set, 
 and 
 $w\colon X\times X\to [0, \infty)$
  be a symmetric function, 
  i.e., 
  $w(x, y)=w(y, x)$
  whenever 
  $x, y\in X$.  
If there exists $L\in (0, \infty)$ such that 
$w(x, y)\in [L, 2L]$ for all distinct $x, y\in X$, 
then $w$
satisfies the 
triangle inequality. 
\end{lem}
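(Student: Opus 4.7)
The plan is to directly verify $w(x,y) \le w(x,z) + w(z,y)$ for each triple $(x,y,z) \in X^{3}$, exploiting the ratio $2L/L = 2$ between the endpoints of the interval $[L, 2L]$. The nontrivial case is when $x, y, z$ are pairwise distinct: here the hypothesis places all three of $w(x,y), w(x,z), w(z,y)$ in $[L, 2L]$, so
\[
w(x,y) \le 2L = L + L \le w(x,z) + w(z,y),
\]
which is the desired inequality. This single line is the entire substance of the argument.

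The remaining cases are degenerate and are disposed of one by one. If $z = x$ or $z = y$, then, using symmetry and cancellation, the inequality reduces to $w(a,a) \ge 0$ for some $a \in X$, which is immediate since $w$ takes values in $[0, \infty)$. If instead $x = y$ but $z \ne x$, the inequality becomes $w(x,x) \le 2 w(x,z)$, and since $w(x,z) \ge L$ it suffices that $w(x,x) \le 2L$; this holds under the natural convention $w(x,x) = 0$ that one expects when applying this lemma to a candidate distance function.

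There is essentially no obstacle: the only ingredient is the arithmetic observation that the uniform upper bound $2L$ on the ``longest side'' is exactly matched by the sum of two uniform lower bounds $L$ on the ``other sides''. Were the ratio of upper to lower bound strictly greater than $2$, the argument would fail, so the factor $2$ in the hypothesis is sharp.
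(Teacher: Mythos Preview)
Your proof is correct and takes essentially the same approach as the paper: the chain $w(x,y) \le 2L = L + L \le w(x,z) + w(z,y)$ for pairwise distinct $x,y,z$ is exactly the paper's argument, and in fact the paper omits the degenerate cases that you handle explicitly. Your observation that the case $x=y\ne z$ tacitly requires $w(x,x)\le 2L$ is valid---both the paper's proof and its sole application implicitly assume $w(x,x)=0$.
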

\begin{proof}
Take mutually distinct three points  $x, y, z\in X$. 
Then we have 
\[
w(x, y)\le 2L=L+L\le w(x, z)+w(z, y). 
\]
This completes the proof. 
\end{proof}

The proof of Theorem 
\ref{thm:18:nonury}  
is 
based on linear algebra. 
  %proof
  \begin{proof}[Proof of Theorem \ref{thm:18:nonury}]
Take
 $s_{0}, s_{1}\in \yourysp$
  with 
  $\yourydis(s_{0}, s_{1})=2^{-1/p}$ if $p<\infty$; 
  otherwise,  
$\yourydis(s_{0}, s_{1})=1$.
Put  
$A=\{s_{0}, s_{1}\}\times \{s_{0}, s_{1}\}$. 
In other words, 
$A=\{(s_{0}, s_{0}), (s_{0}, s_{1}), (s_{1}, s_{0}), (s_{1}, s_{1})\}$. 
Of course,
 $A$ 
 is a subset of 
 $\yourysp\times \yourysp$. 
  We now consider a one-point extension of 
  $(A, \yourydis\times_{p}\yourydis)$. 
  Let 
  $\theta$ 
  be a point such that 
  $\theta\not\in A$. 
For each 
$\mathbf{r}=(r_{00}, r_{01}, r_{10}, r_{11})\in [2^{-1}, 1]^{4}$, 
we define a metric 
$e_{\mathbf{r}}$ 
on  
$A\sqcup\{\theta\}$ 
by 
$e_{\mathbf{r}}|_{A^{2}}=\yourydis\times_{p}\yourydis$ and 
  $e_{\mathbf{r}}((s_{i}, s_{j}), \theta)=r_{ij}$. 
  Since each 
  $r_{ij}$
   belongs to 
   $[2^{-1}, 1]$ 
   and 
  $\yourydis\times_{p}\yourydis$ 
  on 
  $A$ 
  takes only values in 
  $\{0, , 2^{-1/p}, 1\}$
  and since 
  $\{2^{-1/p}, 1\}\yosub [2^{-1}, 1]$, 
  the function 
  $e_{\mathbf{r}}\colon (A\sqcup \{\theta\})^{2}\to \rr$
  actually satisfies the triangle inequality
  due to Lemma 
\ref{lem:triinq} for $L=2^{-1}$.

  To establish that 
  $(\yourysp\times \yourysp, \yourydis\times_{p}\yourydis)$ 
  is not 
  $\yofin$-injective, 
 we shall  show that we can take  a point  
  $\mathbf{r}\in [2^{-1}, 1]^{4}$ 
  such that 
  there is no point 
  $(\alpha, \beta)\in \yourysp\times \yourysp$ 
  satisfying that:
  \begin{enumerate}[label=\textup{(I)}]
  \item\label{item:18:inj}
  $e_{\mathbf{r}}((s_{i}, s_{j}), \theta)=(\yourydis\times_{p}\yourydis)((s_{i}, s_{j}), (\alpha, \beta))$
  for all 
  $(s_{i}, s_{j})\in A$. 
  \end{enumerate}
 Suppose, 
 contrary to our claim,  
 that 
 for every 
 $\mathbf{r}\in [2^{-1}, 1]^{4}$, 
 there exists 
 $(\alpha, \beta)\in \yourysp\times \yourysp$ 
 satisfying the condition 
 \ref{item:18:inj}. 
  Put 
  $x=\yourydis(s_{0}, \alpha)$,
 $y=\yourydis(s_{1}, \alpha)$, 
 $z=\yourydis(s_{0}, \beta)$, 
 and 
 $w=\yourydis(s_{1}, \beta)$.

In the case of 
$p=\infty$, 
take
 $\mathbf{r}=(r_{00}, r_{01}, r_{10}, r_{11})\in [2^{-1}, 1]^{4}$ 
 so that 
$r_{01}<r_{11}$, 
and 
$r_{10}<r_{11}$. 
By the definition of 
$\yourydis\times_{\infty}\yourydis$, 
the values 
$x$, 
$y$, 
$z$, 
and 
$w$ should satisfy 
$x\lor z=r_{00}$, 
$x\lor w=r_{01}$,  
$y\lor z=r_{10}$, 
and 
$y\lor w=r_{11}$. 
Then we have  
$y\le r_{10}$ 
and 
$w\le r_{01}$, 
which  imply that 
$y\lor w\le r_{10}\lor r_{01}<r_{11}$. 
This is a contradiction to
 $y\lor w=r_{11}$. 
 
 In the case of 
 $p<\infty$, 
 by the definition of 
 $\yourydis\times_{p}\yourydis$
 and the condition 
 \ref{item:18:inj}, 
 the values 
 $x$, 
 $y$, 
 $z$, 
 and 
 $w$
 should satisfy 
 $x^{p}+z^{p}=r_{00}^{p}$, 
 $x^{p}+w^{p}=r_{01}^{p}$, 
 $y^{p}+z^{p}=r_{10}^{p}$, 
 and 
 $y^{p}+w^{p}=r_{11}^{p}$. 
 Namely, we obtain
 \begin{align}\label{al:18:matrix}
\begin{pmatrix}
1 & 0 & 1 & 0 \\
1 & 0& 0 & 1 \\
0 & 1 & 1 & 0 \\
0 & 1 & 0 & 1
\end{pmatrix}
\begin{pmatrix}
x^{p}\\
y^{p}\\
z^{p}\\
w^{p}
\end{pmatrix}
=
\begin{pmatrix}
r_{00}^{p}\\
r_{01}^{p}\\
r_{10}^{p}\\
r_{11}^{p}
\end{pmatrix}.
 \end{align}
Since
 $\mathbf{r}\in [2^{-1}, 1]^{4}$ 
 is arbitrary, 
we notice that  the set 
$[2^{-p}, 1]^{4}$ 
is 
contained in 
 the image of the matrix 
 in 
 \eqref{al:18:matrix}. 
 However, 
 the rank of the matrix is 
 three 
(since the dimension of the kernel is one). 
This result  contradicts the fact that 
the topological dimension of  
$[2^{-p}, 1]^{4}$ 
is four 
(or we can choose  four specific  linearly independent  vectors  from this set). 
This completes the proof of 
Theorem 
\ref{thm:18:nonury}. 
 \end{proof}
 %section
\section{Hyperspaces as universal spaces}\label{sec:hyper}
In this section, 
we investigate the metric structures of 
hyperspaces of
Urysohn 
 universal ultrametric spaces.

We begin with the following 
 well-known lemma on 
 the Hausdorff distance. 
%statebody
\begin{lem}\label{lem:18:folklore1}
For every metric space 
$(X, d)$, 
and for  every pair 
 $ E, F\in \yoexpsp{X}$, 
the
Hausdorff distance
$\yoexpdis{d}(E, F)$ is 
equal to 
the infimum of all 
$r\in (0, \infty)$ 
such that 
$E\yosub \bigcup_{b\in F}\yocball(b, r)$ 
and 
$F\yosub \bigcup_{a\in E}\yocball(a, r)$. 
\end{lem}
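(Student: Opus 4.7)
The plan is to show both inequalities between $H := \yoexpdis{d}(E,F)$ and $I := \inf S$, where
\[
S = \left\{\, r \in (0, \infty) \,\middle|\, E \yosub \bigcup_{b \in F} \yocball(b, r) \text{ and } F \yosub \bigcup_{a \in E} \yocball(a, r)\,\right\}.
\]
The argument is elementary and does not actually require compactness; it uses only the two characterizations of the point-to-set distance $d(a, A) = \inf_{b \in A} d(a, b)$.

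First, to prove $H \le I$, take any $r \in S$. For each $a \in E$, the inclusion $E \yosub \bigcup_{b \in F} \yocball(b, r)$ produces some $b \in F$ with $d(a, b) \le r$, whence $d(a, F) \le r$. Taking the supremum over $a \in E$ yields $\sup_{a \in E} d(a, F) \le r$, and the symmetric argument gives $\sup_{b \in F} d(b, E) \le r$. Hence $H \le r$, and taking the infimum over $r \in S$ gives $H \le I$.

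Next, to prove $I \le H$, fix any $r \in (0, \infty)$ with $r > H$. For each $a \in E$ we have $d(a, F) \le H < r$, so by the definition of the infimum there exists $b \in F$ with $d(a, b) < r$, and thus $a \in \yocball(b, r)$. This shows $E \yosub \bigcup_{b \in F} \yocball(b, r)$, and the symmetric reasoning gives $F \yosub \bigcup_{a \in E} \yocball(a, r)$. Consequently $r \in S$, so $I \le r$; letting $r \downarrow H$ yields $I \le H$.

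Combining the two inequalities gives $H = I$, which is the claim. The only mild subtlety to watch is the distinction between strict and non-strict inequalities in the passage between $d(a, F) < r$ and membership in the closed ball $\yocball(b, r)$; this is handled by always choosing $r$ strictly larger than $H$ in the second direction, so that the strict inequality $d(a, F) < r$ guarantees the existence of a witness $b \in F$ inside the closed ball of radius $r$. No nontrivial obstacle arises.
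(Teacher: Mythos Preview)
Your proof is correct; the paper itself does not supply a proof of this lemma, treating it as a well-known fact about the Hausdorff distance and stating it without argument. Your two-inequality approach is the standard one and is fully rigorous; the remark that compactness is not essential is also accurate, since the argument uses only the definition of the point-to-set distance and works for arbitrary non-empty subsets (interpreted in the extended reals if needed).
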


For the sake of self-containedness, 
we give a proof of the next
basic 
lemma. 
%statebody
\begin{lem}\label{lem:18:folklore2}
Take  
$m\in \zz_{\ge 2}\sqcup\{\infty\}$, 
and 
$l\in (0, \infty]$. 
Let 
$(X, d)$ 
be a metric space. 
Then the next two statements are true:
\begin{enumerate}[label=\textup{(\arabic*)}]
\item\label{item:18:expsep}
If 
$(X, d)$ 
is separable, then 
so is 
$(\yoexpspml{X}{m}{l}, \yoexpdis{d})$. 
\item\label{item:18:expcomp}
If 
$(X, d)$ 
is complete, 
then 
so is 
$(\yoexpspml{X}{m}{l}, \yoexpdis{d})$. 
\end{enumerate}
\end{lem}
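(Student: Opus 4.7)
The plan is to handle the two statements via two standard hyperspace arguments. For \ref{item:18:expsep}, I would fix a countable dense set $D\yosub X$ and consider the countable family of all non-empty finite subsets $F\yosub D$ satisfying $\card(F)\le m$ and $\yodiam(F)\le l$. To prove density in $(\yoexpspml{X}{m}{l},\yoexpdis{d})$, I take $K\in\yoexpspml{X}{m}{l}$ and $\epsilon\in(0,l)$, choose a finite $\epsilon$-net $N\yosub K$ with $\card(N)\le m$ (using $\card(K)\le m$ in the case $m<\yoomega$ and total boundedness of $K$ otherwise), and then pick $d_{x}\in D$ with $d(x,d_{x})<\epsilon$ for each $x\in N$. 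The resulting set $F=\{d_{x}:x\in N\}$ satisfies $\yoexpdis{d}(K,F)<2\epsilon$ via Lemma \ref{lem:18:folklore1}, while the strong triangle inequality (available in the ultrametric setting of this paper) bounds $\yodiam(F)$ by $\yodiam(N)\lor \epsilon\le l$, placing $F$ inside the chosen family.

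For \ref{item:18:expcomp}, I would first recall the classical Kuratowski theorem that $(\yoexpsp{X},\yoexpdis{d})$ is complete whenever $(X,d)$ is, which is established by defining the candidate limit of a Cauchy sequence $\{K_{n}\}$ as $K=\bigcap_{n\ge 1}\overline{\bigcup_{k\ge n}K_{k}}$ and verifying nonemptiness, compactness and Hausdorff convergence. It then suffices to prove that $\yoexpspml{X}{m}{l}$ is closed in $(\yoexpsp{X},\yoexpdis{d})$. The diameter inequality passes to the limit via $|\yodiam(K_{n})-\yodiam(K)|\le 2\yoexpdis{d}(K_{n},K)$. For the cardinality bound, assuming for contradiction that $\card(K)>m$, I would select $m+1$ distinct points $x_{0},\dots,x_{m}\in K$ whose pairwise distances are bounded below by some $\delta>0$; for $n$ large with $\yoexpdis{d}(K_{n},K)<\delta/3$ each $x_{i}$ admits a nearest point in $K_{n}$, and the resulting assignment is injective, forcing $\card(K_{n})\ge m+1$ and contradicting $K_{n}\in\yoexpspml{X}{m}{l}$.

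The principal obstacle is in the separability step: ensuring the approximating finite subset of $D$ does not exceed the diameter bound $l$. In a general metric space a perturbation of size $\epsilon$ can inflate the diameter by up to $2\epsilon$, which would rule out the desired approximation when $\yodiam(K)=l$; but in the ultrametric context used throughout this paper the strong triangle inequality keeps the perturbed diameter controlled by $\yodiam(N)\lor \epsilon$, neutralizing the issue entirely. Once density is secured and the closedness of $\yoexpspml{X}{m}{l}$ inside $\yoexpsp{X}$ is verified, the remainder of the proof is routine bookkeeping.
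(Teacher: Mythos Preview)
Your treatment of \ref{item:18:expcomp} is essentially identical to the paper's: cite the classical completeness of $(\yoexpsp{X},\yoexpdis{d})$, then show $\yoexpspml{X}{m}{l}$ is closed via the $2$-Lipschitz bound on $\yodiam$ and the $\delta/3$-pigeonhole argument for cardinality.

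For \ref{item:18:expsep}, however, the paper takes a much shorter route that sidesteps the diameter issue you flag as the ``principal obstacle.'' The paper simply notes that the finite subsets of a countable dense $Q\yosub X$ are dense in the full hyperspace $(\yoexpsp{X},\yoexpdis{d})$, so $\yoexpsp{X}$ is separable; and since every subspace of a separable metric space is separable, $(\yoexpspml{X}{m}{l},\yoexpdis{d})$ inherits separability automatically. There is no need to produce a dense set lying \emph{inside} $\yoexpspml{X}{m}{l}$, and hence no need to control the diameter of the approximants at all. Your approach is correct in the ultrametric setting (the bound $\yodiam(F)\le\yodiam(N)\lor\epsilon$ is valid there), but the lemma is stated for general metric spaces, and your argument does not cover that generality, whereas the paper's hereditary-separability observation does. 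In short, the obstacle you identify is self-imposed; the paper's argument bypasses it entirely and is both shorter and more general.
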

%proof
\begin{proof}
To prove  
\ref{item:18:expsep}, 
take a countable dense subset 
$Q$ 
of 
$X$. 
Then the set of all finite  non-empty subsets of 
$Q$ 
is dense in 
$(\yoexpsp{X}, \yoexpdis{d})$.
Thus, it is separable. 
Since 
$\yoexpspml{X}{m}{l}$
 is a metric subspace of 
 $\yoexpsp{X}$, 
 the space 
 $(\yoexpspml{X}{m}{l}, \yoexpdis{d})$
 is separable.

Next we  show 
\ref{item:18:expcomp}. 
The completeness of 
$(\yoexpsp{X}, \yoexpdis{d})$ 
can be  proven  in 
a 
similar way to 
\cite[Proposition 7.3.7]{BBI}. 
Thus 
it suffices to verify that 
$\yoexpspml{X}{m}{l}$ 
is 
closed in 
$(\yoexpsp{X}, \yoexpdis{d})$
for all 
$m\in \zz_{\ge 2}$ 
and 
$l\in (0, \infty)$. 
Take a sequence 
$\{K_{i}\}_{i\in \mm}$ 
in 
$\yoexpspml{X}{m}{l}$
and assume that 
it  converges to 
a set 
$L\in \yoexpsp{X}$. 
Since the map 
$D\colon \yoexpsp{X}\to \rr$ 
defined by 
$D(E)=\yodiam E$ 
is continuous 
(in fact, 
it is $2$-Lipschitz),  
we obtain
$\yodiam L\le l$. 
We next 
 show 
 $\card(L)\le m$. 
For the sake of contradiction, 
suppose that 
$m<\card(L)$. 
Then there exists a subset 
$E$ 
of 
$L$ 
with 
$\card(E)=m+1$. 
Put 
$r=\min \{\, d(x, y)\mid x, y\in E, x\neq y\, \}$, 
and 
take a sufficiently large
 $n\in \mm$ 
so that 
$\yoexpdis{d}(L, K_{n})<r/3$. 
In this case, 
Lemma 
\ref{lem:18:folklore1} 
implies that 
for every 
$x\in E$, 
there exists 
$y_{x}\in K_{n}$
 such that 
 $d(x, y_{x})<r/3$. 
By the definition of 
$r$, 
we have 
$y_{x}\neq y_{x^{\prime}}$ 
for all 
distinct 
$x, x^{\prime}\in E$. 
Thus the set 
$F=\{\, y_{x}\in K_{n}\mid x\in E\, \}$ 
has 
cardinality 
$m+1$. 
This is impossible due to 
$\card(K_{n})\le m$. 
Therefore  
$\card(L)\le m$. 
Subsequently, 
we obtain 
$L\in \yoexpspml{X}{m}{l}$. 
This finishes the proof. 
\end{proof}

Let 
$(X, d)$ 
be an ultrametric space, 
and 
$r\in (0, \infty)$. 
For  
$E\in \yoexpsp{X}$
we denote by 
$\yobrasp{E}{r}$
the set 
$\{\, \yocball(a, r)\mid a\in E\, \}$. 
For 
 convenience, 
we also define  
$\yobrasp{E}{0}=\{\, \{a\}\mid a\in E\, \}$, 
which is the case of 
$r=0$.  
By 
\ref{item:18:ball2} in 
Lemma 
\ref{lem:ultraopcl}
and the compactness of 
$E$, 
the set 
$\yobrasp{E}{r}$ 
is finite for all 
$r\in (0, \infty)$. 
The next
proposition 
 is an analogue of 
\cite[Theorem 5.1]{MR4462868} 
and 
\cite[Corollary 2.30]{Ishiki2023const}
for the Hausdorff distance.

%statebody
\begin{prop}\label{prop:18:haus}
Let 
$(X, d)$ 
be an ultrametric space. 
Then for 
every 
pair of  subsets 
$E$ and 
$F$  of 
$X$, 
the value 
$\yoexpdis{d}(E, F)$ 
is equal to the minimum 
$r\in R$ 
such that 
$\yobrasp{E}{r}=\yobrasp{F}{r}$. 
\end{prop}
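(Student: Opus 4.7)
The plan is to reduce the proposition to the cleaner equivalence
\[
\yoexpdis{d}(E, F) \le r \iff \yobrasp{E}{r}=\yobrasp{F}{r}
\]
for each $r\in [0,\infty)$, and then to read off the minimum statement using the compactness of $E$ and $F$. Here $E$ and $F$ are taken to lie in $\yoexpsp{X}$ so that $\yobrasp{E}{r}$ and $\yobrasp{F}{r}$ are defined (and finite) in the sense of the paper.

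For the equivalence, the main tool is Lemma \ref{lem:ultraopcl}\ref{item:18:ball1}: if $b\in \yocball(a, r)$, then $\yocball(a, r)=\yocball(b, r)$. In the forward direction, $\yoexpdis{d}(E, F)\le r$ supplies, for every $a\in E$, some $b\in F$ with $d(a,b)\le r$; the lemma upgrades this to $\yocball(a, r)=\yocball(b, r)\in \yobrasp{F}{r}$, and by symmetry $\yobrasp{E}{r}=\yobrasp{F}{r}$. Conversely, if $\yobrasp{E}{r}=\yobrasp{F}{r}$, then each $a\in E$ has $\yocball(a, r)=\yocball(b, r)$ for some $b\in F$, so $b\in \yocball(a, r)$ and $d(a, F)\le r$, and symmetry finishes the argument. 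The case $r=0$ is handled by the definition $\yobrasp{E}{0}=\{\,\{a\}\mid a\in E\,\}$, which equals $\yobrasp{F}{0}$ exactly when $E=F$, i.e.\ when $\yoexpdis{d}(E, F)=0$.

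To conclude, set $s:=\yoexpdis{d}(E, F)$. Since $E$ and $F$ are compact and $a\mapsto d(a, F)$, $b\mapsto d(b, E)$ are continuous, the two suprema in the definition of $\yoexpdis{d}$ are attained; in particular $\yoexpdis{d}(E, F)\le s$, and the equivalence then gives $\yobrasp{E}{s}=\yobrasp{F}{s}$. For any $r<s$, the definition of $\yoexpdis{d}$ produces either $a\in E$ with $d(a, F)>r$ or a symmetric $b\in F$; in the first case no point of $F$ lies in $\yocball(a, r)$, so $\yocball(a, r)\in \yobrasp{E}{r}\setminus \yobrasp{F}{r}$, and the contrapositive of the equivalence yields $\yobrasp{E}{r}\neq \yobrasp{F}{r}$. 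Hence $s$ is the minimum. I do not anticipate a substantive obstacle; the only point requiring attention is the attainment of the suprema, which is exactly where compactness is used.
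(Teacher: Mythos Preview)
Your argument is correct and is in fact more direct than the paper's. The paper sets $u=\inf\{r\in R:\yobrasp{E}{r}=\yobrasp{F}{r}\}$, first shows $u=\yoexpdis{d}(E,F)$ by two inequalities, and then spends the bulk of the proof establishing that this infimum is attained and lies in $R$: it invokes the fact that $S=d(E^{2})\cup d(F^{2})$ is tenuous, so that the ball families $\yobrasp{E}{r}$ are locally constant in $r$, and pushes the witnessing radius down through gaps of $S$. Your equivalence $\yoexpdis{d}(E,F)\le r\iff\yobrasp{E}{r}=\yobrasp{F}{r}$ short-circuits all of this: plugging in $r=s$ yields $\yobrasp{E}{s}=\yobrasp{F}{s}$ immediately, with no appeal to the order structure of the distance set. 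What your approach buys is economy; what the paper's approach buys is an explicit identification of $\yoexpdis{d}(E,F)$ as a value in the tenuous set $d(E^{2})\cup d(F^{2})$, which it records separately as Corollary~\ref{cor:18:val}.

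Two small points. First, compactness is not needed where you invoke it: the inequality $\yoexpdis{d}(E,F)\le s$ is tautological, and attainment of the \emph{suprema} plays no role in your argument. Compactness is needed earlier, in the forward direction of the equivalence, to pass from $d(a,F)\le r$ to the existence of $b\in F$ with $d(a,b)\le r$ (attainment of the \emph{infimum} defining $d(a,F)$). Second, you establish that $s$ is the minimum over all $r\in[0,\infty)$, but the statement asks for the minimum over $r\in R$; you should add the one line that $s\in R$, which again follows from compactness since $s=d(a_{0},b_{0})$ for suitable $a_{0}\in E$, $b_{0}\in F$ (or $s=0$).
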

%proof
\begin{proof}
Let 
$W$ 
denote the set of all 
$r\in R$ 
such that 
$\yobrasp{E}{r}=\yobrasp{F}{r}$. 
Put 
$u=\inf W$. 
First we shall prove that 
$u$ 
is equal to 
the 
Hausdorff distance between 
$E$ 
and 
$F$. 
For every 
$r\in W$, 
the equality 
$\yobrasp{E}{r}=\yobrasp{F}{r}$ 
is true. 
This result means that 
$\{\, \yocball(a, r)\mid a\in E\, \}=
\{\, \yocball(b, r)\mid b\in F\, \}$. 
This equality 
yields 
$E\yosub \bigcup_{b\in F}\yocball(b, r)$ 
and 
$F\yosub \bigcup_{a\in E}\yocball(a, r)$. 
Thus 
Lemma 
\ref{lem:18:folklore1} 
implies that 
$\yoexpdis{d}(A, B)\le u$. 
To obtain the opposite inequality, 
take an arbitrary number  
$l$ 
with  
$\yoexpdis{d}(A, B)<l$. 
Then 
$E\yosub \bigcup_{b\in F}\yocball(b, l)$
 and 
$F\yosub \bigcup_{a\in E}\yocball(a, l)$. 
To show 
$\yobrasp{E}{l}\yosub \yobrasp{F}{l}$, 
take 
$\yocball(s, l)\in \yobrasp{E}{l}$. 
From
 $E\yosub \bigcup_{b\in F}\yocball(b, l)$, 
 it follows that 
 there exists $b \in F$ such that 
 $s\in \yocball(b, l)$. 
Thus 
the statement
 \ref{item:18:ball2} 
 in Lemma 
\ref{lem:ultraopcl} 
yields
$\yocball(s, l)=\yocball(b, l)$. 
Namely, 
$\yocball(s, l)\in \yobrasp{F}{l}$, 
and hence 
$\yobrasp{E}{l}\yosub \yobrasp{F}{l}$. 
Replacing the role of 
$\yobrasp{E}{l}$
with that of 
$\yobrasp{F}{l}$, 
we also obtain 
$\yobrasp{F}{l}\yosub \yobrasp{E}{l}$.
As a result, we have 
$\yobrasp{E}{l}=\yobrasp{F}{l}$. 
Therefore  
$u\le \yoexpdis{d}(E, F)$, 
and we then obtain 
$u=\yoexpdis{d}(E, F)$.

Next we will show that 
$u=\min W$. 
If 
$u=0$, 
then 
$u\in R$
 and
$E=F$. 
Thus 
$\yobrasp{E}{0}=\yobrasp{F}{0}$. 
Namely, 
$u=\min W$. 
In what follows, 
we may assume that 
$u>0$.

Now we show that 
$u\in R$. 
Put 
$S=d(E^{2})\cup d(F^{2})$. 
Then 
$S$ 
is tenuous
(see 
\cite[Corollary 2.28]{Ishiki2023const})
and 
$S\yosub R$. 
To obtain a contradiction, 
suppose that 
$u\not \in S$. 
Then, 
since
 $S$ is tenuous,  
 we can take 
$s, t \in S\setminus \{0\}$
 such that 
$u\in (s, t)$ 
and 
$(s, t)\cap S=\emptyset$. 
In this setting, 
we obtain the following statement: 
\begin{enumerate}[label=\textup{(B)}]
\item\label{item:18:balleq}
For every
$x\in E\cup F$
and for every 
$r\in (s, t)$, 
we have 
$\yocball(x, r)=\yocball(x, s)$. 
\end{enumerate}
Since 
$u=\inf W$, 
there exists 
$v\in W$ 
such that 
$v\in [u, t)$. 
Due to 
$v\in W$, 
we have 
$\yobrasp{E}{v}=\yobrasp{F}{v}$. 
This equality and the statement 
\ref{item:18:balleq}
show that 
$\yobrasp{E}{s}=\yobrasp{F}{s}$, 
and hence 
$s\in W$, 
which is a contradiction to 
$u=\inf W$ 
and 
$s<u$. 
Therefore 
$u\in S$, 
and hence 
$u\in R$.

To verify  that 
$u$ 
attains the  minimum, 
we will show  
$u\in W$. 
Take a sequence
 $\{r_{i}\}_{i\in \mm}$ 
 in 
 $W$ 
 such that 
 $u\le r_{i}$ 
 for all 
 $i\in \mm$, 
and 
$\lim_{i\to \infty}r_{i}=u$. 
If 
$r_{i}=u$ 
for some 
$i\in \mm$, 
we have 
$u\in W$. 
We may assume that 
$r_{i}\neq u$ 
for any 
$i\in \mm$. 
Since 
$S$ 
is tenuous and 
$u\in S\setminus \{0\}$, 
we can take 
$q\in S$ 
such that 
$(u, q)\cap S=\emptyset$. 
For a sufficiently large 
$i\in \mm$, 
the value 
$r_{i}$ 
belongs to 
$(u, q)$. 
Using the same argument as the proof 
of 
 $u\in R$, 
similarly to the statement 
\ref{item:18:balleq}, 
we notice that every 
$x\in E\cup F$ 
satisfies 
$\yocball(x, r_{i})=\yocball(x, u)$. 
Consequently, 
we obtain 
$\yobrasp{E}{u}=\yobrasp{F}{u}$, 
which means that
$u\in W$. 
As a result, we conclude that 
$u$ 
is minimal in 
$W$. 
This completes the proof. 
\end{proof}

As a consequence of 
Proposition 
\ref{prop:18:haus}, 
we obtain the following
 well-known statement. 
%statebody
\begin{cor}\label{cor:18:val}
Let 
$R$ 
be a range set and 
$(X, d)$ 
be an ultrametric. 
The the following
 two statements are true:
\begin{enumerate}[label=\textup{(\arabic*)}]

\item\label{item:18:ult}
The Hausdorff metric 
$\yoexpdis{d}$ 
is an 
ultrametric on 
$\yoexpsp{X}$.

\item\label{item:18:rval}
If 
$(X, d)$ 
is 
$R$-valued, 
then so is 
$(\yoexpsp{X}, \yoexpdis{d})$.
\end{enumerate}
\end{cor}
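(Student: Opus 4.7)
The plan is to derive both statements as corollaries of Proposition \ref{prop:18:haus}, which expresses $\yoexpdis{d}(E, F)$ as the minimum $r \in R$ with $\yobrasp{E}{r} = \yobrasp{F}{r}$. Statement \ref{item:18:rval} is essentially immediate: if $(X, d)$ is $R$-valued, then Proposition \ref{prop:18:haus} describes $\yoexpdis{d}(E, F)$ as an element of $R$ for any $E, F \in \yoexpsp{X}$, so the Hausdorff metric is $R$-valued.

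For the strong triangle inequality in \ref{item:18:ult}, I would take arbitrary $E, F, G \in \yoexpsp{X}$ and set $r = \yoexpdis{d}(E, F)$, $s = \yoexpdis{d}(F, G)$, and $t = r \lor s$. By Proposition \ref{prop:18:haus}, the equalities $\yobrasp{E}{r} = \yobrasp{F}{r}$ and $\yobrasp{F}{s} = \yobrasp{G}{s}$ hold. The main step is to propagate these equalities up to the common radius $t$: for any $a \in E$, there exists $b \in F$ with $\yocball(a, r) = \yocball(b, r)$, so $d(a, b) \le r \le t$, and Lemma \ref{lem:ultraopcl}\ref{item:18:ball1} then gives $\yocball(a, t) = \yocball(b, t)$. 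Taking this over all $a \in E$ and using the symmetric argument yields $\yobrasp{E}{t} = \yobrasp{F}{t}$, and similarly $\yobrasp{F}{t} = \yobrasp{G}{t}$. Consequently $\yobrasp{E}{t} = \yobrasp{G}{t}$, and applying Proposition \ref{prop:18:haus} once more gives $\yoexpdis{d}(E, G) \le t = \yoexpdis{d}(E, F) \lor \yoexpdis{d}(F, G)$.

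The main obstacle, such as it is, lies in the monotonicity step: turning the equality $\yobrasp{E}{r} = \yobrasp{F}{r}$ at radius $r$ into the analogous equality at a larger radius $t$. This is where the ultrametric ball structure from Lemma \ref{lem:ultraopcl} is essential, since in a general metric space enlarging the radius can split or merge balls in an uncontrolled way. Once this monotonicity is in hand, the rest of the argument is bookkeeping, and the symmetry and positive-definiteness axioms for $\yoexpdis{d}$ are standard properties of the Hausdorff distance that require no adjustment.
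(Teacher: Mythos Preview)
Your argument is correct and follows the paper's intended route: the paper states the corollary as an immediate consequence of Proposition~\ref{prop:18:haus} without further detail, and your proof simply fills in the steps, most notably the monotonicity lemma that $\yobrasp{E}{r}=\yobrasp{F}{r}$ implies $\yobrasp{E}{t}=\yobrasp{F}{t}$ for $t\ge r$ via Lemma~\ref{lem:ultraopcl}\ref{item:18:ball1}. Nothing is missing.
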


%statebody
\begin{prop}\label{prop:18:powpow}
Let 
$R$ 
be  a range set, 
and 
$(X, d)$ 
be an  
$(R, \yoomega)$-haloed 
ultrametric space. 
Take 
$m\in \zz_{\ge 2}\sqcup \{\infty\}$, 
and 
$l\in (0, \infty]$. 
Then 
 the space 
$(\yoexpspml{X}{m}{l}, \yoexpdis{d})$ 
is 
$(R, \yoomega)$-haloed. 
\end{prop}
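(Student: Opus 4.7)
To prove the proposition, fix $E \in \yoexpspml{X}{m}{l}$ and $r \in R \setminus \{0\}$, and pick any $a \in E$. The plan is to produce a countably infinite family $\{F_j\}_{j \in \mm}$ in $\yoexpspml{X}{m}{l}$ that is $r$-equidistant and satisfies $\yoexpdis{d}(F_j, E) \le r$, by swapping out the part of $E$ that lies close to $a$.

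The first step is to introduce the equivalence relation $\sim$ on $X$ by declaring $x \sim y$ if and only if $d(x, y) < r$; the strong triangle inequality (together with Lemma \ref{lem:ultraopcl}) shows that its classes are the open balls of radius $r$, which are clopen in $X$. Let $U$ denote the class of $a$. The $(R, \yoomega)$-haloed hypothesis applied to $(a, r)$ yields an infinite $r$-equidistant subset of $\yocball(a, r)$, whose points lie in pairwise distinct $\sim$-classes, so $\yocball(a, r)$ contains infinitely many such classes. Meanwhile, $E \cap \yocball(a, r)$ is compact and is covered by the disjoint clopen classes, so it meets only finitely many of them. I then choose pairwise distinct classes $W_0, W_1, \ldots \yosub \yocball(a, r)$, each disjoint from $E$, together with representatives $q_j \in W_j$, and define $F_j := (E \setminus U) \cup \{q_j\}$.

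The remaining verifications are that (a) $F_j \in \yoexpspml{X}{m}{l}$, (b) $\yoexpdis{d}(F_j, E) \le r$, and (c) $\yoexpdis{d}(F_j, F_i) = r$ for $i \ne j$. For (a), $U$ being clopen makes $E \setminus U$ closed in $E$ and hence compact, so $F_j$ is compact; the cardinality bound $\card(F_j) \le \card(E) \le m$ is immediate; and the diameter bound $\yodiam(F_j) \le \max(\yodiam(E), r)$ given by the strong triangle inequality is at most $l$ when $r \le l$, while when $r > l$ the hypothesis $\yodiam(E) \le l < r$ forces $E \yosub U$, so $F_j = \{q_j\}$ collapses to a singleton of diameter $0$. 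For (b), $q_j \in \yocball(a, r)$ and every $y \in E \cap U$ lies within $r$ of $q_j$ by Lemma \ref{lem:ultraopcl}\ref{item:18:ball1}. For (c), one needs $d(q_j, F_i) = r$: $d(q_j, q_i) = r$ by construction; for $y \in E \setminus U$ inside $\yocball(a, r)$, the points $y$ and $q_j$ lie in distinct $\sim$-classes within $\yocball(a, r)$, forcing $d(q_j, y) = r$; and for $y \in E \setminus \yocball(a, r)$, Lemma \ref{lem:ultraopcl}\ref{item:18:ball0} gives $d(q_j, y) = d(a, y) > r$.

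The main obstacle is the diameter constraint when $r > l$: a naive point-swap inside $E$ could easily produce sets of diameter larger than $l$, which would fail to belong to $\yoexpspml{X}{m}{l}$. The unified definition $F_j = (E \setminus U) \cup \{q_j\}$ sidesteps this, since in that regime $E \yosub U$ makes $E \setminus U$ empty and the construction degenerates automatically to a family of pairwise $r$-distant singletons, which trivially satisfies all three constraints.
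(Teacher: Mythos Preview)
Your proof is correct and follows the same core strategy as the paper: replace the portion of $E$ near a chosen point with a single point drawn from an $r$-equidistant family, and observe that the resulting sets are pairwise at Hausdorff distance $r$ and within $r$ of $E$.

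The paper's version differs in two minor technical choices. First, when $m=\infty$ it approximates the given compact set by a finite subset $E$ with $\yoexpdis{d}(E,A)\le r$, so that closedness of the modified sets is automatic; you instead work directly with the original compact $E$ and use that the open ball $U$ is clopen. Second, the paper removes the entire closed ball $\yocball(a,r)$ from $E$ and lets $q$ range over the given equidistant set $H$ itself, whereas you remove only the open ball $U$ and must first argue (via compactness) that $E$ meets only finitely many $\sim$-classes in $\yocball(a,r)$, so that infinitely many classes disjoint from $E$ remain from which to pick the $q_j$. Your approach is equally valid and handles the cases $m<\infty$ and $m=\infty$ uniformly, at the cost of the extra class-counting step; the paper's finite reduction makes that step unnecessary but introduces the preliminary approximation.
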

%proof
\begin{proof}
Take 
$A\in \yoexpspml{X}{m}{l}$ 
and 
$r\in R$.  
In the case of 
$m<\infty$, 
put 
$E=A$. 
In the case of
 $m=\infty$, 
take  a finite set 
$E$
with 
$\yoexpdis{d}(E, A)\le r$. 
Fix  
$x\in E$, 
and 
take a countable  
$r$-equidistant 
subset 
$H$ 
of 
$\yocball(x, r; d)$. 
We now construct 
an 
$r$-equidistant subset of 
$\yocball(A, r; \yoexpdis{d})$.

For each 
$q\in H$, 
put 
$F_{q}=(E\setminus \yocball(x, r; d))\cup \{q\}$. 
Note that if 
$l<r$, 
then 
$F_{q}=\{q\}$. 
Since 
$E$ 
is a finite set, 
each 
$F_{q}$ 
is closed in 
$X$. 
We also notice that 
$\card(F_{q})\le m$ 
and 
$\yodiam(F_{q})\le l$ 
for all 
$q\in H$.
Namely, 
for every 
$q\in H$ 
we have
$F_{q}\in \yoexpspml{X}{m}{l}$. 
By the fact that 
$H$ 
is 
$r$-equidistant,  
and 
by the definition of 
$F_{q}$, 
we obtain 
$\yoexpdis{d}(F_{q}, F_{q^{\prime}})=r$
for all distinct 
$q, q^{\prime}\in H$. 
Proposition 
\ref{prop:18:haus} 
shows that 
 $\yoexpdis{d}(F_{q}, E)\le r$, 
 and hence  
 $\yoexpdis{d}(F_{q}, A)\le r$
 for all 
 $q\in H$. 
Thus
$\{\, F_{q}\mid q\in H\}$ 
is a
countable 
$r$-equidistant 
subset of 
$\yocball(A, r; \yoexpdis{d})$. 
This proves the proposition. 
\end{proof}

\begin{rmk}
As a sophisticated version of 
Proposition
 \ref{prop:18:powpow},
we can prove the following statement:
Let 
$n\in \mm$, 
$m\in\zz_{\ge 2}\sqcup\{\infty\}$, 
and 
$R$ 
be a range set. 
If 
$(X, d)$ is 
$\youfin(R, n)$-injective, 
then 
the space 
$(\yoexpspml{X}{m}{\infty}, \yoexpdis{d})$ 
is 
$\youfin(R,  n+1)$-injective. 
Based on this statement
and 
Theorem 
\ref{thm:18:chara},  
the operations of 
 the hyperspace and symmetric products  can 
be considered as 
a 
Kat\v{e}tov functor in  the 
category of ultrametric spaces
(for  Kat\v{e}tov functors in a
category of ordinary metric spaces,  
see \cite{MR3669173}). 
\end{rmk}

We now clarify the 
metric structure of the 
hyperspace of the 
$R$-Urysohn universal 
ultrametric space in the case where 
$R$ 
is finite or countable. 
%statebody
\begin{thm}\label{thm:18:countable}
If a range set 
$R$ 
is finite or  countable and 
$(X, d)$ 
is 
the 
$R$-Urysohn universal ultrametric space, then 
for every  
$m\in \zz_{\ge 2}\sqcup\{\infty\}$ 
and 
for every 
$l\in (0, \infty]$
the space
$(\yoexpspml{X}{m}{l}, \yoexpdis{d})$ 
is 
the 
$R$-Urysohn universal ultrametric space. 
In particular, 
it is isometric to 
$(X, d)$. 
\end{thm}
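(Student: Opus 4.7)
The plan is to verify the four defining properties of the $R$-Urysohn universal ultrametric space (separability, completeness, $R$-valuedness, and $\youfin(R,\yoomega)$-injectivity) for $(\yoexpspml{X}{m}{l}, \yoexpdis{d})$, using the machinery already developed in the paper, and then invoke uniqueness.

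First I would note that since $R$ is finite or countable, $(X,d)$ is by Definition \ref{df:18:origin} a separable complete $\youfin(R,\yoomega)$-injective $R$-valued ultrametric space. The separability and completeness of $(\yoexpspml{X}{m}{l}, \yoexpdis{d})$ are then immediate from Lemma \ref{lem:18:folklore2}. The $R$-valuedness follows from Corollary \ref{cor:18:val}\ref{item:18:rval}, and the fact that $\yoexpdis{d}$ is itself an ultrametric is Corollary \ref{cor:18:val}\ref{item:18:ult}, so $(\yoexpspml{X}{m}{l}, \yoexpdis{d})$ is indeed an $R$-valued ultrametric space.

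The central step is the $\youfin(R,\yoomega)$-injectivity. Here I would use the characterization provided by Theorem \ref{thm:18:chara}: since $(X,d)$ is $\youfin(R,\yoomega)$-injective, the equivalence \ref{item:18:main23}$\Leftrightarrow$\ref{item:18:main21} yields that $(X,d)$ is $(R,\yoomega)$-haloed. Proposition \ref{prop:18:powpow} then transfers this property to the hyperspace, giving that $(\yoexpspml{X}{m}{l}, \yoexpdis{d})$ is $(R,\yoomega)$-haloed. Applying the reverse implication in Theorem \ref{thm:18:chara} once more delivers the desired $\youfin(R,\yoomega)$-injectivity.

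Having verified all four defining properties, $(\yoexpspml{X}{m}{l}, \yoexpdis{d})$ is the $R$-Urysohn universal ultrametric space for finite or countable $R$. The "in particular" statement then follows from the uniqueness up to isometry of this space, which is guaranteed by the standard back-and-forth argument (see the discussion after Definition \ref{df:18:origin} and the references to \cite{MR2754373} and \cite{MR1843595}). There is no genuine obstacle in this argument; all the real work has already been done in Section \ref{sec:chara} and in Proposition \ref{prop:18:powpow}, and the proof is essentially an assembly of these results.
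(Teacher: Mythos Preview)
Your proposal is correct and follows essentially the same approach as the paper: both proofs assemble Corollary~\ref{cor:18:val}, Proposition~\ref{prop:18:powpow}, Theorem~\ref{thm:18:chara}, and Lemma~\ref{lem:18:folklore2} to verify the defining properties of the $R$-Urysohn universal ultrametric space, and then appeal to uniqueness. You simply spell out in more detail the logical flow through Theorem~\ref{thm:18:chara} (from injectivity to haloed and back), whereas the paper compresses this into a single sentence.
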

%proof
\begin{proof}
According to 
Corollary 
\ref{cor:18:val}, 
Proposition 
\ref{prop:18:powpow} 
and 
Theorem 
\ref{thm:18:chara}, 
the hyperspace 
$(\yoexpspml{X}{m}{l}, \yoexpdis{d})$
 is 
an 
$R$-valued
 $\youfin(R, \yoomega)$-injective 
 ultrametric space. 
On account of 
Lemma 
\ref{lem:18:folklore2}, 
we notice  that 
 $(\yoexpspml{X}{m}{l}, \yoexpdis{d})$ 
 is separable and complete. 
Therefore 
$(\yoexpspml{X}{m}{l}, \yoexpdis{d})$ 
is the 
$R$-Urysohn 
universal
ultrametric  space and 
isometric to 
$(X, d)$. 
\end{proof}

For the uncountable case, 
we need the following lemma.

%statebody
\begin{lem}\label{lem:finalkey}
Let 
$R$ 
be an uncountable range set, 
and 
$(X, d)$ 
be the 
$R$-petaloid ultrametric space. 
Then for every 
$S\in \yoofam{R}$, 
and  for every finite subset 
 $A$
  of 
  $X$, 
there exists a subset 
$M$
 of 
 $\yopetal{X}{S}$ 
 such that:
\begin{enumerate}[label=\textup{(\arabic*)}]

\item\label{item:hdmax}
$\yoexpdis{d}(A, M)\le \max_{a\in A}d(a, \yopetal{X}{S})$;

\item\label{item:cardma}
$\card(M)\le \card(A)$;

\item\label{item:diamma}
$\yodiam(M)\le \yodiam(A)$. 
\end{enumerate}
\end{lem}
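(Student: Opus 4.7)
The plan is to apply Lemma \ref{lem:18:p6} to obtain, for each $a\in A$, a closest point $p_{a}\in \yopetal{X}{S}$ with $d(a, p_{a})=d(a, \yopetal{X}{S})$. Setting $r:=\max_{a\in A}d(a, \yopetal{X}{S})$ and $D:=\yodiam(A)$, I would define $M$ by a case split on whether $D\ge r$ or $D<r$; in each branch, the three required properties follow from short computations using the ultra-triangle inequality and the isosceles property (Lemma \ref{lem:ultraopcl}\ref{item:18:ball0}).

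When $D\ge r$, I take $M:=\{\, p_{a}\mid a\in A\, \}$. The cardinality bound \ref{item:cardma} is immediate, and the Hausdorff bound \ref{item:hdmax} follows from $d(a, p_{a})\le r$ applied in both directions. For the diameter bound \ref{item:diamma}, the ultra-triangle inequality gives, for any $a, b\in A$,
\[
d(p_{a}, p_{b})\le \max\{d(p_{a}, a), d(a, b), d(b, p_{b})\}\le \max\{r, D, r\}=D.
\]
When $D<r$, I collapse to a single point: pick $a_{0}\in A$ realizing $d(a_{0}, \yopetal{X}{S})=r$ and set $M:=\{p_{a_{0}}\}$. Properties \ref{item:cardma} and \ref{item:diamma} are then trivial, and for \ref{item:hdmax} the key observation is that every $a\in A$ satisfies $d(a, a_{0})\le D<r=d(a_{0}, p_{a_{0}})$, so Lemma \ref{lem:ultraopcl}\ref{item:18:ball0} yields $d(a, p_{a_{0}})=r$, hence $\yoexpdis{d}(A, M)=r$.

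The main subtlety, and the reason for the case split, is reconciling the Hausdorff bound with the diameter bound. The naive construction $\{\, p_{a}\mid a\in A\, \}$ can have diameter up to $r$, which exceeds $\yodiam(A)$ precisely when $D<r$, while the single-point construction $\{p_{a_{0}}\}$ can violate \ref{item:hdmax} when some other $a\in A$ lies at distance greater than $r$ from $a_{0}$, which happens exactly when $D>r$. The threshold $D=r$ is the natural cutoff that makes both bounds simultaneously achievable, and no deeper structural information about the petaloid space beyond Lemma \ref{lem:18:p6} is required.
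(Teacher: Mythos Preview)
Your proof is correct. Both approaches rest on Lemma~\ref{lem:18:p6} to obtain nearest points in $\yopetal{X}{S}$, but the constructions of $M$ differ.

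The paper builds a single map that is automatically $1$-Lipschitz: for each $a\in A$ it considers the full nearest-point set $E[a]=\{\,y\in\yopetal{X}{S}\mid d(a,y)=l_{a}\,\}$ and chooses $p_{E[a]}$ depending only on this \emph{set}, not on $a$. The point is that whenever $d(a,b)<l_{a}\lor l_{b}$ one has $l_{a}=l_{b}$ and $E[a]=E[b]$, hence $p_{E[a]}=p_{E[b]}$; this forces $d(p_{E[a]},p_{E[b]})\le d(a,b)$ in all cases, so $\yodiam(M)\le\yodiam(A)$ without any case split. This is the retraction argument of Artico--Moresco cited there.

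Your approach trades that structural observation for a dichotomy on $D$ versus $r$. In the regime $D\ge r$ you allow arbitrary nearest points $p_{a}$ and bound the diameter crudely by $\max\{r,D,r\}=D$; in the regime $D<r$ you collapse $M$ to a singleton and use the isosceles law to see that every $a\in A$ sits at distance exactly $r$ from it. This is more elementary---it avoids verifying that $E[a]=E[b]$ in the critical case---at the cost of losing the uniform $1$-Lipschitz retraction, which the paper does not actually need for the application but which is a cleaner statement in its own right.
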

%proof
\begin{proof}
The lemma follows from 
Lemma
 \ref{lem:18:p6} 
 and 
\cite[Theorem 4.6]{artico1981some}
stating that every proximal subset of an 
(generalized) ultrametric space is a 
$1$-Lipschitz 
retract of the whole space. 
For the sake of self-containedness, 
we briefly explain a construction of $M$
based on the proof of 
\cite[Theorem 4.6]{artico1981some}. 
For each
 $a\in A$, 
we put 
$l_{a}=d(a, \yopetal{X}{S})$
 and 
we denote by 
$E[a]$
 the set of all 
 $y\in \yopetal{X}{S}$ 
 such that 
$d(a, y)=l_{a}$. 
For each set 
$E[a]$, 
take a point
 $p_{E[a]}\in E[a]$. 
Notice that if
 $E[a]=E[b]$,
  then 
  $p_{E[a]}=p_{E[b]}$. 
Lemma 
\ref{lem:18:p6} 
implies that 
$E_{a}\neq \emptyset$ 
for any 
$a\in A$. 
Define  
$M=\{\, p_{E[a]}\mid a\in A\, \}$. 
An important part of the construction of 
$M$ 
is 
that we choose 
$p_{E[a]}$  
so that 
it depends  on the set 
$E[a]$ 
rather than the point 
$a$.
Since 
$d(a, p_{E[a]})=d(a, \yopetal{X}{S})$ 
for all 
$a\in A$, 
the property 
\ref{item:hdmax} 
is true. 
The property 
\ref{item:cardma} 
follows 
from the definition of 
$M$. 
Next we verify 
\ref{item:diamma}. 
It is sufficient to prove 
that 
$d(p_{E[a]}, p_{E[b]})\le d(a, b)$ 
for all 
$a, b\in A$. 
If 
$l_{a}\lor l_{b}\le d(a, b)$, 
then 
the strong triangle inequality implies 
$d(p_{E[a]}, p_{E[b]})\le d(a, b)$. 
If 
$d(a, b)<l_{a}\lor l_{a}$, 
then we can prove 
$l_{a}=l_{b}$
and 
 $E[a]=E[b]$. 
 Thus
$p_{E[a]}=p_{E[b]}$. 
In this situation, 
of course, 
we have 
$d(p_{E[a]}, p_{E[b]})\le d(a, b)$. 
\end{proof}

For a metric space 
$(X, d)$ 
and for  
$r\in (0, \infty)$, 
a subset of 
$A$ 
is called an 
\yoemph{$r$-net}
if for all distinct 
$x, y\in A$ 
we have 
$r< d(x, y)$. 
An  
$r$-net is 
\yoemph{maximal} if 
it is maximal with respect to  inclusion 
$\yosub$. 
Let us prove 
Theorem 
\ref{thm:18:hyper}. 
%proof
\begin{proof}[Proof of Theorem \ref{thm:18:hyper}]
The case where 
$R$ 
is finite or  countable is proven in 
Theorem 
 \ref{thm:18:countable}. 
We now treat the case where 
$R$ 
is uncountable. 
Remark that  in this case, 
$(X, d)$ 
is the 
$R$-petaloid space
(see Definition \ref{df:18:origin}). 
Corollary 
\ref{cor:18:val}
 implies that 
$(\yoexpspml{X}{m}{l}, \yoexpdis{d})$
 is 
an 
$R$-valued ultrametric space. 
We define a petal on 
$(\yoexpspml{X}{m}{l}, \yoexpdis{d})$ 
as follows. 
For every   
$S\in \yoofam{R}$, 
we define 
\[
\yopetal{\yoexpspml{X}{m}{l}}{S}
=
\{\, E\in \yoexpspml{X}{m}{l}\mid E\yosub \yopetal{X}{S}\, \}.
\] 
By this definition, 
the property 
\ref{item:pr:cap} 
is satisfied.

Since for every 
$S\in \yoofam{R}$
the space 
$(\yopetal{X}{S}, d)$
 is the 
$S$-Urysohn 
universal ultrametric space
(the property 
\ref{item:pr:sep}  
for 
$(X, d)$), 
according to  
Theorem
 \ref{thm:18:countable}, 
the space 
$(\yopetal{\yoexpspml{X}{m}{l}}{S}, \yoexpdis{d})$
 is 
the $S$-Urysohn universal ultrametric space. 
Namely, 
the property 
\ref{item:pr:sep} 
is true for 
$({\yoexpspml{X}{m}{l}}, \yoexpdis{d})$. 
Due to 
\cite[Proposition 2.8]{Ishiki2023Ury}, 
the 
property 
\ref{item:pr:cup} 
is satisfied.

We now  show that 
the property 
\ref{item:pr:distance} 
is valid 
for 
$(\yoexpspml{X}{m}{l}, \yoexpdis{d})$. 
Take 
$S, T\in \yoofam{R}$, 
 and take 
$E\in \yopetal{\yoexpspml{X}{m}{l}}{T}$. 
Let 
$L$ 
stand for 
the distance  between 
the point 
$E$ 
and 
the set 
$\yopetal{\yoexpspml{X}{m}{l}}{S}$. 
We shall 
prove  that 
$L$ 
belongs to 
$\{0\}\cup (T\setminus S)$. 
We may assume that 
$E\not\in  \yopetal{\yoexpspml{X}{m}{l}}{S}$. 
Then 
$E\yosub \yopetal{X}{T}$, 
$E\not\yosub\yopetal{X}{S}$, 
and 
$L>0$.
We put 
$H=\{\, d(y, \yopetal{X}{S})\mid y\in E\, \}$. 
Using  the property 
\ref{item:pr:distance} 
for 
$(X, d)$, 
we see that  
$H\yosub \{0\}\cup (T\setminus S)$. 
Put 
$h=\max H$ 
and take 
$z\in E$ 
such that 
$d(z, \yopetal{X}{S})=h$.
The existence of 
$h$ 
is guaranteed by 
the fact that 
 $H$  
 is a subset of 
 the  tenuous set 
 $T$.
 Note that 
 $h\in T\setminus S$
 and 
 $h>0$.

From now on, we
shall  confirm that
 $L=h$. 
 To prove 
 $h\le L$, 
for the sake of contradiction, 
suppose that there exists 
$F\in \yopetal{\yoexpspml{X}{m}{l}}{S}$
with 
$\yoexpdis{d}(E, F)<h$. 
Put 
$u=\yoexpdis{d}(E, F)$. 
Then 
Lemma 
\ref{lem:18:folklore1} 
and 
$u<h$ 
imply that 
there exists 
$f\in F$
such that 
$d(z, f)<h$. 
This is a
contradiction to 
$h=\inf_{x\in \yopetal{X}{S}}d(z, x)$. 
Thus, 
 for every
$F\in \yopetal{\yoexpspml{X}{m}{l}}{S}$, 
we have 
$h\le \yoexpdis{d}(E, F)$. 
Hence 
 $h\le L$. 

 Next we show 
 $L=h$. 
Take a maximal 
 finite  
$h$-net 
$Q$ 
of 
$E$ 
such that 
$Q\yosub E$. 
Notice that 
$Q\in \yopetal{\yoexpspml{X}{m}{l}}{S}$. 
Lemma
 \ref{lem:finalkey} enables 
us to take a subset 
$V$ 
of 
$\yopetal{X}{S}$ 
such that 
$\yoexpdis{d}(Q, V)\le h$, 
$\card(V)\le \card(Q)$, 
and 
$\yodiam(V)\le \yodiam(Q)$. 
Since 
$Q\in \yopetal{\yoexpspml{X}{m}{l}}{S}$, 
we notice that 
$V\in \yoexpspml{X}{m}{l}$, 
and hence 
$V\in \yopetal{\yoexpspml{X}{m}{l}}{S}$. 
By the fact that  
$Q$ 
is a maximal 
$h$-net of 
$E$, 
we obtain 
$\yoexpdis{d}(E, Q)\le h$. 
Since 
$\yoexpdis{d}$ 
is an ultrametric 
(see the statement 
\ref{item:18:ult} 
in Corollary 
\ref{cor:18:val}), 
combining 
$\yoexpdis{d}(Q, V)\le h$ 
and 
$\yoexpdis{d}(E, Q)\le h$, 
the strong triangle inequality 
 shows  that 
$\yoexpdis{d}(E, V)\le h$.  
Hence 
$L\le h$. 
By 
$V\in \yopetal{\yoexpspml{X}{m}{l}}{S}$ 
and 
$h\le L$, 
we 
conclude that
 $L=h$. 
Therefore 
$L\in T\setminus S$. 
This finishes the proof of 
Theorem 
\ref{thm:18:hyper}. 
\end{proof}

\begin{rmk}
Theorem
 \ref{thm:18:hyper} 
can be
 considered as a
  non-Archimedean  analogue of 
the 
statement 
 that the Gromov--Hausdorff space  is 
isometric to the quotient metric space of 
hyperspace of the Urysohn universal metric space
(see 
\cite[Exercise (b) in the page 83]{Greenbook1999}, 
\cite[Theorem 3.4]{MR4052556}, 
and 
 \cite{MR4282005}). 
\end{rmk}

\begin{rmk}
Let 
$R$ 
be 
a range set, and 
$(X, d)$ 
be the 
$R$-Urysohn universal ultrametric space. 
There does not seem to exist 
a natural isometry between 
$(X, d)$ 
and 
$(\yoexpsp{X}, \yoexpdis{d})$. 
In the case of 
$R=\{0, 1\}$, 
the space 
$(X, d)$ 
is the countable discrete space and 
all bijections between 
$X$ 
and 
$\yoexpsp{X}$ 
become  isometries between them. 
\end{rmk}

\begin{rmk}
Theorem 
\ref{thm:18:hyper} 
implies  that 
there is an (non-trivial) ultrametric space 
$(X, d)$ 
such that 
$(\yoexpsp{X}, \yoexpdis{d})$ 
is isometric to 
$(X, d)$. 
In contrast, 
in \cite{MR2054513},
 it is shown that for any bounded metric space
$(X, d)$ 
with more than one point,  the space 
$(\mathrm{CL}(X), \yoexpdis{d})$ 
is 
not isometric to 
$(X, d)$, 
where 
$\mathrm{CL}(X)$ 
stands for the 
set of all non-empty  closed bounded subsets of 
$(X, d)$. 
\end{rmk}

%section
\section{Urysohn universal ultrametric  spaces as subsets}\label{sec:existence}
This section is devoted to the proof of  
Theorem 
\ref{thm:18:ram}. 
Throughout this section, 
the symbols
 $R$, 
 $(X, d)$ 
 and 
 $K$ 
 are 
the same objects
as
 in  the statement in Theorem \ref{thm:18:ram}. 
Namely,
$R$ 
is an uncountable range set,  
$(X, d)$ 
is 
a
complete 
$\youfin(R, \yoomega)$-injective 
ultrametric space, 
and 
$K$
is 
a compact  subset  of 
$X$ 
satisfying  
$d(K^{2})\yosub R$.

Before proving the theorem, 
we explain the plan of the proof. 
To obtain  a subspace 
$F$ 
stated in Theorem \ref{thm:18:ram}, 
we shall construct a petal structure
consisting of subsets of
 $X$. 
For this purpose, 
we use the idea of a family tree, 
or 
a  genealogical tree. 
This strategy  is inspired by  the fact that 
every ultrametric space can be seen as 
the end space 
 of a tree.
We fix a point 
$\yoprotobio\in X$ 
and 
we regard 
$\yoprotobio$ 
 as an original ancestor, 
 or a
  protobiont.
For every  
$S\in \yoofam{R}$, 
we also define  
$S$-heirs 
of
 $\yoprotobio$
  by  sequences beginning from 
  $\yoprotobio$, 
which also can be regarded as descendants of
 $\yoprotobio$ 
 passing through 
 $S$.
Considering the completion 
$\yopsp{S}$ 
of 
 the set 
 $\yoxsp{S}$ 
 of all 
 $S$-heirs 
 of 
 $\yoprotobio$, 
 we can observe that 
the family 
$\{\yopsp{S}\}_{S\in \yoofam{R}}$ 
becomes a petal structure contained in 
$X$. 
Then  
the space 
$F=\bigcup_{S\in \yoofam{R}}\yopsp{S}$ 
is 
as desired.

We begin with a concept that is  
 a consequence of Theorem 
 \ref{thm:18:chara}. 
\begin{df}\label{df:18:seeds}
We say that a family 
 $\{\yoseed{a}{r}\}_{a\in X, r\in R\setminus \{0\}}$ 
 of 
 subsets of 
 $X$ 
 is 
 an
 \yoemph{$R$-seed of $X$}
  if 
 the following conditions are satisfied
 for every 
 $a\in X$, 
 and 
 for every 
 $r\in R\setminus \{0\}$:  
\begin{enumerate}
\item 
we have 
$a\in \yoseed{a}{r}$;
\item 
the set 
$\yoseed{a}{r}$ 
is 
$r$-equidistant;
\item 
we have 
$\card(\yoseed{a}{r})=\aleph_{0}$. 
\end{enumerate}
\end{df}
Notice that since 
$(X, d)$ 
is 
$(R, \yoomega)$-haloed
  (see Theorem 
  \ref{thm:18:chara}), 
there
  exists an 
  $R$-seed, 
  and 
  notice 
  that 
each 
$\yoseed{a}{r}$ 
is not necessarily maximal with respect to 
inclusion 
$\yosub$. 
\begin{df}\label{df:18:inheritance}
Fix an 
$R$-seed 
$\{\yoseed{a}{r}\}_{a\in X, r\in R\setminus \{0\}}$ 
and 
a point 
$\yoprotobio\in X$. 
For 
$S\in \yoofam{R}$, 
we say that  a point 
$x\in X$ 
is 
an 
\yoemph{$S$-heir 
of  
$\yoprotobio$} 
if 
there exists sequences
$\{v_{i}\}_{i=0}^{m}$ 
in 
$X$ 
and 
$\{r_{i}\}_{i=0}^{m-1}$ 
in 
$S$ 
such that: 
\begin{enumerate}[label=\textup{(S\arabic*)}]

\item\label{item:18seed0}
$v_{0}=\yoprotobio$;

\item\label{item:18seedend}
$v_{m}=x$; 

\item\label{item:18seedeq}
$v_{i+1}\in \yoseed{v_{i}}{r_{i}}$
 for all 
 $i\in \{0, \dots, m-1\}$; 

\item\label{item:18seedneq}
$v_{i}\neq v_{i+1}$ 
for 
all
 $i\in \{0, \dots, m-1\}$; 

\item\label{item:18seeddec}
if 
$2\le m$, 
then
$r_{i+1}<r_{i}$ for all 
$i\in \{0, \dots, m-2\}$. 
\end{enumerate}
In this case, 
 the pair of 
 $\{v_{i}\}_{i=0}^{m}$ 
 and 
 $\{r\}_{i=0}^{m-1}$ 
 is called 
an  
\yoemph{$S$-inheritance 
of 
$x$ 
from 
$\yoprotobio$ 
with length 
$m$}. 
If 
$m=0$, 
then we consider 
$\{r_{i}\}_{i=0}^{m-1}$ 
as 
 the empty sequence. 
\end{df}

\begin{df}\label{df:18:xpsp}
For an  
$R$-seed 
$\{\yoseed{a}{r}\}_{a\in X, r\in R\setminus \{0\}}$, 
and a point  
$\yoprotobio\in X$, 
we denote by 
$\yoxsp{S}$
the set of  all 
$S$-heirs 
of 
$\yoprotobio$. 
We also denote by 
$\yopsp{S}$
the closure of 
$\yoxsp{S}$ 
in 
$X$. 
\end{df}
Notice that since 
$(X, d)$ 
is complete, 
the
set 
$\yopsp{S}$ 
is isometric to 
the 
completion of 
$\yoxsp{S}$
for all 
$S\in \yoofam{R}$.

\begin{rmk}
The point 
$\yoprotobio$
 is 
the unique 
$S$-heir 
from 
$\yoprotobio$ 
that has 
an 
$S$-inheritance 
with 
length 
$0$ for all 
$S\in \yoofam{R}$. 
Thus, 
we have 
$\yoxsp{\{0\}}=\{\yoprotobio\}$
and 
$\yopsp{\{0\}}=\{\yoprotobio\}$. 
\end{rmk}

In the next proposition, 
we observe that 
inheritances of two points 
determine a distance between these two points. 
%statebody
\begin{prop}\label{prop:18:eqeq}
Let 
$S, T\in \yoofam{R}$, 
and 
$x$ 
and 
$y$ 
be 
$S$-heir 
and 
$T$-heir 
of 
$\yoprotobio$,
respectively.  
Let 
$(\{v_{i}\}_{i=0}^{m}, \{r_{i}\}_{i=0}^{m-1})$
and 
$(\{w_{i}\}_{i=0}^{n}, \{l_{i}\}_{i=0}^{n-1})$
be 
$S$-inheritance 
and 
$T$-inheritance 
of 
$x$ 
and 
$y$, 
respectively. 
In this situation, 
the next three statements are true:
\begin{enumerate}[label=\textup{(\arabic*)}]

\item\label{item:42:k1}
If 
$k\in \mm$ 
satisfies 
 that 
$k\le \min\{m, n\}$ 
and 
$v_{i}=w_{i}$ 
for all 
$i\le k$, 
then 
$r_{i}=l_{i}$ 
for all 
$i< k$.

\item\label{item:42:k2}
If 
$k\in \mm$ 
satisfies that:
\begin{enumerate}[label=\textup{(\alph*)}]

\item 
$v_{i}=w_{i}$ 
for all 
$i\le k$;

\item
$v_{k+1}\neq w_{k+1}$, 
\end{enumerate}
then 
we have 
$\max\{r_{k}, l_{k}\}=d(x, y)$.
\item\label{item:42:k3}
If 
$m<n$ 
and  
$v_{i}=w_{i}$ 
for all 
$i\le m$, 
then we have 
$d(x, y)=l_{m}$. 
\end{enumerate}
\end{prop}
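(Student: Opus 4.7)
My overall approach is to reduce all three statements to a single preliminary observation about how the radius sequence of an inheritance encodes distances inside that inheritance, combined with repeated use of the ``isosceles'' property \ref{item:18:ball0} of Lemma \ref{lem:ultraopcl}.

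The preliminary observation I would establish first is the following claim: for any inheritance $(\{v_i\}_{i=0}^{m}, \{r_i\}_{i=0}^{m-1})$ and any $k$ with $k < m$, one has $d(v_m, v_k) = r_k$. The proof is by downward induction on $k$. The base case $k = m - 1$ uses only that $v_{m-1}, v_m \in \yoseed{v_{m-1}}{r_{m-1}}$ with $v_m \neq v_{m-1}$, together with the $r_{m-1}$-equidistance of the seed. For the step from $k+1$ to $k$, the strict decrease \ref{item:18seeddec} gives $d(v_m, v_{k+1}) = r_{k+1} < r_k = d(v_{k+1}, v_k)$, and \ref{item:18:ball0} then yields $d(v_m, v_k) = r_k$. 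The analogous identity holds inside the $T$-inheritance of $y$.

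With this in hand, statement \ref{item:42:k1} is immediate, since its hypothesis forces $v_{i+1} = w_{i+1}$ for all $i + 1 \le k$, so $r_i = d(v_i, v_{i+1}) = d(w_i, w_{i+1}) = l_i$ for $i < k$. Statement \ref{item:42:k3} is also immediate: under its hypotheses $x = v_m = w_m$, so $d(x, y) = d(w_m, w_n) = l_m$ by the preliminary observation applied to the $T$-inheritance (with $k = m < n$).

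For \ref{item:42:k2} my first task is to evaluate $d(v_{k+1}, w_{k+1})$ and show it equals $\max\{r_k, l_k\}$. If $r_k = l_k$, both points lie in the $r_k$-equidistant seed $\yoseed{v_k}{r_k} = \yoseed{w_k}{l_k}$ and are distinct, giving distance $r_k$. Otherwise I may assume by symmetry $l_k < r_k$; then $d(v_k, w_{k+1}) = l_k < r_k = d(v_k, v_{k+1})$, so \ref{item:18:ball0} forces $d(v_{k+1}, w_{k+1}) = r_k = \max\{r_k, l_k\}$. The preliminary observation now shows that $d(x, v_{k+1})$ is either $0$ (when $m = k+1$) or $r_{k+1} < r_k \le \max\{r_k, l_k\}$, and symmetrically $d(y, w_{k+1}) < \max\{r_k, l_k\}$. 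Two more applications of \ref{item:18:ball0} propagate the value $\max\{r_k, l_k\}$ from $d(v_{k+1}, w_{k+1})$ first to $d(x, w_{k+1})$ and then to $d(x, y)$. I do not expect any serious obstacle; the only care needed is in handling the degenerate length $m = k+1$ (and its analogue $n = k+1$) in item \ref{item:42:k2}, and in making the WLOG symmetrization between $r_k$ and $l_k$ explicit.
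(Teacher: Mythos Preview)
Your argument is correct and follows essentially the same route as the paper: both proofs rest on the strict decrease condition \ref{item:18seeddec} together with repeated use of the isosceles property \ref{item:18:ball0}, and your preliminary observation $d(v_m,v_k)=r_k$ is exactly what the paper establishes inline via the chain $d(v_{k+1},x)\le r_{k+1}\lor\cdots\lor r_{m-1}<r_k$. The only cosmetic difference is that in \ref{item:42:k2} you pivot through $v_{k+1},w_{k+1}$ whereas the paper pivots through $z=v_k=w_k$, but the two computations are equivalent.
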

%proof
\begin{proof}
Since 
$r_{i}=d(v_{i}, v_{i+1})$ 
and 
$l_{i}=d(w_{i}, w_{i+1})$
(see the condition 
\ref{item:18seedeq}
in Definition 
\ref{df:18:inheritance}), 
under the assumption of 
\ref{item:42:k1} 
we see that 
$r_{i}=l_{i}$ 
for all 
$i<k$. 
This finishes the proof of 
\ref{item:42:k1}.

We next prove \ref{item:42:k2}. 
Put 
$z=v_{k}$. 
Then 
$z=w_{k}$ 
by the assumption. 
By the definition of heirs,  
the strong triangle inequality, 
and by the conditions \ref{item:18seedend} 
and  
\ref{item:18seeddec} 
in 
Definition 
\ref{df:18:inheritance}, 
we have 
\begin{align*}
d(v_{k+1}, x)&\le d(v_{k}, v_{k+1})\lor \dots \lor d(v_{m-1}, v_{m})\\
&=\max\{\, r_{i}\mid i\in \{k+1, \dots, m-1\}\, \}<r_{k},
\end{align*}
and hence 
$d(v_{k+1}, x)<r_{k}$. 
Similarly, 
we also have 
$d(w_{k+1}, y)<l_{k}$. 
Since 
$d(v_{k}, v_{k+1})=r_{k}$
and 
$d(w_{k}, w_{k+1})=l_{k}$, 
the statement 
\ref{item:18:ball0} 
in 
Lemma 
\ref{lem:ultraopcl} 
yields
$d(z, x)=d(v_{k}, x)=r_{k}$
 and 
$d(z, y)=d(w_{k}, y)=l_{k}$. 
If 
$r_{k}\neq l_{k}$, 
then 
we obtain 
$d(x, y)=\max\{r_{k}, l_{k}\}$
using the statement 
\ref{item:18:ball0} 
in 
Lemma 
\ref{lem:ultraopcl} 
again. 
If 
$r_{k}=l_{k}$, 
then 
$v_{k+1}, w_{k+1}\in \yoseed{z}{r_{k}}$
(see 
\ref{item:18seedeq} in 
Definition 
\ref{df:18:inheritance}). 
Thus, 
due to 
$v_{k+1}\neq w_{k+1}$, 
we have  
$d(v_{k+1}, w_{k+1})=r_{k}(=l_{k})$. 
Using the statement 
\ref{item:18:ball0} 
in 
Lemma 
\ref{lem:ultraopcl} 
once  again, 
and by 
$d(v_{k+1}, x)<r_{k}$ 
and 
$d(w_{k+1}, y)<r_{k}$, 
we obtain 
$d(x, y)=r_{k}(=l_{k})$.
This proves the statement 
\ref{item:42:k2}.

Under the assumption of 
\ref{item:42:k3}, 
we obtain    
$d(w_{m}, w_{m+1})=l_{m}$
 and 
$d(w_{m+1}, y)<l_{m}$.
The condition 
\ref{item:18seedend} 
states that 
$w_{m}=x$. 
Thus, 
the statement 
\ref{item:18:ball0} 
in 
Lemma 
\ref{lem:ultraopcl}
yields
$d(x, y)=d(w_{m}, y)=l_{m}$. 
The proof is finished. 
\end{proof}

\begin{rmk}\label{rmk:18:unique}
Let 
$S\in \yoofam{R}$
 and 
fix 
$x\in \yoxsp{S}$. 
Proposition 
\ref{prop:18:eqeq} 
implies 
the uniqueness of 
an  
$S$-inheritance 
$(\{v_{i}\}_{i=0}^{m}, \{r_{i}\}_{i=0}^{m-1})$
of 
$x$
 from 
 $\yoprotobio$. 
\end{rmk}

%statebody
\begin{lem}\label{lem:18:Acountable}
For every 
$S\in \yoofam{R}$, 
the set 
 $\yoxsp{S}$ 
 is countable.
\end{lem}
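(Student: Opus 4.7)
The plan is to stratify the set $\yoxsp{S}$ by the length of an inheritance and show that each stratum is countable.

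First I would observe that $S$ is countable: since $S \in \yoofam{R}$ is tenuous, it is finite or semi-sporadic, so as a set it is at most countably infinite. Next, I would note that every $S$-heir of $\yoprotobio$ admits an $S$-inheritance of some finite length $m \in \mm$ (by Definition \ref{df:18:inheritance}), so it suffices to show that for each fixed $m$, the collection $\yoxsp{S}_m$ of $S$-heirs admitting an inheritance of length exactly $m$ is countable; then $\yoxsp{S} = \bigcup_{m \in \mm} \yoxsp{S}_m$ is a countable union of countable sets.

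To bound $\yoxsp{S}_m$, I would bound the number of pairs $(\{v_i\}_{i=0}^m, \{r_i\}_{i=0}^{m-1})$ satisfying conditions \ref{item:18seed0}--\ref{item:18seeddec}. The number of strictly decreasing sequences $r_0 > r_1 > \cdots > r_{m-1}$ in the countable set $S \setminus \{0\}$ is at most countable, since each such sequence is determined by a finite subset of a countable set. For each fixed choice of $\{r_i\}_{i=0}^{m-1}$, the sequence $\{v_i\}_{i=0}^m$ is built recursively: $v_0 = \yoprotobio$ is forced, and given $v_i$, the next term $v_{i+1}$ ranges over $\yoseed{v_i}{r_i} \setminus \{v_i\}$, which is countable by the definition of an $R$-seed (Definition \ref{df:18:seeds}). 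Hence for each fixed $\{r_i\}$, there are at most $\aleph_0^m = \aleph_0$ admissible sequences $\{v_i\}$, giving at most countably many inheritances of length $m$ altogether. Since the final term $v_m$ of the inheritance is the heir itself, this bounds $\card(\yoxsp{S}_m)$ by $\aleph_0$.

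There is no real obstacle here; the argument is a direct counting exercise, with the only subtle point being the recursive counting of the $v_i$'s. Strictly speaking, one could also appeal to Remark \ref{rmk:18:unique} to say that each $S$-heir corresponds to a unique inheritance, so the map from inheritances to heirs is a bijection rather than merely a surjection, but even without this, surjectivity of the map suffices for the cardinality bound. Taking the union over $m \in \mm$ yields $\card(\yoxsp{S}) \le \aleph_0$, completing the proof.
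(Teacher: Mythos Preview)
Your proposal is correct and follows essentially the same approach as the paper: both stratify $\yoxsp{S}$ by the length of the inheritance and show each stratum is countable using that $S$ and each $\yoseed{a}{r}$ are countable. The only cosmetic difference is that the paper argues by induction on the length (viewing $A_{n+1}$ as a subset of $\bigcup_{a\in A_n,\, r\in S}\{a\}\times\yoseed{a}{r}$), whereas you first fix the decreasing sequence $\{r_i\}$ and then count the $\{v_i\}$; both are the same straightforward counting.
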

%proof
\begin{proof}
For each 
$n\in \mm$, 
let 
$A_{n}$ 
be 
the set of  
$S$-heirs from 
$\yoprotobio$ 
with  length
 $n$. 
Notice that
 $A_{0}=\{\yoprotobio\}$. 
Then 
$\yoxsp{S}=\bigcup_{n\in \mm}A_{n}$. 
Remark  that 
$A_{n+1}$ 
can be 
regarded
 as 
 a
subset of the set 
$\bigcup_{a\in A_{n}, r\in S}\{a\}\times \yoseed{a}{r}$ 
by 
identifying  a point
 $x\in A_{n+1}$
  with 
its  
$S$-inheritance. 
Since 
$S$ 
and each
 $\yoseed{a}{r}$ 
 are countable, 
by induction, 
we see that each 
$A_{n}$ 
is countable. 
Thus 
$\yoxsp{S}$
is countable. 
\end{proof}

We now prove that each 
$(\yopsp{S}, d)$
 is the 
 $S$-Urysohn
  universal ultrametric space. 
%statebody
\begin{lem}\label{lem:18:ESiso}
For every 
 $S\in \yoofam{R}$, 
the space 
$(\yopsp{S}, d)$ 
is 
the 
$S$-Urysohn 
universal ultrametric space. 
\end{lem}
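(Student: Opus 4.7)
The plan is to verify the defining properties from Definition \ref{df:18:origin}. Because every tenuous set is countable, $S$ is finite or countable, so I need to show that $(\yopsp{S}, d)$ is a separable, complete, $S$-valued, $\youfin(S, \yoomega)$-injective ultrametric space.

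Three of these properties are essentially immediate: completeness because $\yopsp{S}$ is closed in the complete space $(X, d)$; separability from Lemma \ref{lem:18:Acountable}, which supplies the countable dense subset $\yoxsp{S}$; and $S$-valuedness from Proposition \ref{prop:18:eqeq}, which guarantees that distances in $\yoxsp{S}$ lie in $S$, together with the fact that every tenuous set is closed in $[0, \infty)$, so the property passes to the closure.

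The core of the argument is to show that $(\yopsp{S}, d)$ is $(S, \yoomega)$-haloed; Theorem \ref{thm:18:chara} will then yield $\youfin(S, \yoomega)$-injectivity. Fix $a \in \yopsp{S}$ and $r \in S \setminus \{0\}$. I would first use density to choose $a' \in \yoxsp{S}$ with $d(a, a') < r$, so that $\yocball(a, r) = \yocball(a', r)$ by the statement \ref{item:18:ball1} in Lemma \ref{lem:ultraopcl}, reducing to the case $a \in \yoxsp{S}$. Let $(\{v_i\}_{i=0}^m, \{r_i\}_{i=0}^{m-1})$ be the $S$-inheritance of $a$, set $r_m := 0$, and let $j$ be the smallest index in $\{0, 1, \dots, m\}$ with $r_j \le r$. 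My candidate countable $r$-equidistant subset of $\yocball(a, r) \cap \yoxsp{S}$ is $\yoseed{v_j}{r}$ itself. Countable infinitude and $r$-equidistance come from the definition of an $R$-seed, and because $d(v_j, a) = r_j$ (an iterated application of the statement \ref{item:18:ball0} in Lemma \ref{lem:ultraopcl}), the strong triangle inequality yields $d(w, a) \le r \lor r_j = r$ for each $w \in \yoseed{v_j}{r}$.

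The main obstacle is verifying that every $w \in \yoseed{v_j}{r}$ actually lies in $\yoxsp{S}$. For $w = v_j$ this is inherited from $a$; for $w \ne v_j$ the natural candidate is the inheritance $(v_0, \dots, v_j, w)$ with distance sequence $(r_0, \dots, r_{j-1}, r)$, and the only non-trivial condition is the strict decrease at the last step, $r_{j-1} > r$, required when $j \ge 1$. This is precisely ensured by the minimality of $j$, and the uniform definition of $j$ (with the convention $r_m := 0$) transparently covers all three geometric configurations of $r$ against the inheritance distances, namely $r > r_0$, $r = r_i$ for some $i$, and $r_{i+1} < r < r_i$ for some $i$.
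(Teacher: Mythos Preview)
Your proof is correct and follows essentially the same strategy as the paper: verify separability, completeness, and $S$-valuedness, then prove $(S,\yoomega)$-haloedness by locating an appropriate vertex $v_j$ along the inheritance of $a$ and using $\yoseed{v_j}{r}$, invoking Theorem~\ref{thm:18:chara} at the end. Your single index $j=\min\{\,i\le m : r_i\le r\,\}$ (with the convention $r_m=0$) is a tidy unification of the paper's three-case split (Case~1: $r_0<r$; Case~2: $r_{k+1}<r<r_k$; Case~3: $r<r_i$ for all $i$), and in fact it cleanly covers the boundary situation $r=r_i$ that the paper's strict-inequality cases do not literally address.
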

%proof
\begin{proof}
Since 
$\yoxsp{S}$ 
is countable
(see Lemma
 \ref{lem:18:Acountable}), 
the space 
$\yopsp{S}$ 
is separable. 
From 
\ref{item:42:k2} 
and 
\ref{item:42:k3} 
in 
Proposition 
\ref{prop:18:eqeq},
it follows that  
 $(\yopsp{S}, d)$ 
 is 
$S$-valued. 
We now  
verify that 
$(\yopsp{S}, d)$ 
is 
$(S, \yoomega)$-haloed.
It suffices to show that for 
every
$a\in \yoxsp{S}$ 
and for 
every
$r\in S\setminus \{0\}$, 
there
 exists an infinite 
$r$-equidistant 
subset of 
$\yocball(a, r)$. 
Let 
$(\{v_{i}\}_{i=0}^{m}, \{r_{i}\}_{i=0}^{m-1})$
be an 
$S$-inheritance 
of 
$a$. 
We divide the proof into three cases.

\yocase{1}{$r_{0}<r$}
In this case, 
the set 
$\yoseed{\yoprotobio}{r}$ is 
contained in 
$\yopsp{S}$. 
Indeed, for every 
$q\in \yoseed{\yoprotobio}{r}\setminus \{\yoprotobio\}$, 
define 
$\{w_{i}\}_{i=0}^{1}$ and $\{l_{i}\}_{i=0}^{0}$
by 
$w_{0}=\yoprotobio$, 
$w_{1}=q$, 
and 
$l_{0}=r$. 
Then 
$(\{w_{i}\}_{i=0}^{1}, \{l_{i}\}_{i=0}^{0})$ is 
an 
$S$-inheritance 
of 
$q$ 
from 
$\yoprotobio$. 
Hence 
$\yoseed{\yoprotobio}{r}\yosub \yopsp{S}$. 
By the statement 
\ref{item:42:k2} in 
Proposition 
\ref{prop:18:eqeq}, 
we obtain 
$\yoseed{\yoprotobio}{r}\yosub \yocball(a, r)$. 
Thus 
the set 
$\yoseed{\yoprotobio}{r}$
 is 
as desired.

\yocase{2}{There exists $k$ such that $r_{k+1}<r<r_{k}$}
To prove 
$\yoseed{v_{k}}{r}\yosub \yopsp{S}$, 
for each 
$q\in \yoseed{v_{k}}{r}\setminus \{v_{k}\}$, 
we define 
$(\{w_{i}\}_{i=0}^{k+1}, \{l_{i}\}_{i=0}^{k})$
by 
\[
w_{i}=
\begin{cases}
v_{i} & \text{if $i\le k$;}\\
q & \text{if $i=k+1$}. 
\end{cases}
\]
and 
\[
l_{i}=
\begin{cases}
r_{i} & \text{if $i\le k$;}\\
r & \text{if $i=k+1$}. 
\end{cases}
\]
Then 
$(\{w_{i}\}_{i=0}^{k+1}, \{l_{i}\}_{i=0}^{k})$ 
is 
an 
$S$-inheritance 
of 
$q$ 
from 
$\yoprotobio$, 
and hence 
$\yoseed{v_{k}}{r}\yosub \yopsp{S}$. 
By the statement 
\ref{item:42:k2}
 in 
Proposition 
\ref{prop:18:eqeq}, 
we obtain 
$\yoseed{v_{k}}{r}\yosub \yocball(a, r)$. 
Thus 
$\yoseed{v_{k}}{r}$ is 
an
infinite 
$r$-equidistant 
subset as
required. 

\yocase{3}{$r<r_{i}$ for all $i\in \{0, \dots, m-1\}$}
In a similar way to the cases explained  above, 
we observe that 
 $\yoseed{a}{r}\yosub \yocball(a, r)$.

Thus, we conclude that 
$(\yopsp{S}, d)$
is 
$(S, \yoomega)$-haloed. 
Due to Theorem 
\ref{thm:18:chara}, 
the space  
$(\yopsp{S}, d)$
 is 
$\youfin(S, \yoomega)$-injective, 
and hence 
it is the 
$S$-Urysohn universal ultrametric space. 
\end{proof}

%statebody
\begin{lem}\label{lem:18:banana}
Let 
$S, T\in \yoofam{R}$. 
Let 
$x$ 
be a 
$T$-heir of $\yoprotobio$, 
and 
$(\{v_{i}\}_{i=0}^{m}, \{r_{i}\}_{i=0}^{m-1})$ 
be 
a 
$T$-inheritance of 
$x$. 
If 
$x\not \in\yopsp{S}$, 
then there exists 
$i\in \{0, \dots, m\}$ 
such that
 $r_{i}\in T\setminus  S$. 
\end{lem}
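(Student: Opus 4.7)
The plan is to argue by contraposition. I would assume that no $r_i$ lies in $T \setminus S$, that is, $r_i \in S$ for every $i \in \{0, \dots, m-1\}$, and deduce that $x \in \yoxsp{S} \subseteq \yopsp{S}$.

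Concretely, I would inspect the pair $(\{v_i\}_{i=0}^{m}, \{r_i\}_{i=0}^{m-1})$ against Definition \ref{df:18:inheritance} with $S$ in place of $T$. Conditions \ref{item:18seed0}--\ref{item:18seeddec} are purely statements about the sequences $\{v_i\}$ and $\{r_i\}$ themselves (initial point $\yoprotobio$, terminal point $x$, membership in the seeds $\yoseed{v_i}{r_i}$, distinctness of consecutive $v_i$, and strict decrease of the $r_i$); they hold for the $T$-inheritance by assumption and therefore persist verbatim. The only place where the range set $T$ actually enters the definition is the requirement that the sequence $\{r_i\}$ lie inside the prescribed range set. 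Hence, under the contrapositive hypothesis that each $r_i \in S$, the same pair qualifies as an $S$-inheritance of $x$ from $\yoprotobio$. This places $x$ in $\yoxsp{S}$, and in particular in its closure $\yopsp{S}$, contradicting $x \notin \yopsp{S}$.

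I do not expect any serious obstacle here: the statement is essentially a tautology once one unwinds Definition \ref{df:18:inheritance} and observes that the list of conditions there splits cleanly into ones involving only $\{v_i\}$, $\{r_i\}$, $\yoprotobio$, $x$, and the fixed $R$-seed, and the one condition that records which range set the $r_i$ belong to. The only thing to be mildly careful about is the index range: the claim is about $i \in \{0, \dots, m\}$ but $r_i$ is only defined for $i \le m-1$, so in the conclusion I would simply produce $i \in \{0, \dots, m-1\}$ with $r_i \in T \setminus S$, which suffices.
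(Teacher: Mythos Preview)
Your proposal is correct and matches the paper's approach exactly: the paper's proof is the single sentence ``If all $r_{i}$ were in $S$, then we would have $x\in \yopsp{S}$,'' and your argument simply spells this out by checking Definition~\ref{df:18:inheritance}. Your observation about the index range is also apt; producing $i\in\{0,\dots,m-1\}$ is indeed what is meant.
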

%proof
\begin{proof}
If all 
$r_{i}$
 were in 
$S$, 
then 
we would have
 $x\in \yopsp{S}$. 
\end{proof}

The following lemma is a 
preparation for 
the proof of 
\ref{item:pr:distance}. 
%statebody
\begin{lem}\label{lem:18:TS}
Let 
$S, T\in \yoofam{R}$, 
and 
$x\in \yopsp{T}$. 
Then 
the value 
$d(x, \yopsp{S})$
 belongs to 
$\{0\}\cup (T\setminus S)$. 
\end{lem}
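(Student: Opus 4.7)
The plan is to handle $x \in \yoxsp{T}$ first and then extend to all of $\yopsp{T}$ by continuity. If $x \in \yopsp{S}$, then $d(x, \yopsp{S}) = 0$ and the conclusion holds, so assume otherwise.

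First I would take $x \in \yoxsp{T} \setminus \yopsp{S}$ with $T$-inheritance $(\{v_i\}_{i=0}^m, \{r_i\}_{i=0}^{m-1})$; the hypothesis $x \not\in \yopsp{S}$ forces $m \geq 1$ since otherwise $x = \yoprotobio \in \yoxsp{S}$. By Lemma \ref{lem:18:banana} some $r_i$ lies in $T \setminus S$; let $j$ be the smallest such index. Because $r_i \in S$ for every $i < j$, the truncation $(\{v_i\}_{i=0}^j, \{r_i\}_{i=0}^{j-1})$ satisfies all the axioms of an $S$-inheritance of $v_j$, so $v_j \in \yoxsp{S} \yosub \yopsp{S}$. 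The strictly decreasing condition \ref{item:18seeddec} combined with the strong triangle inequality gives $d(v_{j+1}, x) < r_j = d(v_j, v_{j+1})$, and Lemma \ref{lem:ultraopcl}\ref{item:18:ball0} then yields $d(v_j, x) = r_j$; in particular $d(x, \yopsp{S}) \leq r_j$.

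The main obstacle is the matching lower bound $r_j \leq d(x, y)$ for every $y \in \yoxsp{S}$ with inheritance $(\{w_i\}_{i=0}^n, \{l_i\}_{i=0}^{n-1})$, which requires a case analysis through Proposition \ref{prop:18:eqeq}. Let $k$ be the largest index with $v_i = w_i$ for all $i \leq k$; statement \ref{item:42:k1} of Proposition \ref{prop:18:eqeq} then forces $r_i = l_i \in S$ for $i < k$, so $k \leq j$ because $r_j \not\in S$. When $k < \min\{m, n\}$ so that $v_{k+1} \neq w_{k+1}$, statement \ref{item:42:k2} gives $d(x,y) = \max\{r_k, l_k\} \geq r_j$, using either $k = j$ directly or the strict decrease $r_k > r_j$ when $k < j$. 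The boundary case $k = m$ is excluded because statement \ref{item:42:k1} applied with $k = m$ forces $l_j = r_j \not\in S$, a contradiction; the remaining boundary case $k = n < m$ forces $n \leq j$ by the same mechanism, and then statement \ref{item:42:k3} yields $d(x,y) = r_n \geq r_j$. Extending the inequality from $\yoxsp{S}$ to $\yopsp{S}$ via density and the $1$-Lipschitz continuity of $d(x, \cdot)$ shows $d(x, \yopsp{S}) = r_j \in T \setminus S$.

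Finally, for an arbitrary $x \in \yopsp{T}$ one selects a sequence $x_k \in \yoxsp{T}$ with $x_k \to x$. Each $d(x_k, \yopsp{S})$ lies in $\{0\} \cup (T \setminus S) \yosub T$ by the case just handled, and these values converge to $d(x, \yopsp{S})$ by Lipschitz continuity. Since $T$ is tenuous, every convergent sequence in $T$ is either eventually constant or tends to $0$; thus the limit $d(x, \yopsp{S})$ itself lies in $\{0\} \cup (T \setminus S)$, which finishes the proof.
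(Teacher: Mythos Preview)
Your argument is correct and matches the paper's proof in its core: both locate the minimal index $j$ with $r_j\in T\setminus S$, exhibit $v_j\in\yoxsp{S}$ at distance $r_j$, and bound $d(x,y)\ge r_j$ for every $y\in\yoxsp{S}$ via Proposition~\ref{prop:18:eqeq}. Your case analysis for the lower bound is in fact more explicit than the paper's, which cites only \ref{item:42:k2} and leaves the prefix case (your $k=n<m$, handled by \ref{item:42:k3}) implicit. The one genuine difference is the passage from $\yoxsp{T}$ to $\yopsp{T}$: you take a sequence $x_k\to x$ and use that $\{0\}\cup(T\setminus S)$ is closed (via tenuousness of $T$), whereas the paper exploits the ultrametric directly---choosing $z\in\yoxsp{T}$ with $d(x,z)<L=d(x,\yopsp{S})$ forces $d(z,\yopsp{S})=L$ immediately by \ref{item:18:ball0}, so no limiting argument is needed. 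Both work; the paper's trick is a one-line reduction, while yours trades that for an easy topological observation about tenuous sets. A small redundancy: once you establish $k\le j<m$, the boundary case $k=m$ is already excluded, so that paragraph could be dropped.
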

%proof
\begin{proof}
We may assume that 
$x\not\in 
\yopsp{S}$. 
Put 
$L=d(x, \yopsp{S})$. 
Then 
$L>0$. 
Take a point 
$z\in \yoxsp{T}$ 
sufficiently close
to 
$x$ 
so that
 $d(z, \yopsp{S})=L$ 
 and 
$z\in \yopsp{T}\setminus \yopsp{S}$. 
Let 
$(\{v_{i}\}_{i=0}^{m}, \{r_{i}\}_{i=0}^{m-1})$ 
be a 
$T$-inheritance 
of 
$z$ 
from 
$\yoprotobio$. 
Since 
$L>0$ 
and 
$\yoprotobio\in \yopsp{S}$,
 we notice that 
$z\neq \yoprotobio$. 
Thus 
$0<m$. 
Let 
$k$ 
be the minimal number such that 
$r_{k}\in T\setminus S$. 
The existence of 
$k$ 
is guaranteed by 
Lemma 
\ref{lem:18:banana}. 
From 
\ref{item:42:k2}
 in Proposition 
 \ref{prop:18:eqeq}, 
it follows that 
every  
$y\in \yoxsp{S}$ 
satisfies 
$d(z, y)\ge r_{k}$. 
If 
$k=0$, 
we define  
$q=\yoprotobio$; 
otherwise 
$q=v_{k-1}$. 
Then 
$q\in \yopsp{S}$ 
and 
the statement 
\ref{item:42:k3} 
in 
Proposition 
\ref{prop:18:eqeq} 
implies that 
$d(z, q)=r_{k}$. 
As a result, 
we have   
$L=r_{k}$. 
Since 
$r_{k}\in T\setminus S$, 
we conclude that 
$L\in T\setminus S$. 
\end{proof}

We now give the proof of Theorem \ref{thm:18:ram}. 
\begin{proof}[Proof of Theorem \ref{thm:18:ram}]
It suffices to show the latter part 
of Theorem 
\ref{thm:18:ram}. 
Choose a point  
$\yoprotobio$ 
such that 
$\yoprotobio \in K$ and 
choose an 
$R$-seed 
$\{\yoseed{a}{r}\}_{a\in X, r\in R\setminus \{0\}}$ 
so  that the following condition is true:
\begin{enumerate}[label=\textup{(M)}]

\item\label{item:18:kcondition}
If 
$a\in K$, 
then for every 
$r\in R\setminus \{0\}$
the set 
$\yoseed{a}{r}\cap K$ 
is a 
maximal 
$r$-equidistant set  of 
$K$. 
\end{enumerate}
This is possible since 
$K$ 
is compact and 
$d(K^{2})\yosub R$. 
Notice that the following statements are true: 
\begin{enumerate}[label=\textup{(\alph*)}]
\item\label{item:18:aaaa}
For every 
$a\in K$ 
and every 
$r\in R$, 
the set 
$\yoseed{a}{r}\cap K$ 
is 
contained in 
$\yocball(a, r)\cap K$; 

\item\label{item:18:bbbb}
For every 
$a\in K$,  
for every 
$r\in R$,  
and 
for every 
$p\in \yocball(a, r)\cap K$, 
there exists 
$w\in \yoseed{a}{r}\cap K$ 
such that 
$d(p, w)<r$. 
\end{enumerate}
Define 
$F=\bigcup_{S\in \yoofam{R}}\yopsp{S}$
and define a petal of 
$F$ 
by 
$\yopetal{F}{S}=\yopsp{S}$. 
By this definition, 
we can 
 confirm that 
the properties 
\ref{item:pr:cup}, 
and 
\ref{item:pr:cap} 
are 
satisfied. 
Using 
Lemma 
\ref{lem:18:ESiso}, 
the property 
\ref{item:pr:sep} 
is 
true. 
Lemma 
\ref{lem:18:TS} 
proves 
the property 
\ref{item:pr:distance}. 
Therefore 
$(F, d)$ 
is 
the 
$R$-petaloid ultrametric space.

Next 
we  prove 
$K\yosub F$. 
Take
an arbitrary point  
$x\in K$. 
If
 $x\in \yoxsp{S}$
for some 
$S\in \yoofam{R}$, 
then 
$x\in F$. 
Thus we may assume that 
$x\not \in \yoxsp{S}$
for any 
$S\in \yoofam{R}$. 
Put 
$T=d(K^{2})$. 
In this setting, 
by recursion, 
due to the assumption 
\ref{item:18:kcondition}
(or the statements
 \ref{item:18:aaaa} 
 and 
 \ref{item:18:bbbb}), 
we can define sequences 
$\{v_{i}\}_{i=0}^{\infty}$ 
in 
$X$ 
and 
$\{r_{i}\}_{i=0}^{\infty}$ 
in 
$T\setminus \{0\}$ 
such that:
\begin{enumerate}

\item 
$v_{0}=\yoprotobio$;

\item 
$v_{i+1}\in \yoseed{v_{i}}{r_{i}}\cap K$
for all 
$i\in \mm$;

\item 
$v_{i}\neq v_{i+1}$ 
for all 
$i\in \mm$;

\item 
we have 
$r_{i+1}<r_{i}$ 
for all 
$i\in \mm$;

\item 
$d(x, v_{i})<r_{i}$
for all 
$i\in \mm$. 
\end{enumerate}
Since 
$T$ 
is tenuous 
(see 
\cite[Corollary 2.28]{Ishiki2023const}), 
we have 
$r_{i}\to 0$ 
as 
$i\to \infty$,  
and hence 
$v_{i}\to x$ 
as 
$i\to \infty$. 
By the definitions, 
for each 
$m\in \mm$, 
the pair 
$(\{v_{i}\}_{i=0}^{m}, \{r_{i}\}_{i=0}^{m-1})$
becomes a
$T$-inheritance 
of 
$v_{m}$ from $\yoprotobio$. 
Subsequently, 
each 
$v_{m}$ 
belongs to 
$\yoxsp{T}$. 
Therefore, 
from the completeness of 
$\yopsp{T}$ 
and 
$v_{i}\to x$ 
as 
$i\to \infty$, 
it follows that 
$x\in \yopsp{T}$. 
Hence 
$K\yosub F$. 
We  complete the proof of 
Theorem 
\ref{thm:18:ram}. 
\end{proof}

\begin{rmk}
For
 $S\in \yoofam{R}$, 
an element 
$x\in \yoxsp{S}$ 
(or 
an 
$S$-inheritance of 
$x$ 
from 
$\yoprotobio$) 
can be translated into 
 a member in 
  $\yomapsco{R}{\yoomega}$
 (see Example 
 \ref{exam:gsp})
  with 
 finite support as follows. 
For every 
$a\in X$ 
and 
every
$r\in R\setminus \{0\}$, 
 fix an  enumeration  
  $\yoseed{a}{r}=\{\, q(a, r, k)\mid k\in\mm\, \}$
  and let 
  $(\{v_{i}\}_{i=0}^{m}, \{r_{i}\}_{i=0}^{m-1})$
  be an 
  $S$-inheritance 
  of 
  $x$ 
  from 
  $\yoprotobio$. 
Then the point 
$x$ 
is corresponding to 
the function 
$f_{x}$ 
in 
$\yomapsco{S}{\yoomega}$ 
defined by 
\[
f_{x}(s)=
\begin{cases}
k & \text{if $s=r_{i}$ for some $i$ and 
$v_{i+1}=q(v_{i}, r_{i} , k)$};\\
0 & \text{otherwise.} 
\end{cases}
\]
\end{rmk}
Note that 
the point 
$\yoprotobio$
 is corresponding to 
 the zero function
 in 
 $\yomapsco{R}{\yoomega}$. 
 
 \begin{rmk}
 Theorem \ref{thm:18:ram} is stil valid even if 
 $K$ is separable and 
 $d(K^{2})$ is tenuous. 
 \end{rmk}

\section{Questions}\label{sec:ques}

In 
\cite{MR611633}, 
for an index set 
$T$, 
Bors\'{\i}k 
and 
Dobo\v{s} 
introduced 
the notion of a metric preserving function 
$F\colon [0, \infty)^{T}\to [0, \infty)$, 
and the product metric 
of a family 
$\{(X_{t}, d_{t})\}_{t\in T}$
associated with 
$F$
(for the precise definition, 
see
 \cite{MR611633}). 
This is a generalization of 
the 
$\ell^{p}$-product 
metric for 
$p\in [1, \infty]$. 
In the case of two variables 
($\card(T)=2$), 
we denote by 
$d_{1}\times_{F}d_{2}$
the their product metric 
associated with 
$F$.

As a counter part of  of
 Theorem 
 \ref{thm:18:product}, 
we prove 
Theorem \ref{thm:18:nonury} stating that 
$(\yourysp\times \yourysp, \yourydis\times_{F}\yourydis)$ 
is not 
$\yofin$-injective. 
By generalizing the product metric, 
we raise the next question. 
\begin{ques}
Is there  
a  
metric preserving function 
$F\colon [0, \infty)^{2}\to [0, \infty)$
 of two variables such that 
$(\yourysp\times \yourysp, \yourydis\times_{F}\yourydis)$ 
is 
$\yofin$-injective?
\end{ques}

As an Archimedean analogue of  
Theorem 
\ref{thm:18:hyper}, 
we ask the following question. 
\begin{ques}\label{ques:hyp}
Is 
$(\yoexpsp{\yourysp}, \yoexpdis{\yourydis})$ 
isometric to 
$(\yourysp, \yourydis)$?
\end{ques}
The author suspects that 
Question 
\ref{ques:hyp} 
is negative.

For a suitable range  subset 
$R\yosub [0, \infty)$, 
we can construct 
a (unique) separable complete  metric space 
$(\yourysp_{R}, \yourydis_{R})$ 
associated with 
$R$
 injective for all finite  metric spaces 
 whose distances belong to 
$R$ 
(see, 
for instance, 
\cite{MR3004464}). 
Note that 
$\yourydis_{R}$ 
takes only values in 
$R$. 
In the case of 
$R=[0, \infty)$, 
the space 
$(\yourysp_{R}, \yourydis_{R})$
is nothing but 
$(\yourysp, \yourydis)$. 
The 
\yoemph{random graph}
(or the 
\yoemph{Rado graph}) 
$G$ is 
a (unique) countable graph with the 
following property. 
\begin{enumerate}[label=\textup{(E)}]
\item\label{item:18:extr}
For all finite vertices 
$u_{1}$, 
\dots, 
$u_{m}$, 
and 
$v_{1}$, 
\dots, 
$v_{n}$ 
of  
$G$, 
there exists a vertex 
$p$ 
of
 $G$ 
 which is 
adjunct
 $u_{1}$, 
 \dots, 
 $u_{m}$, 
  and 
not adjunct to 
$v_{1}$, \dots, $v_{n}$. 
\end{enumerate}
For the  definition using probability and   characterizations, see, for example, 
 \cite{MR1425227}. 
The random graph  is studied in
graph theory, and  model theory. 
Even though
 $G$ 
 is defined 
purely in terms of  
graph theory, 
the graph 
$G$ 
can be 
identified with 
the metric space 
$(\yourysp_{\{0, 1, 2\}}, \yourydis_{\{0, 1, 2\}})$
 by 
declaring that 
$x, y\in \yourysp_{\{0, 1, 2\}}$ 
are adjunct if and only if 
$\yourydis_{\{0, 1, 2\}}(x, y)=1$
(see 
\cite[Exercise 5]{MR2435148}),  
and 
the property 
\ref{item:18:extr} 
is corresponding to 
the injectivity for the class of all finite metric spaces whose 
distances take values in 
$\{0, 1, 2\}$. 
We put 
$(G, h)=(\yourysp_{\{0, 1, 2\}}, \yourydis_{\{0, 1, 2\}})$. 
Based on 
Theorems 
\ref{thm:18:product} 
and 
\ref{thm:18:hyper}, 
it is worth asking whether 
the 
isometry problems of   the 
cartesian product and 
the hyperspace of the random graph 
are true. 
Remark that 
if 
$p\in [1, \infty)$, 
then 
$h\times_{p}h$ 
does not take vales in 
$\{0, 1, 2\}$, 
and hence 
$(G\times G, h\times_{p}h)$ 
is not the random graph. 
In the case of 
$p=\infty$, 
similarly to 
Theorem 
\ref{thm:18:nonury}, 
the space 
$(G\times G, h\times_{p}h)$ 
is not 
isometric to $(G, h)$. 

\begin{ques}
Is there  a  function 
$F\colon [0, \infty)^{2}\to [0, \infty)$ 
of two variables such that 
$(G\times G, h\times_{F}h)$ 
 is 
isometric to 
$(G, h)$?
\end{ques}

\begin{ques}
Is 
$(\yoexpsp{G}, \yoexpdis{h})$ 
isometric to 
$(G, h)$?
\end{ques}

\begin{ac}
The author wishes to express his 
deepest gratitude 
to 
all  members of 
Photonics Control Technology Team (PCTT) in
 RIKEN, 
where
the majority  of the paper were written, 
for their invaluable   supports. 
Special thanks are extended to 
  the 
Principal Investigator of PCTT,
Satoshi Wada for 
the encouragement and support that 
transcended disciplinary boundaries.

This work was partially supported by JSPS 
KAKENHI Grant Number 
JP24KJ0182. 
\end{ac}

%%%%%%%%%%%%%%%%%%%%%%%%%%
%bibliography
%%%%%%%%%%%%%%%%%%%%%%%%%%

%READ!
%myplain is the same to amsplain style. 
%\nocite{*}
\bibliographystyle{myplaindoidoi}
\bibliography{../../../bibtex/UU.bib}

%%%%%%%%%%%%%%%%%%%%%%%%%%%%%%

%%%%%%%%%%%%%%%%%%%%%%%%%%
%%%%%%%%%%%%%%%%%%%%%%%%%%
%%%%%%%%%%%%%%%%%%%%%%%%%%
%%%%%%%%%%%%%%%%%%%%%%%%%%
%%%%%%%%%%%%%%%%%%%%%%%%%%
%%%%%%%%%%%%%%%%%%%%%%%%%%
%%%%%%%%%%%%%%%%%%%%%%%%%%
%%%%%%%%%%%%%%%%%%%%%%%%%%
%%%%%%%%%%%%%%%%%%%%%%%%%%
%%%%%%%%%%%%%%%%%%%%%%%%%%
%%%%%%%%%%%%%%%%%%%%%%%%%%
%end
\end{document}